\documentclass[twocolumn]{autart} 
\usepackage{amsmath,amssymb,color}
\usepackage{graphicx,epsfig,framed}
\usepackage{mathptmx,times} 
\usepackage{epstopdf,textcase} 
\usepackage{soul,multirow,pifont} 
\usepackage{color,alltt}          
\usepackage[numbers,square,sort&compress]{natbib}
\usepackage{enumerate,siunitx}    

\definecolor{forestgreen}{rgb}{0.13, 0.55, 0.13}

\newtheorem{theorem}{Theorem}[section]
\newtheorem{corollary}[theorem]{Corollary}
\newtheorem{lemma}[theorem]{Lemma}

\newtheorem{proposition}[theorem]{Proposition}
\newtheorem{definition}[theorem]{Definition}
\newtheorem{example}[theorem]{Example}

\newtheorem{remark}[theorem]{Remark}
\numberwithin{equation}{section}

\newcommand{\rfb}[1]{\mbox{\rm
   (\ref{#1})}\ifx\undefined\stillediting\else:\fbox{$#1$}\fi}

\newfont{\roma}{cmr10 scaled 1200}

\newcommand{\fline}  {{\mathbb F}}

\newcommand{\rline}  {{\mathbb R}}
\newcommand{\tline}  {{\mathbb T}}

\newcommand{\PPP} {{\mathbf P}}
\newcommand{\SSS} {{\mathbf S}}

\newcommand{\dd}  {{\rm d}\hbox{\hskip 0.5pt}}
\renewcommand{\leq} {\leqslant}
\renewcommand{\geq} {\geqslant}
\newcommand{\Ascr} {{\mathcal A}}

\newcommand{\Dscr} {{\mathcal D}}

\newcommand{\Hscr} {{\mathcal H}}
\newcommand{\Lscr} {{\mathcal L}}
\newcommand{\Mscr} {{\mathcal M}}
\newcommand{\Nscr} {{\mathcal N}}
\newcommand{\Sscr} {{\mathcal S}}
\newcommand{\Uscr} {{\mathcal U}}
\newcommand{\Yscr} {{\mathcal Y}}
\newcommand{\mm}    {{\hbox{\hskip 0.5pt}}}
\newcommand{\m}     {{\hbox{\hskip 1pt}}}
\newcommand{\n}     {{\hbox{\hskip -5pt}}}
\newcommand{\nm}    {{\hbox{\hskip -3pt}}}

\newcommand{\bluff} {{\hbox{\raise 15pt \hbox{\mm}}}}
\newcommand{\bigbluff} {{\hbox{\raise 21pt \hbox{\mm}}}}
\newcommand{\bbigbluff} {{\hbox{\raise 35pt \hbox{\mm}}}}
\newcommand{\sbluff}{{\hbox{\raise 10pt \hbox{\mm}}}}

\renewcommand{\l}    {{\lambda}}

\newcommand{\FORALL} {{\hbox{$\hskip 11mm \forall \;$}}}
\newcommand{\rarrow} {\mathop{\rightarrow}}

\renewcommand{\half} {{\frac{1}{2}}}

\newcommand{\LEloc}[1] {{L^2_{\rm loc}([0,\infty);#1)}}
\newcommand{\NL}{{\rm NL}}
\newcommand{\AB}     {{A\& B}}
\newcommand{\CD}     {{C\& D}}
\newcommand{\imp}    {{\rm imp}}

\renewcommand{\theenumi}{\roman{enumi}}

\makeatletter
\renewcommand{\p@enumii}{}
\makeatother
\newcommand{\RomanNumeralCaps}[1]
    {\MakeUppercase{\romannumeral #1}}
    
\newcommand{\ULax}   {\boldsymbol{{\mathfrak{T}}}}
\newcommand{\GothA}  {\boldsymbol{{\mathfrak{A}}}}
\newcommand{\GothE}  {\boldsymbol{{\mathfrak{E}}}}

\font\bosy=cmbsy10
\def\conc{\hbox{\bosy \char '175}}
\def\ICON{\mathop{\conc}}

\newcommand{\Dom}[1]{\Dscr(#1)}

\newcommand{\loc}{{\rm loc}}
\renewcommand{\Re}{{\rm Re\,}}

\newcommand{\ipd}[2]{\langle #1, #2 \rangle}
\newcommand{\bbm}[1]{\left[\begin{matrix} #1 \end{matrix}\right]}
\newcommand{\sbm}[1]{\left[\begin{smallmatrix} #1
\end{smallmatrix}\right]}

\begin{document}
\begin{frontmatter}

\title{A class of incrementally scattering-passive nonlinear 
       systems\thanksref{footnoteinfo}\vspace{-3mm}}

\thanks[footnoteinfo]{The authors are working in the ITN network
ConFlex. This project is funded by the European Union's Horizon 2020
research and innovation programme under the Marie Sklodowska-Curie 
grant agreement no. 765579.\\
{\rm e-mail: shantanu@tauex.tau.ac.il (SS), gweiss@tauex.tau.ac.il (GW), marius.tucsnak@u-bordeaux.fr (MT).}}

\author[S. Singh]{Shantanu Singh}, 
\author[S. Singh]{George Weiss},            
\author[M. Tucsnak]{Marius Tucsnak}

\address[S. Singh]{School of Electrical Eng., Tel Aviv University, 
                   Ramat Aviv 69978, Israel.}
\address[M. Tucsnak]{Universit\'e de Bordeaux, Bordeaux INP, CNRS, 
                   F-33400 Talence, France.}
         
\begin{keyword}                           
Well-posed linear system, operator semigroup, Lax-Phillips semigroup,
scattering passive system, maximal monotone operator, Crandall-Pazy
theorem.             
\end{keyword} 

\begin{abstract} We investigate a special class of nonlinear infinite
dimensional systems. These are obtained by subtracting a nonlinear
maximal monotone (possibly multi-valued) operator $\Mscr$ from the
semigroup generator of a scattering passive linear system. While the
linear system may have unbounded linear damping (for instance,
boundary damping) which is only densely defined, the
nonlinear damping operator $\Mscr$ is assumed to be defined on the whole state space. We show that this new
class of nonlinear infinite dimensional systems is well-posed and
incrementally scattering passive. Our approach uses the theory of
maximal monotone operators and the Crandall-Pazy theorem about
nonlinear contraction semigroups, which we apply to a Lax-Phillips
type nonlinear semigroup that represents the whole system.
\end{abstract} 
\end{frontmatter}

\section{Introduction} \label{sec1} 
This paper deals with a special class of well-posed nonlinear systems whose state space is a Hilbert space. More precisely, the systems that we study are obtained from well-posed linear systems by adding a nonlinear damping term. We investigate the well-posedness of the resulting nonlinear system. To formulate our problem more clearly, we recall that a well-posed linear system $\Sigma$ with state space $X$, input space $U$ and output space $Y$ ($X,U$ and $Y$ are real Hilbert spaces) can be described as follows: $\Sigma$ has a semigroup generator $A:
\Dscr(A)\rarrow X$, a control operator $B:U\rarrow X_{-1}$ ($X_{-1}$ is
an extrapolation space containing $X$ densely) and an observation
operator $C:\Dscr(A)\rarrow Y$ (the precise meaning of all these
concepts will be recalled briefly in Section \ref{sec2}). There
exists an extension of $C$, denoted by $\overline{C}$ (not necessarily
unique), and an operator $D:U\rarrow Y$ (that depends on $\overline{C}
$) such that for sufficiently smooth input functions $u$ and a dense
subset of initial states $x_0$, the state trajectory of $\Sigma$,
denoted by $x$, and the output function of $\Sigma$, denoted by $y$,
satisfy for all $t\geq 0$ \vspace{-3mm}
\begin{equation} \label{Ax+Bu}
   \left\{\begin{array}{ll}\dot x(t) \m=\m Ax(t) + Bu(t) \m, \\
   y(t) \m=\m \overline{C}x(t) + Du(t) \m,  
   \end{array}\right. \vspace{-2mm}
\end{equation} 
and $x(0)=x_0$. Here, $A$ is an extension of the original semigroup 
generator, it maps from $X$ to $X_{-1}$.\vspace{-1mm} 

The system $\Sigma$ is called {\em scattering passive} if the 
following {\em energy balance inequality} holds for all $\tau>0$: 
\vspace{-1mm}
\begin{equation} \label{energy_balance}
   \|x(\tau)\|^2 + \int_0^\tau \|y(t)\|^2 \dd t \m\leq\m
   \|x(0)   \|^2 + \int_0^\tau \|u(t)\|^2 \dd t \m,\vspace{-2mm}
\end{equation}
for all the solutions of \rfb{Ax+Bu}. 
A short summary of the main facts about the well-posed linear systems,
in particular scattering passive systems will be given in Section
\ref{sec2}. \vspace{-1mm}

Let $X$ be a real Hilbert space. A set-valued operator $\Mscr$ defined
on $\Dscr(\Mscr)\subset X$ whose values are nonempty subsets of $X$ is
called {\em monotone} if $x_1,x_2\in\Dscr(\Mscr)$ and
$z_1\in\Mscr(x_1)$, $z_2\in\Mscr(x_2)$ implies that \vspace{-2mm}
$$\langle z_1-z_2, x_1-x_2\rangle_X \m\geq\m 0 \m.\vspace{-2mm}$$ 
The above operator is called {\em maximal monotone} if it has no
proper monotone extension (in the sense of inclusion of the graphs).
Every monotone operator has maximal monotone extensions. If $\Mscr$ is
maximal monotone, then for every $x\in\Dscr(\Mscr)$, the set
$\Mscr(x)$ is closed and convex and moreover the closure of $\Dscr
(\Mscr)$ is convex. If $\Mscr$ is linear and $\Dscr(\Mscr)=X$, then
$\Mscr$ being monotone is equivalent to $\Mscr$ being bounded and
$\Mscr+ \Mscr^*\geq 0$. For background on (maximal) monotone operators
we refer to \citep{Brezis,Brezis74,Browder68,Rocka,Show}. 
 A set-valued operator $Q$ is called {\em (maximal)
dissipative} if $-Q$ is (maximal) monotone. \vspace{-1mm}

This paper is about nonlinear infinite dimensional systems 
$\Sigma^\Mscr$ described by equations of the form\vspace{-2mm}
\begin{equation} \label{Boris}
   \dot x(t) \m\in\m Ax(t) - \Mscr(x(t)) + Bu(t) \m,\vspace{-2mm}
\end{equation}
\begin{equation} \label{Mirvis}
   y(t) \m=\m \overline{C}x(t) + Du(t) \m,
\end{equation}
where $A,B,\overline{C},D$ determine a scattering passive linear
system as in \rfb{Ax+Bu}, \rfb{energy_balance} and $\Mscr$ is a
maximal monotone (possibly set-valued) operator with
$\Dscr(\Mscr)=X$. A motivating example of such an infinite dimensional
system (representing a vibrating tower with a tuned mass damper) is in
Section \ref{sec3}. Another noteworthy example is presented in our
very recent conference paper \citep{ShWeTu:21}: the boundary controlled
electromagnetic waves (described by Maxwell's equations) in a bounded
domain containing a nonlinear conductor. \vspace{-1mm}

Our main result, stated and proved in Section \ref{sec6}, is that the
equations \rfb{Boris}, \rfb{Mirvis} determine a well-posed and
incrementally scattering passive nonlinear system. It is not obvious
how to define the latter concept and as a matter of fact it is one of
the aims of this paper to give a proper definition of a nonlinear
well-posed system, in Section \ref{sec6}. For this, we need to recall
the concept of Lax-Phillips semigroup of a well-posed linear system
$\Sigma$ and we do this in Section \ref{sec4}. We will then generalize
this concept to define Lax-Phillips type nonlinear semigroups in
Sections \ref{sec5} and \ref{sec6}. Section \ref{sec5} is devoted to
investigate classical and generalized solutions of \rfb{Boris}, while
Section \ref{sec6} deals with the well-posedness of
\rfb{Boris}-\rfb{Mirvis}. The results of Section \ref{sec5} are
reported also in \citep{ShWeTu:21}. \vspace{-1mm}

There are many papers dealing with maximal dissipative
nonlinear perturbations of maximal dissipative linear operators, to
show that the perturbed operator generates a (nonlinear) contraction
semigroup with certain stability properties, see for instance
\cite{Barbu,Brezis2,CranPazy, Marx}. Most references do not
consider inputs and outputs for the perturbed system. An important
novelty of our approach is that we prove the well-posedness of the
closed loop system, with inputs and outputs. Other papers that
consider the closed-loop system with inputs and outputs and use the
theory of monotone operators to prove its well-posedness are
\cite{Hastir_Hans} and \cite{Schmid}. The type of perturbations that they consider and their assumptions are rather different from ours and we were not able to unify our results with theirs i.e., to formulate a theory general enough to cover them all.

Very briefly, a {\em well-posed nonlinear system} $\Sigma^\NL$ is
determined by a strongly continuous (nonlinear) {\em Lax-Phillips
semigroup}. In particular for any input function $u\in\LEloc U$
(the space of $U$-valued functions that are square
integrable on any finite interval) and any initial state $x_0\in X$,
$\Sigma^\NL$ has a unique state trajectory in $C([0,\infty);X)$ and an
output function $y\in\LEloc Y$. Moreover, $x$ and $y$ depend
continuously on $x_0$ and $u$. Such a system $\Sigma^{\NL}$ is {\em
incrementally scattering passive} if its Lax-Phillips semigroup is
contractive. In this case, if $x_{01}$, $x_{02}$ are initial states in
$X$ and $u_1$, $u_2$ are input functions in $\LEloc U$, and we denote
by $x_1$, $x_2$ the corresponding state trajectories of $\Sigma^{\NL}$
and by $y_1$, $y_2$ the corresponding output functions of
$\Sigma^{\NL}$, then for all $\tau\geq 0$, the following inequality is
satisfied (similar to the energy balance inequality
\rfb{energy_balance}): \vspace{-2mm}
$$ \|x_1(\tau)-x_2(\tau)\|^2 + \int_0^\tau \|y_1(t)-y_2(t)\|^2 \dd t
   \ \ \ \ \ \m\vspace{-2mm}$$
\begin{equation} \label{energy_balance_M}
   \m\ \ \ \ \ \ \ \m\leq\m \|x_{01}-x_{02}\|^2 + \int_0^\tau 
   \|u_1(t)-u_2(t)\|^2 \dd t \m.
\end{equation}
In other words, for any $\tau\geq 0$, the (nonlinear) operator
$\Sigma_\tau^\NL$ which maps $(x(0),u)$ to $(x(\tau),y)$ is a
contraction. 

\section{Scattering passive linear systems} \label{sec2} 

We use the standard notations from functional analysis,
such as $\Lscr(X,Z)$ for bounded linear operators from $X$ to $Z$,
$\Dscr(A)$ for the domain of $A$, $\rho (A)$ for the resolvent set of
$A$.\vspace{-2mm}

We recall some background on well--posed linear systems,
following \citep{CuWe:89,Sala87,Staf_book,StWe02,TuWe_survey,Weiss10,
WeStTu01}. Let us denote by $U$ the input space, by $X$ the state
space and by $Y$ the output space of a well-posed linear system \m
$\Sigma$. $U$, $X$ and $Y$ are Hilbert spaces (only in this section,
we allow complex Hilbert spaces). The input and the output functions are
$u\in\LEloc U$ and $y\in\LEloc Y$ respectively. For any $u\in\LEloc U$
and any $\tau\geq 0$, we denote by $\PPP_\tau u$ its truncation to the
interval $[0,\tau]$. $\PPP_\tau u$ is regarded as a function in
$L^2([0,\infty);U)$, which is zero for $t>\tau$. We use the standard
notation $\Hscr^1((0,\infty);U)$ for the space of all $u\in
C([0,\infty);U)\cap L^2([0,\infty);U)$ for which there exists $\phi\in
L^2([0,\infty);U)$ such that \vspace{-2mm}
\begin{equation} \label{Trump_plan}
   u(t)-u(0) \m=\m \int^t_0\phi(\sigma) \dd\sigma \FORALL t\geq 0.
   \vspace{-2mm} 
\end{equation}
The space $\Hscr^1_\loc((0,\infty);U)$ consists of all $u\in C([0,
\infty);U)$ for which $\phi\in\LEloc U$ exists such that 
\rfb{Trump_plan} holds.\vspace{-1mm}

A well-posed linear system $\Sigma$ consists of the family of bounded
operators $\Sigma=(\Sigma_\tau)_{\tau\geq 0}$ such that \vspace{-2mm}
\begin{equation} \label{UNUZERO}
   \bbm{ x(\tau) \\ \PPP_\tau y } \m=\m \Sigma_\tau
   \bbm{ x(   0) \\ \PPP_\tau u } \m.\vspace{-2mm}
\end{equation}
Here $x:[0,\infty)\rarrow X$ is the state trajectory of $\Sigma$
corresponding to the initial state $x(0)$ and the input function $u$,
and $y$ is the corresponding output function. Denoting $c_\tau=\|
\Sigma_\tau \|$, \vspace{-2mm}
$$ \| x(\tau)\|^2 + \int_0^\tau \left\| y(t)\right\|^2 \dd t
   \m\leq\m c_\tau^2 \left( \| x(   0)\|^2 + \int_0^\tau \left\| 
   u(t)\right\|^2 \dd t \right).\nonumber \vspace{-3mm}$$
The operators $\Sigma_\tau$ are partitioned in a natural way
(corresponding to the two product spaces) as follows:\vspace{-2mm}
\begin{equation} \label{Sig4b}
   \Sigma_\tau \m=\m \bbm{\tline_\tau & \Phi_\tau\\
   \Psi_\tau & \fline_\tau} \m.\vspace{-3mm}
\end{equation}
The four families of operators appearing on the right-hand side above,
must satisfy four functional equations expressing the causality and
the time-invariance of \m $\Sigma$ (these functional equations are
parts of the definition of a well-posed system), see Section 2 of
\citep{StWe02}. In particular, the family $(\tline_\tau)_{\tau\geq 0}$
is a strongly continuous operator semigroup on $X$ and its generator
$A$ is called the {\em semigroup generator} of $\Sigma$. We introduce
$X_1=\Dscr(A)$ with the norm defined as $\|x\|_1=\|(\beta I-A)x\|$,
where $\beta\in \rho(A)$. $X_{-1}$ is the completion of $X$ with
respect to the norm $\|x\|_{-1}=\|(\beta I-A)^{-1}x\|$. These spaces
are independent of the choice of $\beta$, see \cite{obs_book}. $A$ has a unique extension
that is bounded from $X$ to $X_{-1}$, and we denote this extension by
the same symbol $A$. The semigroup $\tline$ can be extended to an
operator semigroup on $X_{-1}$, denoted by the same symbol, whose
generator is the extension of $A$ mentioned earlier. There exists a
unique operator $B\in \Lscr(U;X_{-1})$, called the {\em control
operator} of $\Sigma$ such that for all $t\geq 0$,\vspace{-2mm}
$$ \Phi_t u \m = \m \int^t_0 \tline_{t-\sigma} Bu(\sigma)\dd \sigma 
   \quad \forall u\in L^2([0,\infty);U).\vspace{-2mm}$$
The above integration is done in $X_{-1}$. There exists a unique {\em
observation operator} $C\in \Lscr(X_1,Y)$ so that for every
$\tau\geq 0$,\vspace{-2mm}
$$ (\Psi_\tau x_0)(t) \m=\m C\tline_t x_0 \quad \forall x_0\in
   \Dscr(A),\ t\in[0,\tau]. \vspace{-2mm}$$
$\Sigma$ can be fully described by two operators $\AB:\Dscr(\AB)
\rarrow X$, $\CD:\Dscr(\AB)\rarrow Y$ in a sense that will become
clear in Proposition \ref{YomKippur}. Here, \vspace{-2mm}
\begin{equation} \label{D(AB)}
   \Dscr(\AB) \m=\m \left\{\left.\nm\sbm{ x_{_0}\\[1mm] u_{_0}} \in 
   X\times U \ \right|\ Ax_0+Bu_0\in X \right\} \m,\vspace{-2mm}
\end{equation}
\begin{equation} \label{AB}
   [\AB]\sbm{x_0\\[1mm] u_0} \m=\m Ax_0+Bu_0 \m.
\end{equation}
Then $\Dscr(\AB)$ is a dense subspace of $X\times U$ and $\AB$ is
closed. Hence, the space $\Dscr(\AB)$ may be regarded as a Hilbert
space with the graph norm of $\AB$. $\CD$ is bounded from $\Dscr(\AB)$
to $Y$ and \vspace{-2mm}
\begin{equation} \label{CD_admissible}
   Cx \m=\m [\CD] \sbm{x\\[1mm] 0} \FORALL x\in\Dscr(A).\vspace{-2mm}
\end{equation}
In the particular case of a finite dimensional system described by the
equations $\dot x(t)=Ax(t)+Bu(t)$ and $y(t)=Cx(t)+D u(t)$, where
$A,B,C,D$ are matrices of suitable dimensions, we have $\AB=\left[A \
B\right]$, $\CD=\left[C \ D\right]$. The following proposition is
contained in \citep[Theorem 3.1]{StWe02}. \vspace{-1mm}

{\color{blue}
\begin{proposition} \label{YomKippur}
We use the notation introduced earlier in this section. Assume that $u
\in\Hscr^1_\loc((0,\infty);U)$ and $\sbm{x_0\\ u(0)}\in\Dscr(\AB)$. 
The state trajectory $x$ and the output function $y$ of $\Sigma$ are 
defined as in \rfb{UNUZERO}. Then \vspace{-3mm}
$$ x\in C^1([0,\infty);X), \quad \sbm{x\\[1mm] u}\in C([0,\infty);
   \Dscr(\AB)),\vspace{-2mm}$$
$$ y\in \Hscr^1_\loc((0,\infty);Y),$$
and for every $t\geq 0$ we have that\vspace{-2mm}
\begin{equation} \label{Kurds_abandoned}
   \dot{x}(t) \m=\m [\AB]\sbm{x(t)\\[1mm] u(t)}, \quad y(t) \m=\m 
   [\CD] \sbm{x(t)\\[1mm] u(t)}.\vspace{-2mm}
\end{equation}
\end{proposition}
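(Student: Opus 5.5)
The plan is to split the state trajectory $x(t)=\tline_t x_0+\Phi_t u$ into a part where $u$ is replaced by a simpler function, differentiate each piece in $X_{-1}$, and then use the compatibility condition $\sbm{x_0\\ u(0)}\in\Dscr(\AB)$ to land back in $X$. Throughout we may work on a finite interval $[0,\tau]$, by causality. Since $u\in\Hscr^1_\loc$, we write $u(\sigma)=u(0)+\int_0^\sigma \dot u(s)\dd s$ with $\dot u\in L^2([0,\tau];U)$.

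\textbf{Step 1: the state equation.} We first show $x\in C^1([0,\infty);X_{-1})$ with $\dot x=Ax+Bu$ in $X_{-1}$, which is the standard variation-of-constants computation, since $\tline$ extends to a semigroup on $X_{-1}$ with generator $A\in\Lscr(X,X_{-1})$ and $B\in\Lscr(U,X_{-1})$. We then integrate by parts in $\Phi_t u=\int_0^t\tline_{t-\sigma}Bu(\sigma)\dd\sigma$. Choosing $\beta\in\rho(A)$, one obtains
\[ x(t)=\tline_t x_0+\Phi_t u=\tline_t\bigl(x_0-(\beta I-A)^{-1}Bu(0)\bigr)+(\beta I-A)^{-1}Bu(t)+\Phi_t\bigl(\beta u-\dot u\bigr)\cdot(\text{suitably arranged}) . \]
We then differentiate. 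The key identity is
\[ \dot x(t)=\tline_t\bigl(Ax_0+Bu(0)\bigr)+\Phi_t\dot u . \]
This identity holds in $X_{-1}$, as seen by differentiating $\tline_t x_0+\int_0^t\tline_{s}Bu(t-s)\dd s$ in $t$. Because $Ax_0+Bu(0)\in X$ by assumption, and $\Phi_t\dot u\in X$ depends continuously on $t$ by well-posedness, the right-hand side lies in $C([0,\infty);X)$. Hence $x\in C^1([0,\infty);X)$ and $Ax(t)+Bu(t)=\dot x(t)\in X$. This gives $\sbm{x(t)\\ u(t)}\in\Dscr(\AB)$. Continuity in the graph norm of $\AB$ follows from the continuity of $x$, $u$ and $\dot x$ in $X$, $U$ and $X$ respectively.

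\textbf{Step 2: the output.} The identity $y(t)=[\CD]\sbm{x(t)\\ u(t)}$ holds for $x_0\in\Dscr(A)$ and $u=0$ by \rfb{CD_admissible} and the definition of $C$. The harder part is general inputs, and I expect this to be the main obstacle. The reason is that $\CD$ is defined abstractly, so we must connect it to $\fline_\tau$ and $\Psi_\tau$. I would use the definition of $\CD$ (as in \citep{StWe02}), namely
\[ [\CD]\sbm{x_0\\ v}=C\bigl(x_0-(\beta I-A)^{-1}Bv\bigr)+\mathbf G(\beta)v . \]
Here $\mathbf G$ is the transfer function, and this expression is well defined because $x_0-(\beta I-A)^{-1}Bv\in\Dscr(A)$ on $\Dscr(\AB)$. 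We then verify the output identity for constant inputs $u\equiv v$, using the fact that the output of a constant (Heaviside) input with compatible initial state is given by this formula; this follows from the Laplace-transform characterization of $\fline$. Next we extend to $u\in\Hscr^1_\loc$ by superposition, writing $u$ as $u(0)$ plus an integral of step inputs. Finally, we conclude by density and continuity. The map $(x_0,u)\mapsto y$ is continuous from $\{\sbm{x_0\\ u}:\sbm{x_0\\ u(0)}\in\Dscr(\AB),\ u\in\Hscr^1\}$ into $\Hscr^1$. The map $t\mapsto[\CD]\sbm{x(t)\\ u(t)}$ is continuous by Step 1, since $\CD$ is bounded on $\Dscr(\AB)$.

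\textbf{Step 3: regularity of $y$.} The regularity $y\in\Hscr^1_\loc$ follows from time-invariance. The difference quotients $\frac1h(y(\cdot+h)-y)$ are the outputs generated by $\frac1h(x(h)-x_0)$ and $\frac1h(u(\cdot+h)-u)$. These converge in $X\times L^2$ to $\dot x(0)$ and $\dot u$, so by the boundedness of $\Sigma_\tau$ the difference quotients converge in $L^2$. Hence $\dot y=\Psi\dot x(0)+\fline\dot u\in L^2_\loc$.
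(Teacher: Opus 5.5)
The paper does not prove this proposition; it quotes it from Theorem 3.1 of Staffans--Weiss, so your argument has to stand on its own.

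\textbf{Steps 1 and 3 are sound.} The identity $\dot x(t)=\tline_t(Ax_0+Bu(0))+\Phi_t\dot u$ holds first in $X_{-1}$, and then in $X$ because the right-hand side is continuous in $X$. It gives $x\in C^1([0,\infty);X)$ and $\sbm{x\\ u}\in C([0,\infty);\Dscr(\AB))$. Your difference-quotient argument correctly gives $y\in\Hscr^1_\loc((0,\infty);Y)$ with $\dot y=\Psi(Ax_0+Bu(0))+\fline\dot u$.

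\textbf{The gap is in Step 2}, which carries the real content (the output equation). Superposing step inputs cannot transport the pointwise identity.
\begin{itemize}
\item A piece with zero initial state and input $w\mathbf 1_{[r,\infty)}$ is not a compatible pair. For $t>r$ its state satisfies $A\Phi_{t-r}(w\mathbf 1)+Bw=\tline_{t-r}Bw$, which in general is not in $X$. At $t=r$, compatibility would require $Bw\in X$.
\item So $\CD$ cannot even be applied to the pieces, and their outputs need not be continuous. For the delay line at the end of the paper, the step response is $w\mathbf 1_{[r+1,\infty)}$.
\item To pass the identity through $\int_0^t(\cdots)\,\dd r$, you would need $(\fline\tilde u)(t)=\CD\sbm{\Phi_t\tilde u\\ \tilde u(t)}$ for $\tilde u=u-u(0)$. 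Step 1 only tells you the argument of $\CD$ lies in $\Dscr(\AB)$. The required identity is the proposition itself for zero initial state, so the argument is circular.
\item The closing density step has the same problem. You never exhibit a dense set of compatible pairs on which the identity has been proved, and constant inputs alone are not dense.
\end{itemize}

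\textbf{The repair} is to drop the reduction to constant inputs. Your Laplace computation works verbatim for every $u\in\Hscr^1((0,\infty);U)$, and $\Hscr^1_\loc$ reduces to this by causality (multiply $u$ by a smooth cutoff equal to $1$ on $[0,\tau+1]$). Let
\[ w(t)=C\bigl(x(t)-(\beta I-A)^{-1}Bu(t)\bigr)+\mathbf G(\beta)u(t). \]
By Step 1, the argument of $C$ is continuous and exponentially bounded in $X_1$, so $C$ commutes with the Laplace integral. Using $\hat x(s)=(sI-A)^{-1}x_0+(sI-A)^{-1}B\hat u(s)$, you get
\[ \hat w(s)=C(sI-A)^{-1}x_0+C\bigl[(sI-A)^{-1}-(\beta I-A)^{-1}\bigr]B\hat u(s)+\mathbf G(\beta)\hat u(s)=C(sI-A)^{-1}x_0+\mathbf G(s)\hat u(s)=\hat y(s). \]
The second equality is the transfer-function identity $\mathbf G(s)-\mathbf G(\beta)=C[(sI-A)^{-1}-(\beta I-A)^{-1}]B$. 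You must invoke it explicitly; it is also what makes your formula for $\CD$ independent of $\beta$. The relation $\widehat{\fline u}=\mathbf G\hat u$ alone does not give it. Hence $y=w$ a.e. Since $w$ is continuous and $y$ has the continuous representative from Step 3, the identity holds for every $t\geq 0$.

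Alternatively, keep your superposition-plus-density scheme but superpose delayed ramps $(t-r)_+w$ instead of steps.
\begin{itemize}
\item Delayed ramps are compatible with zero initial state.
\item Piecewise linear inputs with the prescribed value $u(0)$ are finite sums of a constant and such ramps, and they are dense in $\Hscr^1(0,\tau)$.
\item Your continuity argument then closes the proof.
\end{itemize}
The ramp case itself still needs the same Laplace computation.
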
}

From the above it can be shown (by density and continuous extension)
that $\Sigma$ is completely determined by $\AB$ and $\CD$.
In particular, for $u\in \Hscr^1_\loc((0,\infty);U)$
with $u(0)=0$ we have $(\fline_\tau u)(t)=[\CD]\sbm{x(t)\\u(t)}$,
where $x(t)=\Phi_t u$. The observation operator of $\Sigma$, $C$ has
an extension $\bar{C}$ on $Z$, where the space $Z$ is defined as:
\vspace{-2mm}
\begin{equation} \label{SpaceZ}
   Z \m=\m \Dscr(A)+(\beta I-A)^{-1}BU.\vspace{-2mm}
\end{equation}
This is a Hilbert space with the norm \vspace{-3mm}
\begin{equation} \label{Chauvin_convicted}
   \|z\|^2_Z \m=\m \inf\left\{\ \nm\| x \|^2_1+\| v \|^2
   \left|\begin{array}{c} x \in X_1,\ \ v \in U,\\ z=x+(\beta I-A
   )^{-1}Bv \end{array}\right.\right\}.\vspace{-1.5mm}
\end{equation}
It must be noted that the extension $\bar{C}$ is not unique. For
each such extension $\bar{C}$, there exists $D\in\Lscr(U,Y)$ such
that \vspace{-2mm}
\begin{equation} \label{CD}
   [\CD]\sbm{x_0\\[1mm] u_0} =\m \bar{C}x_0+Du_0 \quad \forall 
   \sbm{x_0\\[1mm] u_0}\in\Dscr(\AB).\vspace{-4mm}
\end{equation}
 
{\em Scattering passive systems} are a subclass of the well-posed
linear systems, for which \rfb{energy_balance} holds, which is
equivalent to \vspace{-3mm}
\begin{equation} \label{ScaPassive}
   \frac{\dd}{\dd t}\|x(t)\|^2 \m\leq\m \|u(t)\|^2-\|y(t)\|^2,
   \vspace{-1mm}
\end{equation}
where $u,x$ and $y$ are as in Proposition \ref{YomKippur}. Thus,
$\Sigma$ is scattering passive if and only if $\|\Sigma_\tau\|\leq 1$
for all $\tau\geq 0$. \vspace{-1mm}

The following characterization of scattering passive systems has been
derived in \citep[Theorem 6.8]{TuWe_survey}. \vspace{-1mm}

{\color{blue}
\begin{proposition} \label{ScatteringPassive}
Let $A$ be the generator of a strongly continuous semigroup on $X$,
let $B\in \Lscr(U,X_{-1})$, let $\AB$ be defined as in
\rfb{D(AB)}-\rfb{AB}, and let $\CD$ be bounded from $\Dscr(\AB)$ to
$Y$. The operators $\AB$ and $\CD$ determine a scattering passive
well-posed linear system (via \rfb{Kurds_abandoned}) if and only if
\vspace{-2mm}
\begin{equation} \label{ScatteringCond}
   \Re\left\langle \bbm{ A\ \&\ B & 0 \\ \m 0\ -\half I & 0\\ C\ \&\ 
   D & \nm-\half I\m\m} \bbm{x_0\\ u_0\\ y_0}, \bbm{x_0\\ u_0\\ y_0} 
   \right\rangle \m\leq\m 0,
   \vspace{-2mm}
\end{equation}
for all $\left[x_0 \ u_0 \ y_0\right]^\top\in\Dscr(\AB)\times Y$ (the
inner product is computed in $X\times U\times Y)$.\vspace{-1mm}
\end{proposition}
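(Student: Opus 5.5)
First I expand the quadratic form in \rfb{ScatteringCond} to make it explicit. With $\sbm{x_0\\ u_0}\in\Dscr(\AB)$ and $y_0\in Y$, the left-hand side equals
$$ \Re\langle \AB\sbm{x_0\\ u_0},x_0\rangle - \tfrac12\|u_0\|^2 + \Re\langle \CD\sbm{x_0\\ u_0},y_0\rangle - \tfrac12\|y_0\|^2 . $$
For fixed $(x_0,u_0)$ this expression is maximized over $y_0$ at $y_0=\CD\sbm{x_0\\ u_0}$, where the last two terms become $\tfrac12\|\CD\sbm{x_0\\ u_0}\|^2$. Hence \rfb{ScatteringCond} holds if and only if
$$ 2\Re\langle \AB\sbm{x_0\\ u_0},x_0\rangle \leq \|u_0\|^2 - \big\|\CD\sbm{x_0\\ u_0}\big\|^2 \qquad \forall \sbm{x_0\\ u_0}\in\Dscr(\AB), \tag{$*$}$$
which is a pointwise version of \rfb{ScaPassive}. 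The rest of the proof shows that $(*)$ is equivalent to $\AB,\CD$ generating a scattering passive well-posed system.

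\emph{Necessity.} Assume the system is well-posed and scattering passive, and fix $\sbm{x_0\\ u_0}\in\Dscr(\AB)$. I take the constant input $u(t)=u_0$, or a smooth input in $\Hscr^1_\loc$ with $u(0)=u_0$. By Proposition \ref{YomKippur}, $x\in C^1$ and \rfb{Kurds_abandoned} holds, so $\frac{\dd}{\dd t}\|x(t)\|^2=2\Re\langle\dot x(t),x(t)\rangle$. Also $t\mapsto \sbm{x(t)\\ u(t)}$ is continuous into $\Dscr(\AB)$, which makes $y$ continuous. From \rfb{energy_balance} I divide by $\tau$ and let $\tau\to0$; using \rfb{Kurds_abandoned} at $t=0$ this gives $(*)$.

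\emph{Sufficiency.} This is the harder direction. I first take $u_0=0$ in $(*)$ to get $\Re\langle Ax_0,x_0\rangle\le -\tfrac12\|Cx_0\|^2\le 0$, so $A$ is dissipative. Since $A$ generates a $C_0$-semigroup, it has a nonempty resolvent set, and I need maximality: $\lambda-A$ must be onto for some $\lambda>0$. I would obtain this from the fact that $\lambda\in\rho(A)$ for large real $\lambda$ together with Lumer--Phillips, which makes $\tline$ a contraction semigroup. Next I use $(*)$ to bound $\|Cx_0\|^2\le -2\Re\langle Ax_0,x_0\rangle$. Integrating along $\tline_t x_0$ gives $\int_0^\tau\|C\tline_tx_0\|^2\dd t\le\|x_0\|^2$, so $C$ is admissible. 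For smooth $u$ with compatible initial data, I define $x$ as the classical solution of $\dot x=\AB\sbm{x\\ u}$, which exists by the standard theory because $B\in\Lscr(U,X_{-1})$ and $u\in\Hscr^1_\loc$ with a compatible initial state. I then set $y=\CD\sbm{x\\ u}$. Applying $(*)$ pointwise and integrating yields \rfb{energy_balance} on a dense set of data. By density and continuous extension, this gives bounded operators $\Sigma_\tau$ with $\|\Sigma_\tau\|\le 1$; in particular $B$ is admissible and the input-output map is bounded.

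It remains to verify that the extended family satisfies the causality and time-invariance functional equations. These hold on the dense set of classical solutions by uniqueness and shift-invariance, and they pass to the limit by continuity. I expect the main obstacle to be this construction of a well-posed system from the generators: proving admissibility of $B$ and boundedness of the input-output map purely through the energy estimate, and checking that the resulting system is indeed the one determined by $\AB,\CD$ through \rfb{Kurds_abandoned}. I would handle the final identification using the uniqueness part of Proposition \ref{YomKippur}.
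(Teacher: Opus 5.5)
The paper does not prove this proposition; it quotes it from \citep[Theorem 6.8]{TuWe_survey}, so there is no in-paper argument to compare against. Your self-contained argument is the standard direct one, and it is correct.

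The identity $\Re\langle w,y_0\rangle-\half\|y_0\|^2=\half\|w\|^2-\half\|w-y_0\|^2$ shows that \rfb{ScatteringCond} is equivalent to $(*)$. Necessity then follows by differentiating \rfb{energy_balance} at $t=0$ along the trajectories of Proposition \ref{YomKippur}. Sufficiency follows by integrating $(*)$ along classical trajectories, extending the resulting contractive linear maps by density, and identifying the output operator at $t=0$.

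One justification must be changed. In the sufficiency direction you cannot obtain classical solutions from $u\in\Hscr^1_\loc$ and $B\in\Lscr(U,X_{-1})$ alone. Here $\dot x(t)=\tline_t(Ax_0+Bu(0))+\int_0^t\tline_{t-\sigma}B\dot u(\sigma)\dd\sigma$. Requiring the integral to lie in $X$ for all $\dot u\in L^2_\loc$ is exactly admissibility of $B$, which is part of what you are proving.

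Take $u\in\Hscr^2_\loc((0,\infty);U)$ (or $C^2$) instead. An integration by parts using $(\beta I-A)^{-1}B\in\Lscr(U,X)$ then puts the integral term in $C([0,\infty);X)$. In Proposition \ref{YomKippur}, $\Hscr^1_\loc$ suffices only because well-posedness is assumed there, so your necessity argument is fine as written.

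For the functional equations, note that concatenations of smooth inputs are not smooth in general. So run the density argument on $x_0\in\Dscr(A)$ and inputs that are smooth with compact support in the open intervals involved. Such pairs are dense in $X\times L^2([0,\tau];U)$ and stable under concatenation.

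Finally, your Lumer--Phillips step and the separate admissibility estimate for $C$ are harmless but redundant. Both already follow from the integrated energy inequality with $u=0$.
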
}

Note that according to the above proposition, \rfb{ScatteringCond}
implies that $B$ is an admissible control operator for the semigroup
$\tline$ generated by $A$ and the operator $C$ defined by
\rfb{CD_admissible} is an admissible observation operator for
$\tline$. For more background on admissible control and observation
operators we refer to \citep{HansenWeiss,JaPa_survey,Staf_book,
obs_book} and \citep{Weiss01}. Several other equivalent
characterizations of scattering passive systems can be found in
\citep[Proposition 7.2 and Theorem 7.4]{StWe02}. \vspace{-1mm}

In \citep{StWe12, WeSt13}, a class of scattering passive linear
systems with a special structure has been introduced. This class
appears in models of various systems from mathematical physics, such
as wave, plate and Maxwell equations (for the wave equation see
\citep{Kurula_Zwart}) and it contains the class of systems ``from thin
air'' introduced in \citep{TuWe,WeStTu01,Weiss-Tucsnak2003} as
explained in Remark 2.3 in \citep{StWe12}. Most scattering passive
systems of interest are in this class (if we take care to formulate
the state space and the operators in the right way). They are closely
related to the port-Hamiltonian systems analyzed in
\citep{JacobSkrepek,JacZwa_12,Skrepek}. Following the survey paper
\citep{TuWe_survey}, this class is called the ``Maxwell class'' of
systems. The precise connection between the class of
scattering passive port-Hamiltonian boundary control systems and the
class of systems in the Maxwell class is complicated, neither contains
the other. There are several important examples of systems that belong
to both. We briefly recall the main facts about the Maxwell class of systems. \vspace{-1mm}

If $\Sigma$ is a system in the {\em Maxwell class}, then its state
space $X$ can be decomposed as $X=H\oplus E$, where $H$ and $E$ are
Hilbert spaces. The Hilbert space $U$ is both the input space and the
output space of $\Sigma$. We identify $H$, $E$ and $U$ with their
duals $H'$, $E'$ and $U'$. The Hilbert space $E_0$ is a dense subspace
of $E$ and the embedding $E_0\hookrightarrow E$ is continuous. We
denote by $E_0'$ the dual of $E_0$ with respect to the pivot space
$E$, so that \vspace{-3mm}
\begin{equation} \label{independence_day}
   E_0\subset E \subset E_0' \m,\vspace{-3mm}
\end{equation}
densely and with continuous embeddings. We decompose the state of \m 
$\Sigma$ as follows: \m $x_{_0}=\sbm{z_{_0}\\[1mm] q_{_0}}$\m,
$z_{_0}\in H$, $q_{_0}\in E$\m. The following theorem is extracted
from results in \citep{StWe12}. \vspace{-1mm}

{\color{blue}
\begin{theorem} \label{MainPThm}
Let $H,E,U$ and $E_0$ be as in the previous paragraph, and let the 
operators $L\in\Lscr(E_0,H)$, $K\in\Lscr(E_0,U)$ and \m
$G\in\Lscr(E_0,E_0')$ be such that \vspace{-2mm}
\begin{equation} \label{A4_new}
   \sbm{L\\[1mm] K} \colon E\to H\oplus U \text{ (with
   domain $E_0$) is closed},\vspace{-2mm}
\end{equation}
\begin{equation} \label{A2_new}
   \Re \langle Gq_{_0},q_{_0} \rangle_{E_0',E_0} \m\leq\m 0 \FORALL
   q_{_0}\in E_0 \m.
\end{equation} \label{AB_Maxwell}
The operators $A,B,\bar{C},D$ are defined as follows: \vspace{-2mm}
\begin{equation} \label{SPDef}
   A \m=\m \bbm{0 & -L\\ L^* & G -{\textstyle\half} K^* K},\quad 
   B \m=\m \bbm{0\\K^*}\vspace{-2mm}
\end{equation} \vspace{-2mm}
\begin{equation} \label{DomainA}
   \n\nm\m\Dscr(A) = \left\{\nm\bbm{z_{_0}\\ q_{_0}}\in\nm 
   \begin{array}{c} H\\[-2mm] \times\\[-2mm] E_0\end{array} \right|
   \,\left. \bigbluff L^* z_{_0} + \bigl( G - {\textstyle\half} K^*
   K \bigr) q_{_0} \in E \right\}.
\end{equation}
Accordingly, $\AB$ is defined by \rfb{D(AB)} and \rfb{AB}. The 
operator $\CD$ is defined by \rfb{CD}, where\vspace{-2mm}
$$ \bar{C} \m=\m \left[0 \ -K\right],\qquad \Dscr(\bar{C}) \m=\m 
   E_0,\qquad D \m=\m I.\vspace{-2mm}$$
Then $\AB$ and $\CD$ determine a scattering passive system $\Sigma$.
Moreover, the following claims hold:\vspace{-3mm}
\begin{itemize}
\item[1] If the input function $u$ and the initial state $\sbm{z(0)
\\ q(0)}$ of \m $\Sigma$ satisfy \vspace{-6mm}
\begin{equation} \label{TRoosevelt}
   u \m\in\m \Hscr^1_\loc((0,\infty);U) \m,\qquad \sbm{z(0)\\[1mm] 
   q(0)\\[1mm] u(0)} \m\in\m \Dscr(\AB) \m,\vspace{-2mm}
\end{equation}
then the corresponding state trajectory $\sbm{z\\ q}$ and output
function $y$ of $\Sigma$ satisfy \vspace{-4mm}
\begin{equation} \label{EHoover}
   \sbm{z\\[1.5mm] q} \m\in\m C^1([0,\infty);H\oplus E) \m,\quad
   \sbm{z\\[1mm] q\\[1mm] u}\in C([0,\infty);\Dscr(\AB)) \m,
   \vspace{-2mm}
\end{equation} 
$$y \in \Hscr^1_\loc((0,\infty);Y) \m,$$
and the system is represented by \rfb{Kurds_abandoned}.
\smallskip
\item[2] The semigroup generator $A$ is the restriction of the
   operator \vspace{-2mm}
\begin{equation} \label{A3PR}
   \overline{A} \m=\m \bbm{0 & -L \\ L^* & G-{\textstyle\half}
   K^* K}
\end{equation}
(defined on $H\times E_0$, with values in $H\times E_0'$) to the
domain $\Dscr(A)$ from \rfb{DomainA}.
\smallskip
\item[3] We denote by $X_1$ the space $\Dscr(A)$ with the norm
$\|z\|_1=\|(I-A)z\|$ and by $X_{-1}$ the completion of $X$
with respect to the norm $\|z\|_{-1}=\|(I-A)^{-1}z\|$. We have
\vspace{-2mm}
$$ X_1 \m\subset\m H \times E_0 \m\subset\m X \m\subset\m
   H \times E_0' \m\subset\m X_{-1} \m, \vspace{-2mm}$$
densely and with continuous embeddings. $A$ has a unique extension
to an operator $A\in\Lscr(X,X_{-1})$, whose restriction to $H
\times E_0$ is $\overline{A}$ from \rfb{A3PR}.
\smallskip
\item[4] If $u$, $x=\sbm{z\\ q}$ and $y$ are as in
\rfb{TRoosevelt}--\rfb{EHoover}, then they satisfy the following
{\em power balance equation} for every $t\geq 0$:\vspace{-2mm}
\begin{equation}\label{pow_bal}
\n\n\frac{\dd}{\dd t} \|x(t)\|^2 = \|u(t)\|^2-
   \|y(t)\|^2 + 2\Re\ipd{G q(t)}{q(t)}. \m\vspace{-2mm}
\end{equation}
\end{itemize}
\end{theorem}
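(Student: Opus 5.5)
The plan is to reduce everything to Proposition \ref{ScatteringPassive}. First I will verify that $A$ defined by \rfb{SPDef}--\rfb{DomainA} generates a contraction semigroup on $X=H\oplus E$. Then I will identify $\AB$ and $\CD$ and check the dissipativity inequality \rfb{ScatteringCond}. Items 1 and 2 then follow from Proposition \ref{YomKippur}, item 3 from the construction of $X_{\pm1}$, and item 4 from a direct computation.

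\textbf{Step 1 (the generator).} For $x_0=\sbm{z_0\\ q_0}\in\Dscr(A)$, the pairing $\langle \overline{A}x_0,x_0\rangle$ is well defined as a duality on $(H\times E_0')\times(H\times E_0)$. The skew part gives $\Re(-\langle Lq_0,z_0\rangle+\langle L^*z_0,q_0\rangle)=0$. Hence
$$\Re\langle Ax_0,x_0\rangle=\Re\langle Gq_0,q_0\rangle-\half\|Kq_0\|^2\leq 0 ,$$
so $A$ is dissipative. For maximality I must show that $I-A$ is onto. Given $\sbm{f\\ g}\in X$, I eliminate $z_0=f-Lq_0$. This leaves the problem of finding $q_0\in E_0$ with
$$q_0+L^*Lq_0-Gq_0+\half K^*Kq_0 = g-L^*f \quad\text{in } E_0'.$$
I will solve it by Lax--Milgram on $E_0$, equipped with the graph norm of $\sbm{L\\ K}$. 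By \rfb{A4_new} this norm makes $E_0$ a Hilbert space, and I assume it is equivalent to the given $E_0$-norm (via closed graph). The form $(p,q)\mapsto\langle p,q\rangle+\langle Lp,Lq\rangle+\half\langle Kp,Kq\rangle-\langle Gp,q\rangle$ is bounded and coercive by \rfb{A2_new}, and the right side lies in $E_0'$. The resulting $x_0$ then automatically satisfies the domain condition, since $L^*z_0+(G-\half K^*K)q_0=q_0-g\in E$. By Lumer--Phillips, $A$ generates a contraction semigroup; density of $\Dscr(A)$ follows from reflexivity. \textbf{This step is the main obstacle:} the norm identification of $E_0$ and keeping track of the duality pairings need care.

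\textbf{Step 2 (the operators $\AB$, $\CD$).} I identify $X_{-1}\supset H\times E_0'$ and check that $\overline{A}$ is the restriction of the extended $A$ to $H\times E_0$ (item 3). Density in item 3 comes from $\Dscr(A)\subset H\times E_0$ being dense. Continuity of the embedding $H\times E_0'\subset X_{-1}$ I will obtain via $(I-A)^{-1}$ extended by duality, using that $\overline{A}$ coincides with the restriction of the extended $A$; this is the point behind item 2. Consequently
$$\Dscr(\AB)=\left\{\sbm{z_0\\ q_0\\ u_0}\in H\times E_0\times U \,:\, L^*z_0+(G-\half K^*K)q_0+K^*u_0\in E\right\},$$
and $\CD\sbm{x_0\\ u_0}=-Kq_0+u_0$. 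This is bounded in the graph norm after showing that the graph norm controls $\|q_0\|_{E_0}$, again by the same energy identity.

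\textbf{Step 3 (the scattering inequality and the power balance).} Computing the left side of \rfb{ScatteringCond} as in Step 1 gives
$$\Re\langle Gq_0,q_0\rangle-\half\|Kq_0\|^2+\Re\langle K^*u_0,q_0\rangle-\half\|u_0\|^2+\Re\langle -Kq_0+u_0,y_0\rangle-\half\|y_0\|^2 .$$
With $w=u_0-Kq_0$, the three non-$G$ groups combine to
$$-\half\|w\|^2+\Re\langle w,y_0\rangle-\half\|y_0\|^2=-\half\|w-y_0\|^2 ,$$
so the whole expression equals $\Re\langle Gq_0,q_0\rangle-\half\|w-y_0\|^2\leq 0$. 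Proposition \ref{ScatteringPassive} then gives scattering passivity, and Proposition \ref{YomKippur} gives item 1.

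For item 4, I differentiate $\|x(t)\|^2$ and get $2\Re\langle\AB\sbm{x\\ u},x\rangle$. The same computation with $y_0=y(t)$ shows that this derivative equals $\|u\|^2-\|y\|^2+2\Re\langle Gq,q\rangle$, which is \rfb{pow_bal}.
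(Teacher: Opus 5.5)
The paper does not prove this theorem. It quotes it from \citep{StWe12} and only remarks that claim 1 follows from Proposition \ref{YomKippur} and claim 2 from \rfb{D(AB)}--\rfb{AB}. Your self-contained route is therefore genuinely different from the paper's, and more informative: Lumer--Phillips plus Lax--Milgram for the generator, then Proposition \ref{ScatteringPassive} by completing the square. Steps 1 and 3 are sound. The identity $\Re\langle Gq_0,q_0\rangle-\half\|w-y_0\|^2$ is exactly right, and with $y_0=y(t)$ it gives \rfb{pow_bal}. Two small corrections:
\begin{itemize}
\item The reduced equation has right-hand side $g+L^*f$, not $g-L^*f$. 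Your domain check $q_0-g\in E$ is consistent with the $+$ sign.
\item The norm equivalence on $E_0$ is not an extra assumption. The graph norm of $\sbm{L\\ K}$ is dominated by the $E_0$-norm, since $L$, $K$ and the embedding are bounded. Both norms are complete by \rfb{A4_new}, so the bounded inverse theorem gives equivalence.
\end{itemize}

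The genuine gap is in Step 2, and it is load-bearing.
\begin{itemize}
\item You need $H\times E_0'\subset X_{-1}$ to know that $B=\sbm{0\\ K^*}\in\Lscr(U,X_{-1})$, which is a hypothesis of Proposition \ref{ScatteringPassive}.
\item You need $\Dscr(\AB)\subset H\times E_0\times U$ for $\CD$ to be defined at all by \rfb{CD}, because $\bar C=[0\ \,-K]$ only acts on $H\times E_0$.
\item Your justification is circular. You invoke that $\overline{A}$ coincides with the restriction of the extended $A$, but that is part of what claim 3 asserts; claim 2 is merely the definition of $A$.
\item Density of $X_1$ in $H\times E_0$ does not follow from density of $\Dscr(A)$ in $X$, because the $E_0$-topology is finer.
\end{itemize}

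The missing observation is that your Lax--Milgram argument works verbatim for every $g\in E_0'$. Hence $I-\overline{A}$ is a bounded bijection, and so an isomorphism, from $H\times E_0$ onto $H\times E_0'$. Put $R=(I-\overline{A})^{-1}$ and let $A_{-1}$ denote the extension of $A$. The consequences are:
\begin{itemize}
\item For $v\in X$ we have $\overline{A}Rv=Rv-v\in X$, so $Rv=(I-A)^{-1}v$ and $\|v\|_{-1}\leq c\|v\|_{H\times E_0'}$.
\item The identity on $X$ therefore extends to a bounded map $J:H\times E_0'\to X_{-1}$. Approximating by elements of $X$, which is dense in $H\times E_0'$, gives $(I-A_{-1})^{-1}J=R$.
\item This yields injectivity of $J$ and $A_{-1}x=\overline{A}x$ for all $x\in H\times E_0$.
\item $X_1=R(X)$ is dense in $R(H\times E_0')=H\times E_0$.
\item If $A_{-1}x_0+Bu_0=w\in X$, then $(I-A_{-1})x_0=x_0-w+Bu_0\in H\times E_0'$. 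Hence $x_0=R(x_0-w+Bu_0)\in H\times E_0$, which is your description of $\Dscr(\AB)$.
\end{itemize}

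For boundedness of $\CD$, the energy identity controls $\|Kq_0\|$, and that is all you need. Controlling $\|q_0\|_{E_0}$, as you claim, would also require $\|Lq_0\|\leq\|\AB\sbm{x_0\\ u_0}\|$ from the first row. With these additions your proof is complete.
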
}

We remark that the additional claim 1 is a consequence of the main
statement together with Proposition \ref{YomKippur}, while claim 2
follows the main statement together with \rfb{D(AB)} and \rfb{AB}.
\vspace{-1mm}

The class of systems described in the above theorem are called the 
Maxwell class. Thus, any system in this class is described by the
following equations (for all $t\geq 0$):\vspace{-2mm}
\begin{equation} \label{Tutti_left}
   \begin{aligned} \bbm{\dot z(t) \\ \dot q(t)} &= \bbm{0 & -L\\ 
   L^* & G - \half K^* K}\bbm{z(t)\\ q(t)} + \bbm{0 \\ K^*}
   u(t),\\ y(t) &= \bbm{0 & -K} \bbm{z(t)\\ q(t)} + u(t). 
   \end{aligned}\vspace{-2mm}
\end{equation}
It is clear from \rfb{A2_new} and \rfb{pow_bal} that systems in this
class are scattering passive. \vspace{-2mm}

We introduce also the class of {\em impedance passive systems
in the Maxwell class}, as in equation (1.8) in
\citep{StWe12} (with slightly changed notation). These systems are
described by the following equations: For $t\geq 0$, \vspace{-2mm}
\begin{equation} \label{Tiszapart}
   \begin{aligned} \bbm{\dot z(t) \\ \dot q(t)}
   &= \bbm{0 & -L\\ L^* & G}\bbm{z(t) \\ q(t)} + \bbm{0 \\ K_0^*} 
   e(t), \\ f(t) &= \bbm{0 & \phantom{-}K_0} \bbm{z(t) \\ q(t)},
   \qquad K_0 \m=\m \frac{1}{\sqrt{2}} K \m.\end{aligned}\vspace{-1mm}
\end{equation}
Here the state space is again $X=H\oplus E$, the input signal is $e$
(also called {\em control effort}), the output signal is $f$ (called
{\em flow}), and the assumptions on the operators $L,K,G$ are exactly
as in Theorem \ref{MainPThm}. These systems are system nodes in the
sense of \citep{MaStWe,StWe12,TuWe_survey}. The corresponding operator
semigroup generator is $A_\imp$, which is the restriction of
\vspace{-3mm}
\begin{equation} \label{Eminem}
   \overline{A_\imp} \m=\m \bbm{0 & -L \\ L^* & G}\vspace{-2mm}
\end{equation}
(defined on $H\times E_0$, with values in $H\times E_0'$) to the
domain \vspace{-3mm}
\begin{equation} \label{Pietro_with_Covid}
   \Dscr(A_\imp) \m=\m \left\{\bbm{z_{_0}\\ q_{_0}}\in 
   \begin{array}{c} H\\[-2mm] \times\\[-2mm] E_0\end{array}
   \right| \left. \bigbluff L^* z_{_0} + G q_{_0} \in E \right\}.
   \vspace{-2mm}
\end{equation}
The control operator is $B_\imp=\sbm{0\\ K_0^*}$ and the operator
$[\CD]_\imp$ is defined similarly as in \rfb{CD}, with $\overline C
_\imp$ in place of $\overline C$, and 0 in place of $D$, where 
$\overline C_\imp=[0\ \ K_0]$. \vspace{-1mm}

The systems in this class need not be well-posed. They have state and
output trajectories for a dense set of initial states and input
functions, like any system node, and these satisfy \rfb{Tiszapart}.
Moreover, these state and output trajectories satisfy\vspace{-1mm}
\begin{equation} \label{Hannsa_vaccinated1}
   \half\frac{\dd}{\dd t} (\|z(t)\|^2+\|q(t)\|^2) \m=\m \Re
   \left\langle Gq(t),q(t)\right\rangle+\Re\left\langle e(t),f(t)
   \right\rangle \m.
\end{equation}
Thanks to \rfb{A2_new} we obtain that\vspace{-2mm}
\begin{equation} \label{Hannsa_vaccinated}
   \frac{\dd}{\dd t} (\|z(t)\|^2+\|q(t)\|^2) \m\leq\m
   2\Re\left\langle e(t),f(t) \right\rangle \m.\vspace{-1mm}
\end{equation}
This estimate \rfb{Hannsa_vaccinated} is what defines the class of
{\em impedance passive systems}. They include passive electric
circuits where $e$ is the applied voltage (vector) and $f$ is the
current. \vspace{-1mm}

For systems as in \rfb{Tiszapart}, we can introduce new input and
output signals via the {\em external Cayley transformation}:
\vspace{-3mm}
$$ u \m=\m \frac{1}{\sqrt 2}(e+f),\qquad y \m=\m \frac{1}{\sqrt
   2}(e-f),\vspace{-1mm}\vspace{-1mm}$$
which can be interpreted also as a feedback loop and a feedforward
loop around the original system, see \citep[Fig.~1]{StWe12}. After this
transformation, our system is described by \rfb{Tutti_left} and it is
scattering passive, which means that \rfb{ScaPassive}
holds. \vspace{-1mm}

The main result outlined in Section \ref{sec1} (the precise statement
is in Section \ref{sec6}) allows us to generalize this class of
systems by allowing the appearance of a maximal monotone nonlinear
operator $\Nscr$ subtracted from $G$ in \rfb{SPDef}. Thus, the
operator $\overline A$ is replaced with the nonlinear operator
\vspace{-2mm}
\begin{equation} \label{A_nonl}
   \overline{A} \m=\m \bbm{0 & -L\\ L^* & G-\Nscr-{\textstyle\half}
   K^* K} \m,\vspace{-2mm}
\end{equation}
where $\Nscr:E\rarrow E$ is maximal monotone. The well-posedness of
this particular class of nonlinear systems is the topic of our
conference paper \citep{ShWeTu:20}. Unfortunately, there are some
mistakes in the example in \citep{ShWeTu:20}, the corrected version is
here in the next section. \vspace{-1mm}

\section{Motivating examples} \label{sec3} 

In the modelling of physical systems, we often come across second 
order differential equations with a nonlinear damping term depending
on the velocity, such as \vspace{-3mm}
\begin{eqnarray}\label{Alstom_to_be_sold}
   \left\{\begin{array}{ll}\ddot x(t) + D \dot x(t) + \Nscr(\dot x(t))
   + A_0 x(t) \m\ni\m B_0 e(t) \m, \\ f(t) \m=\m C_0 \dot x(t) + D_0 
   e(t) \m.\end{array}\right.\vspace{-2mm}
\end{eqnarray}
Here $x(t)\in E$, where $E$ is a finite-dimensional inner product
space. The function $x\in C^2([0,\infty);E)$ usually represents a
vector of displacements, $A_0,\ D\in\Lscr(E)$ are such that $A_0>0$ 
and $D\geq 0$ and $\Nscr:E\rarrow E$ is a monotone set-valued 
function. The $U$-valued signals $e$ and $f$ are the input and the 
output of the system ($U$ is another finite-dimensional space), 
while $B_0\in\Lscr(U,E)$, $C_0\in\Lscr(E,U)$ and $D_0\in\Lscr(U)$.
\vspace{-1mm}

The addition of the damping term $\Nscr$ may create a much more
complex dynamic behaviour (compared to the linear case), as the
following example illustrates. \vspace{-1mm}

\begin{example} \label{3.1} {\rm
Let us assume that $x$ represents the one-dimensional displacement of
a rigid body with mass $m>0$ along a straight line, under the
influence of the external force $e$, while it is connected to the
point denoted by zero on this straight line via a spring with constant
$k>0$, having viscous friction with coefficient $d>0$, $A_0=k/m$,
$D=d/m$ and $B_0=1/m$. Suppose that the nonlinear function $\Nscr$ is
\vspace{-2mm}
\begin{equation} \label{Nigeria_schoolgirls}
   \Nscr(v) \m=\m \beta \m {\rm sign}(v) \m,\vspace{-2mm}
\end{equation}
where $m\beta>0$ is the amplitude of the Coulomb (or static) friction
force and sign (the multi-valued signum function) is defined by
\vspace{-5mm}
\begin{equation} \label{Noya}
   {\rm sign}(v) \m=\m \left\{ \begin{array}{ll} 1 &\mbox{ if } v>0 
   \m,\\[-1mm] -1 &\mbox{ if } v<0 \m,\\[-1mm] {\rm [-1,1]}
   &\mbox{ if } v=0 \m.\end{array} \right. \vspace{-2mm}
\end{equation}
It is well known that in this case, for any initial state $(x(0),\
\dot{x}(0))$ and any continuous function $e$, \rfb{Alstom_to_be_sold}
has a unique solution. If $e$ is sufficiently small, then this
solution stops in finite time (at a point that may depend on $e$ and
on $(x(0),\ \dot{x}(0))$), see for instance \cite{AmDi}, \cite{DiMi}.
Thus, the system has a continuum of equilibrium points, none of which
is locally asymptotically stable. If we replace $\Nscr(v)$ with the
shifted version $\Nscr(v-v_0)$ (which is still monotone, and
represents Coulomb friction with respect to a moving platform having
velocity $v_0\not=0$) then the system has a globally asymptotically
stable equilibrium point.}
\end{example} \vspace{-1mm}

The second example (below) will illustrate that the addition of the
damping term $\Nscr$ in \rfb{Alstom_to_be_sold} does not necessarily
improve the stability properties of the system. We should not
necessarily think of $\Nscr$ as a clever addition to the system, meant
to improve it, but as some physical phenomenon present in the system
that needs to be modeled. \vspace{-1mm}

\begin{example} \label{3.2} {\rm
Consider a mechanical system consisting of two rigid bodies with
masses $m_1$ and $m_2$, two springs with constants $k_1$ and $k_2$ and
a damper with constant $d$, all moving along a straight line, as in
Fig. \ref{MassSpringDamper}. Its equations are \vspace{-2mm}
\begin{eqnarray}
   \left\{\begin{array}{ll} m_1 \ddot x_1(t)+k_1 x_1(t)+k_2
   (x_1(t)-x_2(t)) \m=\m 0 \m,\\ m_2 \ddot x_2(t)-k_2(x_1(t)-x_2(t)) 
   + d \dot x_2(t) \m=\m e(t) \m,\\ f(t) \m=\m \dot x_2(t).
   \end{array}\right.\nonumber\vspace{-2mm}
\end{eqnarray}
Here $x_1$ and $x_2$ denote the displacements of the rigid bodies from
their equilibrium positions and $e$ is an external force acting on the
second rigid body. Assuming that all the constants are positive, it is
easy to check that this system is exponentially stable. If, in
addition, there is also a static friction force of amplitude $\beta$
between the second body and its supporting surface, then the
second equation changes to \vspace{-2mm}
$$ m_2 \ddot x_2(t)-k_2(x_1(t)-x_2(t)) + d \dot x_2(t) + \beta \m {\rm
   sign}(\dot x_2(t)) \m\ni\m e(t) \m,\vspace{-1mm}$$
where the multi-valued function ${\rm sign}$ is as in \rfb{Noya}. This
system can be put into the framework \rfb{Alstom_to_be_sold} with
$\zeta=\sbm{\zeta_1\\ \zeta_2}=\sbm{\sqrt{m_1}x_1\\ \sqrt{m_2}x_2}$ in
place of $x=\sbm{x_1\\x_2}$, by defining $E=\rline^2$, $U=\rline$,
$C_0=\bbm{0 & \frac{1}{\sqrt{m_2}}}$, $B_0=C_0^\top$,
$$ A_0 \m=\m \bbm{ \frac{k_1+k_2}{m_1} & -\frac{k_2}{\sqrt{m_1m_2}}\\
   -\frac{k_2}{\sqrt{m_1m_2}} &\frac{k_2}{m_2} },\qquad D \m=\m 
   \bbm{0 & 0\\ 0 & \frac{d}{m_2}} ,$$
$$ \Nscr\left(\bbm{\zeta_1\\ \zeta_2}\right) \m=\m \bbm{0\\ 
   \frac{\beta}{\sqrt{m_2}} \m {\rm sign}(\zeta_2)} \m,\qquad 
   D_0 \m=\m 0 \m.$$
Note that $A_0>0$ and $D\geq 0$, as assumed after
\rfb{Alstom_to_be_sold}. If we introduce new state variables
$z=-A_0^\half\zeta$ and $q=\dot{\zeta}$, then the linear version of
the system fits into the framework of \rfb{Tiszapart}, with
$L=L^*=A_0^\half$, $G=-D$ and $K_0=C_0$. \vspace{-1mm}

\begin{figure}[t] 
   \centering 
   \includegraphics[height=2cm, width=6cm]{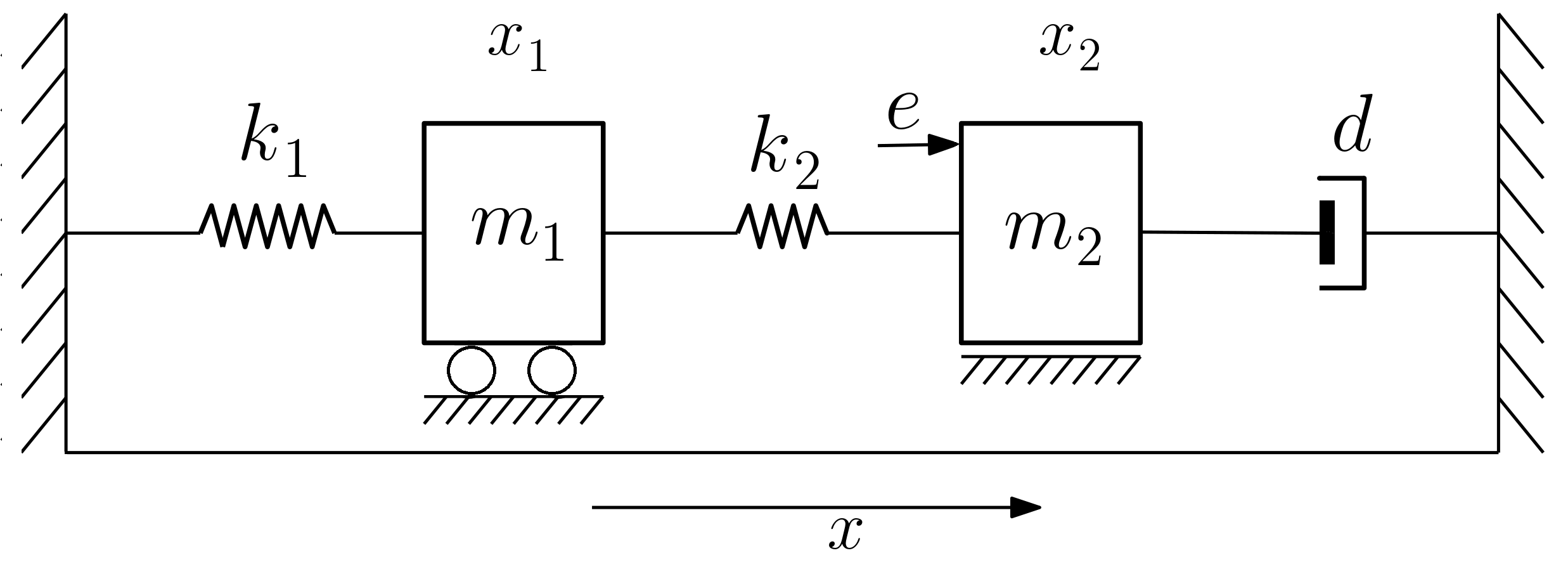}
   \vspace{-2mm}
   \caption{The mechanical system from Example \ref{3.2}. Rigid bodies
   with masses $m_1$ and $m_2$ are connected via two springs (with
   spring constants $k_1$ and $k_2$) and a damper (with constant $d$).
   The external force acting on $m_2$ is $e$.}\label{MassSpringDamper}
\end{figure}

The addition of static friction does not improve the stability of the
system, quite the contrary. Assuming that $e=0$, a typical state
trajectory is such that $\dot x_2(t)$ becomes zero in finite time,
after which $x_1(t)$ oscillates with constant amplitude. \vspace{-1mm}

This paper is about systems that include infinite-dimensional versions
of \rfb{Alstom_to_be_sold}, and our main interest is their
well-posedness, considering input and output signals of class $L^2$.

There is a large literature on systems described by linear partial
differential equations with a nonlinear damping term, acting in the
interior or on the boundary of the domain. We mention, as a
representative sample only, the papers \citep{AlabAmma, Barbu}
(abstract second order in time equations), \citep{Berr} (beam
equations), \citep{chitour, ConRao, Haraux} (wave equations),
\citep{ELN_decay} (Maxwell's equations), \citep{Lasi_1989} (wave and
plate equations), \cite{LaTa} (wave equations), \cite{Zuazua_1990}
(wave equations). As far as we are aware, most of the papers on this
topic treat the well-posedness of the associated Cauchy problem, i.e.,
the existence of a non-linear strongly continuous semigroup on the
state space, and its various asymptotic stability properties (decay
rates). It seems that little attention has been devoted to systems
with input and output signals that are described by equations
containing a nonlinear damping term.  Our aim in this paper is to fill
this gap in an abstract and fairly general framework.}
\end{example} \vspace{-1mm}

\begin{example} \label{SCOLE_ex} {\rm
Consider the vibrations in a fixed vertical plane of a vertical beam
clamped at the bottom, having a rigid body with a large mass $M$
mounted on the top. Such a system could represent, for instance, a
wind turbine tower with the nacelle and the turbine together playing
the role of the rigid body as shown in Fig. \ref{tower}. If we adopt
the homogeneous Euler-Bernoulli model for the beam, then this is the
famous SCOLE system, introduced in \cite{LittMa:881,LittMa:882} (the
authors had in mind an antenna on a flexible mast). Suppose that a
perturbation force $F$ acting horizontally on the rigid body in the
fixed vertical plane, causes the beam to vibrate. In the case of the
wind turbine, this force would represent the wind acting on the
turbine and the nacelle. The well-posedness and other properties of
this system (linear version) were analyzed in \cite{ZhWe:10}, with
many more relevant references. \vspace{-1mm}

We try to dampen the vibrations of this system by placing a trolley of
mass $m$ in contact with the rigid body, with friction between
them. This friction has a component of viscous friction with constant
$D$ and a component of static friction with amplitude $F_0$. The idea
is to absorb the vibration energy via these frictions. Such dampers or
more sophisticated versions, called {\em tuned mass dampers}, are
often used to dampen the vibrations of very tall buildings, see, e.g.,
\cite{TMD_Fitz}, \cite{Hro}, \cite{Var} and the references
therein. Assuming that the beam is uniform, with height $l$, the model
(the SCOLE system coupled with the trolley, defined for $(x,t)\in
(0,l)\times [0,\infty)$), is the following collection of equations:
\vspace{-2mm}
\begin{equation} \label{eq:SCOLE}
   \n\left\{\begin {array}{llll} \rho w_{tt}(x,t)+ EI w_{xxxx}(x,t)
   \m=\m 0,\qquad\\ w(0,t) \m=\m 0\m,\qquad\ w_{x}(0,t) \m=\m 0 \m,\\ 
   M w_{tt}(l,t)-EIw_{xxx}(l,t) \m=\m F(t)-D [w_t(l,t)-\xi_t(t)]\\ 
   \hspace{12mm}-F_0 {\rm sign} [w_t(l,t)-\xi_t(t)] - k [w(l,t)-
   \xi(t)] \m, \\ J{w}_{xtt}(l,t)+ EI w_{xx}(l,t) \;=\> 0 \m, \\ m 
   \xi_{tt}(t) \m=\m D [w_t(l,t)-\xi_t(t)]\\ \hspace{12mm} +F_0 {\rm 
   sign} [w_t(l,t)-\xi_t(t)] + k [w(l,t)-\xi(t)],\end{array} \right.
\end{equation}
where the subscripts $t$ and $x$ denote derivatives with respect to
the time $t$ and the position $x$, respectively. We have denoted by
$w$ the transverse displacement of the beam, and by $\xi$ the
horizontal displacement of the trolley with respect to an equilibrium
position. The positive constants $EI$, $\rho$ and $J$ are the flexural
rigidity of the beam, the mass density of the beam and the moment of
inertia of the rigid body. We have assumed that the trolley is
connected to the rigid body by a spring with constant $k$, whose role
is to prevent the trolley from drifting away too far from its
equilibrium position. The function sign has been defined in
\rfb{Noya}. The linear version of this system (corresponding to
$F_0=0$) but with a non-uniform beam, has been investigated in Section
5 of \cite{ZhWe:17}. \vspace{-1mm}

The signal $F$ is the force input acting on the rigid body.
$-EIw_{xxxx}(x,t)\dd x$ is the total lateral force acting on a slice
of the beam of length $\dd x$, located at the position $x$ and the
time $t$. $EIw_{xxx}(l,t)$ and -$EIw_{xx}(l,t)$ are the force and the
torque acting on the rigid body from the beam at $t$. The input and
output signals of system \rfb{eq:SCOLE}, $e$ and $f$, are as follows:
\vspace{-2mm}
\begin{equation} \label{eq:SCOLEoutput}
   e \m=\m F \m,\qquad f \m=\m  w_{t}(l,\cdot) \m.\vspace{-2mm}
\end{equation}
A convenient choice for the state of this nonlinear coupled system is
$\sbm{z\\ q}$, where \vspace{-2mm}
$$ z(t) = \bbm{\sqrt{\rho}\m w_t(\cdot,t)\\ \sqrt{k}(\xi(t)- w(l,t))
   \\ \sqrt{J}\m w_{xt}(l,t)} \m,\quad
   q(t) = \bbm{\sqrt{EI}\m w_{xx}(\cdot,t)\\ \sqrt{M}\m w_t(l,t)\\ 
   \sqrt{m}\m \xi_t(t)} \m.$$
We denote by $z_1(t),\m z_2(t),\m z_3(t)$ the components of $z(t)$,
and similarly for the vector $q(t)$. The state space is
\vspace{-2mm}
$$ X \m=\m L^2[0,l]\times\rline^2 \times L^2[0,l]\times\rline^2 \m,
   \vspace{-2mm}$$
with the natural product norm (here $L^2[0,l]$ consists of real-valued
functions). The physical energy in the system is 
$\half\|z\|^2+\frac{1}{2}\|q\|^2=\half\|[z \quad q]^\top\|^2_X$.
\vspace{-1mm}

\begin{figure}[h!] 
   \centering 
   \includegraphics[height=9cm, width=5cm]{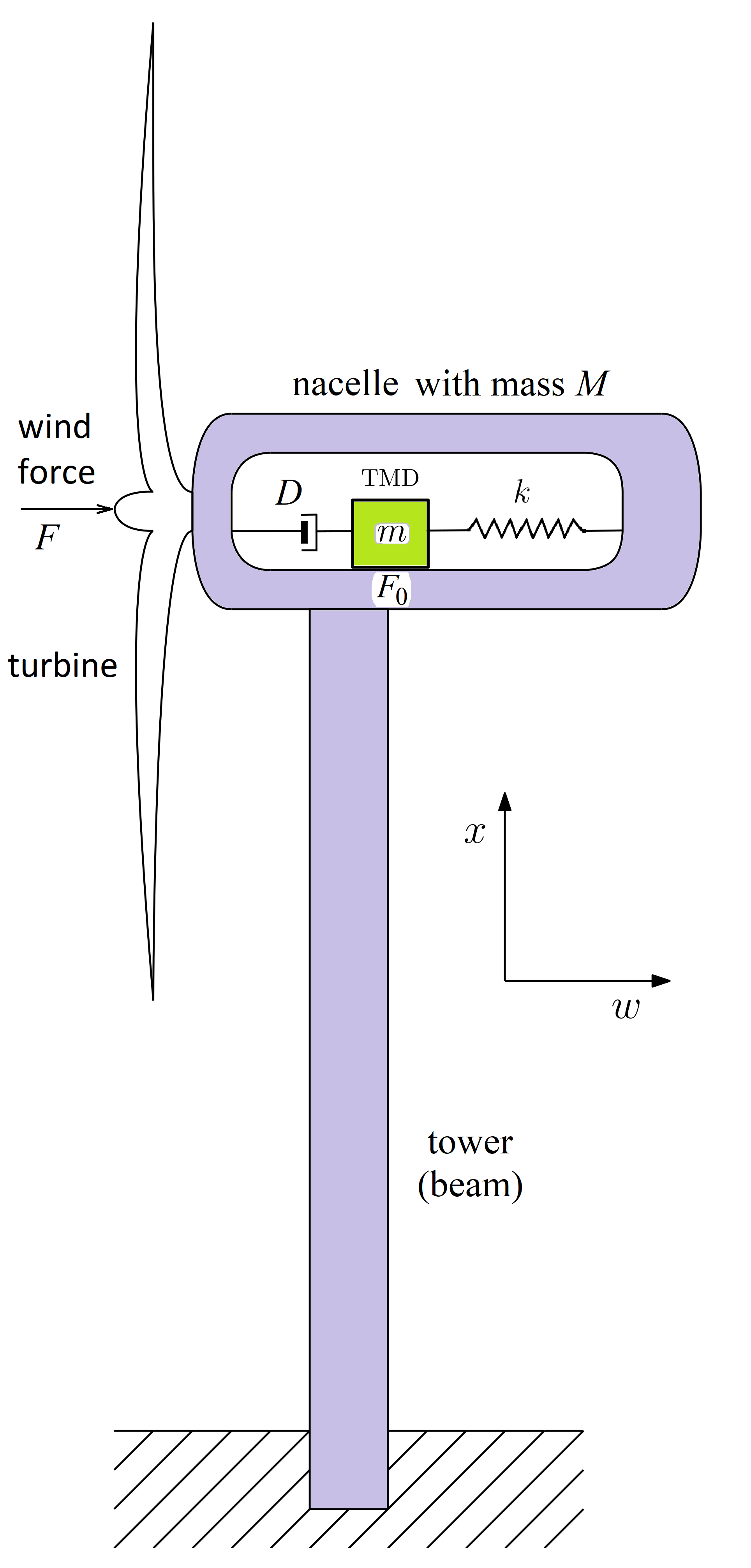}
   \caption{A wind tower coupled with a tuned mass damper (TMD). The
   tower is clamped at the bottom with a heavy mass $M$ (nacelle
   together with turbine) on the top and it is described by the SCOLE
   model. The mass of the trolley (i.e., of the TMD) is $m$ and it is connected to the
   nacelle via a spring and a damper.} \label{tower}
\end{figure}

The {\em linear version} of this system, which corresponds to $F_0=0$,
can be expressed as an impedance passive system in the Maxwell class,
as in \rfb{Tiszapart}. To show this, we define \vspace{-2mm}
$$ H \m=\m E \m=\m L^2[0,l]\times\rline^2 \m,\quad E_0 \m=\m 
   \Hscr^2(0,l)\times\rline^2\m,\vspace{-1mm}$$
and $U=\rline$. For any $\zeta\in [0,l]$, $\delta_\zeta\in
(\Hscr^2(0,l))'$ denotes the unit pulse at $\zeta$, so that its
adjoint $\delta^*_\zeta$ is the operator of point evaluation at
$\zeta$ defined on $\Hscr^2(0,l)\hspace{-0.5mm}:\delta^*_\zeta
\varphi=\varphi(\zeta)$. The operators $L$ and $K_0$ from
\rfb{Tiszapart} are defined by \vspace{-2mm}
$$ L \m=\m \bbm{\scriptstyle\sqrt{\frac{EI}{\rho}}\frac{\dd^2}{\dd 
   x^2} & 0 & 0 \\ 0 & \scriptstyle \sqrt{\frac{k}{M}} & \scriptstyle
   -\sqrt{\frac{k}{m}}\\ \scriptstyle \sqrt{\frac{EI}{J}}\delta^*_l &
   0 & 0} \m,\quad K_0 \m=\m \bbm{0 & \scriptstyle\frac{1}{\sqrt{M}} 
   & 0}.$$
The operator $G\in\Lscr(E_0,E_0')$ from \rfb{Tiszapart} is defined by
\vspace{-3mm}
$$ G \m=\m \bbm{0 & \scriptstyle\sqrt{\frac{EI}{M}}\m \delta'_l & 0\\
   \scriptstyle \sqrt{\frac{EI}{M}}\delta_l^* \frac{\dd}{\dd x} & 
   \scriptstyle -\frac{D}{M} & \scriptstyle \frac{D}{\sqrt{Mm}} \\ 
   0 & \scriptstyle \frac{D}{\sqrt{Mm}} & \scriptstyle 
   -\frac{D}{m}},$$
and the adjoint of $G$ is computed to be \vspace{-3mm}
$$ G^* \m=\m \bbm{0 & -\scriptstyle\sqrt{\frac{EI}{M}}\m \delta'_l & 
   0\\ -\scriptstyle \sqrt{\frac{EI}{M}}\delta_l^* \frac{\dd}{\dd x}
   & \scriptstyle -\frac{D}{M} & \scriptstyle \frac{D}{\sqrt{Mm}} \\ 
   0 & \scriptstyle \frac{D}{\sqrt{Mm}} & \scriptstyle 
   -\frac{D}{m}}.$$
Notice that $G+G^*\in\Lscr(E)$ and $G+G^*\leq 0$, so that \rfb{A2_new}
holds. \vspace{-1mm}

It is a bit tricky to compute $L^*$. For any $\zeta\in[0,l]$, we
denote by $\delta'_\zeta$ the derivative of $\delta_\zeta$, then its
adjoint ${\delta'_\zeta}^*$ is the functional ${\delta'_\zeta}^*
\varphi=\langle\varphi,\delta'_\zeta\rangle=-\varphi'(\zeta)$, for any
$\varphi\in\Hscr^2(0,l)$. We have that $\frac{\dd^2}{\dd x^2}\in
\Lscr(\Hscr^2(0,l);L^2(0,l))$ so that $\left[\frac{\dd^2}{\dd x^2}
\right]^*\in \Lscr(L^2(0,l);(\Hscr^2(0,l))')$, however we only compute
its restriction to $\Hscr^2(0,l)$. It follows from integration by
parts that the restriction of $\left[\frac{\dd^2}{\dd x^2}\right]^*$
to $\Hscr^2(0,l)$ is given by \vspace{-2mm}
$$ \left[\frac{\dd^2}{\dd x^2}\right]^* = \frac{\dd^2}{\dd x^2}+
   \Delta, \ \ \  \Delta = \delta_l{\delta'_l}^*-\delta'_l\delta_l^*
   -\delta_0{\delta'_0}^*+\delta'_0\delta_0^*.$$
Therefore, the restriction of $L^*$ to $E_0$ is the operator matrix 
\vspace{-2mm}
$$ L^* \m=\m \bbm{\scriptstyle\sqrt{\frac{EI}{\rho}}\left(\frac{\dd^2}
   {\dd x^2}+\Delta\right) & 0 & \scriptstyle \sqrt{\frac{EI}{J}}
   \delta_l \\ 0 & \scriptstyle \sqrt{\frac{k}{M}} & 0\\ 0 & 
   \scriptstyle -\sqrt{\frac{k}{m}} & 0}.\vspace{-2mm}$$
The domain of the corresponding semigroup generator $A_\imp$, computed
according to \rfb{Pietro_with_Covid}, turns out to be \vspace{-2mm}
$$ \Dscr(A_\imp) \m=\m \left\{ \bbigbluff\bbm{z_{_0}\\ q_{_0}}\in 
   \begin{array}{c} \vspace{-1mm} E_0\\ \times\\ E_0\end{array} \right|
   \ \left. \bluff \begin{array}{c} \sqrt{J}z'_1(l)=\sqrt{\rho}z_3\\
   \sqrt{M}z_1(l)=\sqrt{\rho}q_2\\ z_1(0)=0\\ z'_1(0)=0 \end{array}
   \right\}.\vspace{-4mm}$$

Now we can verify by computations that \rfb{Tiszapart} is equivalent
to the linear version of \rfb{eq:SCOLE}. This linear system is {\em
impedance passive} in the sense of \rfb{Hannsa_vaccinated}, a
consequence of the theory presented in Section \ref{sec2}.
\vspace{-1mm}

In the nonlinear case, $G$ in \rfb{Tiszapart} and
\rfb{Hannsa_vaccinated1} has to be replaced with $G-\Nscr$, where
$\Nscr$ is a (set-valued) maximal monotone operator defined by
\vspace{-2mm}
$$ \Nscr(q) \m=\m \bbm{ 0\\ \frac{F_0}{\sqrt{M}} {\rm sign} 
   \left( \frac{1}{\sqrt{M}} q_2 - \frac{1}{\sqrt{m}} q_3\right)\\
   -\frac{F_0}{\sqrt{m}} {\rm sign} \left( \frac{1}{\sqrt{M}} q_2 - 
   \frac{1}{\sqrt{m}} q_3\right)} \m.$$
The definition of $\Nscr$ is such that if it happens
that the argument of sign, which we denote by \vspace{-3mm}
$$ Q \m=\m \frac{1}{\sqrt{M}} q_2 - \frac{1}{\sqrt{m}} q_3\m,
   \vspace{-1mm}$$
is zero, then the selection of sign\m $Q$ in the second and third row
of $\Nscr(q)$ must be the same number from $[-1,1]$. 
This is because of Newton's law of action and reaction: the static 
friction force from the TMD to the nacelle must be opposite to the 
static friction force from the nacelle to the TMD. Then it is easy to
check that \vspace{-1mm}
$$ \langle \Nscr(q)-\Nscr(\tilde q),q-\tilde q\rangle \m=\m
   F_0 \left( {\rm sign}\m Q-{\rm sign}\m \tilde Q\right) 
   \left( Q-\tilde Q\right) \geq 0 \m,\vspace{-1mm}$$
so that indeed $\Nscr$ is monotone. By an easy argument that we 
omit, $\Nscr$ has no proper monotone extension. \vspace{-1mm}

The equations \rfb{Tiszapart} and the estimate
\rfb{Hannsa_vaccinated1}, with $G-\Nscr$ in place of $G$, hold for any
solution of \rfb{eq:SCOLE}. The fact that such solutions exists
follows from Theorem \ref{Gen_sol} and Remark \ref{bounded_B} later in
this article. After applying the external Cayley transformation to
system \rfb{eq:SCOLE}, the transformed system is described by
\rfb{Tutti_left}, but again with $G-\Nscr$ in place of $G$. This
nonlinear system is scattering passive (in particular, it is
well-posed), as follows from the theory in Section \ref{sec6}.}
\end{example} \vspace{-1mm}

\section{Some background about the Lax-Phillips semigroup} 
\label{sec4} 

Starting from an arbitrary well-posed linear system $\Sigma$, it is
possible to define a strongly continuous semigroup which resembles
those encountered in the scattering theory of \cite{LaPhBook,LaPh73},
and which contains all the information about $\Sigma$. We recall the
basics about this semigroup, following \cite{StWe02} (related material
is also in \cite{ChWe:05} and \cite{Staf_book}).\vspace{-1mm}

Like in the previous section, we assume that $\Sigma$ is a well-posed
linear system with component operator families as in \rfb{Sig4b}, and
we continue to use also the notation $U$, $X$ and $Y$. For any $\tau
\geq 0$, we denote by $\SSS_\tau$ the (unilateral) right shift
operator by $\tau$ on $\Uscr=L^2([0,\infty);U)$ and also on
$\Yscr=L^2((-\infty,0];Y)$, so that their adjoints $\SSS^*_\tau$ are
the operators of left shift by $\tau$ on the same spaces. We also
introduce $\Sscr_t$, the bilateral right shift by $t$ acting on
$L^2((-\infty,\infty);Y)$ (where $t\in\rline$). We regard $\Yscr$ as a
subspace of $L^2((-\infty,\infty);Y)$, by extending functions in
$\Yscr$ to be zero for $t>0$.\vspace{-1mm}

{\color{blue}
\begin{proposition} \label{LaxPhilProp}
For all $t\geq 0$ we define on $\Yscr\times X\times\Uscr$ the 
operator $\ULax_t$ by\vspace{-5mm}
$$ \ULax_t \m=\m \bbm{\Sscr_{-t} & 0            & 0 \\
                          0      & I            & 0 \\
                          0      & 0            & \SSS_t^*}
                 \bbm{    I      & \Psi_t       & \fline_t\\
                          0      & \tline_t     & \Phi_t\\
                          0      & 0            & I}.\vspace{-2mm}$$
Then $\ULax=(\ULax_t)_{t\geq 0}$ is a strongly continuous semigroup.
\end{proposition}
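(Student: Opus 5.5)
The plan is to verify the semigroup property $\ULax_{t+s}=\ULax_t\ULax_s$ and strong continuity directly. Both follow from the four functional equations that are part of the definition of a well-posed linear system (Section 2 of \citep{StWe02}). In the convention where $\fline_\tau$ and $\Psi_\tau$ produce outputs on $[0,\tau]$, the semigroup identity for $\tline$ is $\tline_{t+s}=\tline_t\tline_s$. The other three equations are
$$\Phi_{t+s}u=\tline_t\Phi_s u+\Phi_t\SSS^*_s u,$$
$$\Psi_{t+s}x=\Psi_s x+\SSS_s\Psi_t\tline_s x,$$
$$\fline_{t+s}u=\fline_s u+\SSS_s\bigl(\Psi_t\Phi_s u+\fline_t\SSS^*_s u\bigr).$$
The relative placement of $\PPP_s$ and $\SSS_s$ depends on whether $u$ is first truncated to $[0,t+s]$. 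I would first rewrite these in the shifted-output form appropriate for $\ULax$: apply $\Sscr_{-(t+s)}$ to the output equations and use $\Sscr_{-t-s}=\Sscr_{-t}\Sscr_{-s}$. This places the newest output segment on $[-(t+s),0]$ of $\Yscr$.

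Next I would compute $\ULax_t\ULax_s$ applied to a triple $(y,x,u)\in\Yscr\times X\times\Uscr$. First, $\ULax_s(y,x,u)=\bigl(\Sscr_{-s}(y+\Psi_s x+\fline_s u),\ \tline_s x+\Phi_s u,\ \SSS_s^*u\bigr)$. Applying $\ULax_t$ to this triple, the state component is
$$\tline_t\tline_s x+\tline_t\Phi_s u+\Phi_t\SSS^*_s u=\tline_{t+s}x+\Phi_{t+s}u.$$
The input component is $\SSS_t^*\SSS_s^*u=\SSS^*_{t+s}u$. The output component is
$$\Sscr_{-t}\bigl[\Sscr_{-s}(y+\Psi_s x+\fline_s u)+\Psi_t(\tline_s x+\Phi_s u)+\fline_t\SSS_s^*u\bigr].$$
Pulling out $\Sscr_{-t-s}$ leaves $y+\Psi_s x+\fline_s u+\Sscr_s\Psi_t\tline_s x+\Sscr_s(\Psi_t\Phi_s u+\fline_t\SSS^*_s u)$. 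By the functional equations this equals $y+\Psi_{t+s}x+\fline_{t+s}u$, which gives $\ULax_{t+s}$. I also need to check that each $\ULax_t$ maps into $\Yscr\times X\times\Uscr$, i.e.\ that the output component still vanishes on $(0,\infty)$. This holds because $y+\Psi_t x+\fline_t u$ is supported in $(-\infty,t]$ and is then shifted left by $t$. Together with $\ULax_0=I$, this establishes the semigroup property. Boundedness of each $\ULax_t$ is immediate since all the blocks are bounded.

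For strong continuity I would argue that $\|\ULax_t\|$ is bounded on $[0,1]$, because $\|\Sigma_t\|\le c_1$ and the shifts are isometries or contractions. It then suffices to prove $\ULax_t w\to w$ for $w$ in a dense set, or even to prove it separately for each component. The pieces $\Sscr_{-t}y\to y$ and $\SSS^*_t u\to u$ converge by the strong continuity of shifts on $L^2$. The term $\tline_t x\to x$ converges by strong continuity of $\tline$. The remaining terms are $\|\Phi_t u\|$, $\|\Psi_t x\|_{L^2}$ and $\|\fline_t u\|_{L^2}$. Since $\Psi_t x$ and $\fline_t u$ are supported on $[-t,0]$ after the shift, their norms are $\le\|\Psi_t x\|$ and $\le\|\fline_t u\|=\|\fline_t\PPP_t u\|\le c_1\|\PPP_t u\|\to0$. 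For $\Psi_t x$ I would use $\|\Psi_t x\|\le\|\Psi_1\PPP_t\ldots\|$, i.e.\ the $L^2$ norm of the output of $x$ restricted to $[0,t]$, which tends to $0$ by dominated convergence because $\Psi_1x\in L^2[0,1]$. Similarly, $\|\Phi_t u\|=\|\Phi_t\PPP_t u\|\le c_1\|\PPP_tu\|\to0$ by causality.

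The main obstacle is purely bookkeeping. The functional equations in \citep{StWe02} are stated with specific conventions (outputs on $[0,\tau]$, truncations $\PPP_\tau$, and which shift acts where). Matching them exactly to the combination $\Sscr_{-t}$, $\SSS^*_t$ used in $\ULax_t$ requires care with signs and supports. I would handle this by writing every output as a function on $\rline$ and checking the support intervals $[-(t+s),-t]$ and $[-t,0]$ explicitly.
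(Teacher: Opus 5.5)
Your direct verification is correct and is the standard argument: the paper gives no proof of its own and simply quotes the result from \citep{StWe02}, and checking the semigroup property via the functional equations (e.g.\ $\Phi_{t+s}u=\tline_t\Phi_s u+\Phi_t\SSS_s^*u$) and strong continuity term by term at $t=0$ is exactly how it is established. One small slip: $\|\Sigma_t\|\le c_1$ for $t\in[0,1]$ is false in general (take $X=\rline$, $\tline_t=e^{-t}$, $B=0$, $C=0$, $D=0$, so $\|\Sigma_t\|=e^{-t}$), but what you actually use, namely $\|\Phi_t\|\le\|\Phi_1\|$ and $\|\fline_t\|\le\|\fline_1\|$ for $t\le 1$, does follow from the functional equations, since $\Phi_t v=\Phi_1\SSS_{1-t}v$ and $\fline_t u=\PPP_t\fline_1 u$.
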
}

If we take $y_0\in\Yscr$, $x_0\in X$ and $u_0\in\Uscr$ to represent
the past output function of $\Sigma$ (for $t<0$), its initial state
and its input, then at any time $t\geq 0$, the first component of
$\ULax_t\sbm{y_0 \\ x_0 \\ u_0}$ represents the past output up to
$t$, the second component represents the present state $x(t)$ and the
third component represents the future input that will reach $\Sigma$
after $t$. The operator semigroup $\ULax$ introduced in the last
proposition is called the {\em Lax-Phillips semigroup} of
$\Sigma$. Translating scattering theory into the language of systems
theory was pioneered in \cite{Helt76}. The generator of $\ULax$ can
be characterized as follows: \vspace{-1mm}

{\color{blue}
\begin{proposition} \label{LaxPhillipsGenTh}
Let $\ULax$ be the Lax--Phillips semigroup of the well-posed system
$\Sigma$. We denote the generator of $\ULax$ by $\GothA$, and we use
the notation $\AB$ and $\CD$ from \rfb{Kurds_abandoned}.\vspace{-2mm}

\begin{enumerate}
\item The domain of $\GothA$, $\Dom\GothA$ consists of all the vectors
$\sbm{y_0\\ x_0\\ u_0}\in\Hscr^1((-\infty,0);Y)\times X\times\Hscr^1
((0,\infty);U)$ which satisfy $\sbm{x_0\\ u_0(0)}\in\Dscr(\AB)$ and 
$y_0(0)=\CD\sbm{x_0\\ u_0(0)}$, and on $\Dom\GothA$, $\GothA$ is 
given by \vspace{-4mm}
\begin{equation} \label{GothAact}
   \GothA \bbm{y_0 \\ x_0 \\ u_0} \m=\m
   \bbm{ \vspace{1mm} y_0' \\ \vspace{1mm} \AB \sbm{x_0\\ u_0(0)} \\ 
   u_0'}. \vspace{-3mm}
\end{equation}
\item The following two conditions are equivalent: \vspace{-1mm}

\m\ \ {\rm (a)} $\sbm{y_0 \\ x_0 \\ u_0}\in\Dom{\GothA}$ and
$\sbm{y\\ x\\ u} \m=\m \GothA \sbm{y_0 \\ x_0 \\ u_0}$,

\m\ \ {\rm (b)} $y_0\in\Hscr^1((-\infty,0);Y)$, $x_0\in X$,\\ $u_0
\in\Hscr^1((0,\infty);U)$, $\sbm{x_0\\ u_0(0)}\in\Dscr(\AB)$ and
\vspace{-2mm}
\begin{equation} \label{5*}
   \bbm{x \\y_0(0)} = \bbm{\AB\\ \CD} \bbm{x_0 \\ u_0(0)} ,
   \quad \bbm{y \\ u} = \bbm{y_0' \\ u_0'} \m.
\end{equation}
\end{enumerate}
\end{proposition}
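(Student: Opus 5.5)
We determine the generator directly from the explicit formula for $\ULax_t$ in Proposition \ref{LaxPhilProp}, proving two inclusions and then recording (b) as a rewriting of (a). Throughout we use two facts. First, the generator of the left shift $\SSS^*_t$ on $\Uscr$ is $\frac{\dd}{\dd\xi}$ with domain $\Hscr^1((0,\infty);U)$. Second, $\Sscr_{-t}$ restricted to functions in $\Yscr$, followed by truncation to $(-\infty,0]$, is the left shift on $\Yscr$ with a hole filled in at the right end. For $w=\sbm{y_0\\ x_0\\ u_0}$, the three components of $\ULax_t w$ are:
\begin{itemize}
\item[(i)] the concatenation of $y_0(\cdot+t)$ on $(-\infty,-t]$ with $(\Psi_t x_0+\fline_t u_0)(\cdot+t)$ on $(-t,0]$;
\item[(ii)] $x(t)=\tline_t x_0+\Phi_t u_0$;
\item[(iii)] $\SSS^*_t u_0$.
\end{itemize}

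\emph{Inclusion $\supseteq$.} Take $w$ satisfying the conditions in (i) of the statement. Then $u_0\in\Hscr^1$ and $\sbm{x_0\\ u_0(0)}\in\Dscr(\AB)$, so Proposition \ref{YomKippur} applies. It gives $x\in C^1$ with $\dot x(0)=\AB\sbm{x_0\\u_0(0)}$, and an output $y=\Psi x_0+\fline u_0\in\Hscr^1_\loc$ with $y(0)=\CD\sbm{x_0\\u_0(0)}=y_0(0)$. Hence the concatenated function in (i) is in $\Hscr^1_\loc((-\infty,0];Y)$; the compatibility $y_0(0)=y(0)$ is exactly what prevents a jump at $-t$. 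The difference quotient $\frac1t(\text{first component}-y_0)$ tends in $L^2((-\infty,0];Y)$ to $y_0'$. On $(-\infty,-t]$ this is the usual shift argument. On $(-t,0]$ we compare $y(\cdot+t)$ with $y_0$ near $0$: both are continuous with matching value at $0$, and the contribution is $O(\sqrt t)$ after dividing by $t$. Together with (ii) and (iii), this shows $w\in\Dom\GothA$ and gives formula \rfb{GothAact}.

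\emph{Inclusion $\subseteq$ and part 2.} Let $w\in\Dom\GothA$. The third component of $\ULax_t w$ evolves autonomously by $\SSS^*_t$, so $u_0\in\Hscr^1((0,\infty);U)$ and the third component of $\GothA w$ is $u_0'$. Restricting the first component to $(-\infty,-t_0]$ and letting $t\to0$ gives $y_0\in\Hscr^1((-\infty,0);Y)$ with derivative equal to the first component of $\GothA w$.

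For the state, we subtract an element already known to lie in the domain. By density of $\Dscr(\AB)$ and surjectivity properties, there are elements $w_1=\sbm{\tilde y_0\\ \tilde x_0\\u_0}$ of the set described in (i) with the same $u_0$; for instance, fix $\tilde x_0$ with $\sbm{\tilde x_0\\u_0(0)}\in\Dscr(\AB)$, which exists since $A\tilde x_0=-Bu_0(0)+$ something in $X$ is solvable via $\tilde x_0=(\beta I-A)^{-1}Bu_0(0)$. The difference $w-w_1$ has third component $0$, so its middle component evolves under $\tline$. Hence $x_0-\tilde x_0\in\Dscr(A)$, which gives $\sbm{x_0\\u_0(0)}\in\Dscr(\AB)$ and the state formula.

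It remains to get $y_0(0)=\CD\sbm{x_0\\u_0(0)}$. The first component of $\ULax_t w$ lies in $\Hscr^1$ by the above, so the continuity of this concatenation at $-t$ forces $y_0(0)=y(0)$, where $y(0)$ is computed by Proposition \ref{YomKippur}.

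Part 2 is then just the equivalence of the description in part 1 with \rfb{5*}, read componentwise, so no further argument is needed. The main obstacle is the $L^2$ convergence of the first-component difference quotient, specifically the boundary layer on $(-t,0]$; I would handle it using the $\Hscr^1_\loc$ regularity of $y$ from Proposition \ref{YomKippur}.
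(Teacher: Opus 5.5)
The paper gives no proof of this proposition. It imports the result as a particular case of Theorem 6.3 in \cite{StWe02}. Your argument is therefore a genuinely different, self-contained route, and it is correct in substance.

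You use only the explicit formula of Proposition \ref{LaxPhilProp}, the regular trajectories of Proposition \ref{YomKippur}, and the generators of the shift semigroups. The inclusion $\supseteq$ is a direct difference-quotient computation. For $\subseteq$ you avoid any resolvent or range argument, such as showing that $\lambda I$ minus the candidate operator is onto. Instead you subtract an explicit domain element with the same $u_0$.

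\begin{itemize}
\item With $\tilde x_0=(\beta I-A)^{-1}Bu_0(0)$ one has $A\tilde x_0+Bu_0(0)=\beta\tilde x_0\in X$.
\item Taking, say, $\tilde y_0(\xi)=e^{\xi}\,\CD\sbm{\tilde x_0\\ u_0(0)}$ completes $w_1$. The appeal to ``density of $\Dscr(\AB)$'' is not needed.
\item This reduces the middle component to the semigroup $\tline$.
\item The compatibility condition then follows from the invariance $\ULax_t\Dom{\GothA}\subset\Dom{\GothA}$ together with continuity of the first component at $-t$.
\end{itemize}

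The citation buys brevity and a more general statement. Your proof buys a derivation readable inside the paper that shows where each domain condition comes from.

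One step needs repair: the boundary layer on $(-t,0]$. Continuity of $y$ and $y_0$ with matching values at $0$ is not enough. It only bounds the $L^2(-t,0)$ norm of that piece of the quotient by $\omega(t)/\sqrt t$, where $\omega$ is a modulus of continuity. The rate is also not $O(\sqrt t)$. Using the $\Hscr^1$ regularity of both $y$ and $y_0$, for $\xi\in(-t,0]$ one has
\[
\Bigl\|\tfrac1t\bigl(y(\xi+t)-y_0(\xi)\bigr)\Bigr\| \leq \tfrac{1}{\sqrt t}\Bigl(\|y'\|_{L^2(0,t)}+\|y_0'\|_{L^2(-t,0)}\Bigr).
\]
So this piece, like the subtracted $y_0'$ on $(-t,0)$, is $o(1)$ in $L^2(-t,0)$.

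A cleaner route avoids the boundary layer entirely. Because $y_0(0)=y(0)$, gluing $y_0$ and $y$ gives $\tilde y\in\Hscr^1_\loc$ on $\rline$ with $\tilde y'\in L^2((-\infty,1];Y)$. The first component of $\ULax_t w$ is $\tilde y(\cdot+t)$ restricted to $(-\infty,0]$, and by strong continuity of translation
\[
\Bigl\|\tfrac1t\bigl(\tilde y(\cdot+t)-\tilde y\bigr)-\tilde y'\Bigr\|_{L^2((-\infty,0];Y)} \leq \sup_{0\leq s\leq t}\bigl\|\tilde y'(\cdot+s)-\tilde y'\bigr\|_{L^2((-\infty,0];Y)} \to 0 .
\]
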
}

This proposition has been extracted (as a particular case) from 
Theorem 6.3 in \cite{StWe02}. \vspace{-1mm} 

\begin{remark} \label{GothA_equivalent} \rm
The operator $\GothA$ from \rfb{GothAact} can be written also in the
form \vspace{-3mm}
\begin{equation} \label{GothA_LP}
   \GothA \m=\m \bbm{\left[\frac{\dd}{\dd \xi}\right]_\Yscr & 
   \delta_0\bar{C} & \delta_0 D\delta_0^*\\ 0 & A & B\delta_0^*\\ 0 
   & 0 & \left[\frac{\dd}{\dd \xi}\right]_\Uscr}, \vspace{-1mm}
\end{equation} 
where we have used the splitting of $C\&D$ as in \rfb{CD}, and where
$\left[\frac{\dd}{\dd\xi}\right]_\Yscr$ is the generator of the left
shift semigroup on $\Yscr$, whose domain is $\Yscr_1=\Hscr^1_0(
(-\infty,0);Y)$, and $\left[\frac{\dd}{\dd\xi}\right]_\Uscr$ is the
generator of the left shift semigroup on $\Uscr$, whose domain is
$\Uscr_1=\Hscr^1((0,\infty);U)$. Thus, for any $\varphi\in\Hscr^1
((-\infty,0);Y)$, $\left[\frac{\dd}{\dd\xi}\right]_\Yscr\varphi$ is
in $\Yscr_{-1}$, the dual space of $\Hscr^1((-\infty,0);Y)$:
\vspace{-4mm}
$$ \m\ \ \ \ \ \ \left[\frac{\dd}{ \dd \xi}\right]_\Yscr \varphi 
   \m=\m \varphi' - \delta_0\varphi(0) \m.\vspace{-2mm}$$
This is similar to the discussion in Example 4.2.7 in 
\citep{obs_book}.
\end{remark}

{\color{blue}
\begin{proposition} \label{attack_in_Halle}
We use the notation of the previous two propositions. The following
conditions are equivalent:\vspace{-2mm}
\begin{enumerate}
\item $\Sigma$ is scattering passive.
\item The Lax-Phillips semigroup induced by $\Sigma$ is contractive
(equivalently, $\|\ULax_t\|=1$ for all $t\geq 0$).  
\end{enumerate}
\end{proposition}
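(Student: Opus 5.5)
The plan is to prove both implications directly from the block form of $\ULax_t$ in Proposition \ref{LaxPhilProp}. The key identity is that, for a fixed triple $\sbm{y_0\\ x_0\\ u_0}\in\Yscr\times X\times\Uscr$, the norm of $\ULax_t\sbm{y_0\\ x_0\\ u_0}$ splits into a part that is trivially preserved by the shifts and a part governed by $\Sigma_t$. Concretely, write $x(t)=\tline_t x_0+\Phi_t u_0$ and $\PPP_t y=\Psi_t x_0+\fline_t u_0$. The first component $\Sscr_{-t}(y_0+\PPP_t y)$ is a left shift by $t$ of a function supported on $(-\infty,t]$. This is the past output $y_0$ concatenated with $\PPP_t y$ on $[0,t]$, so its supports are disjoint and, since $\Sscr_{-t}$ is unitary on $L^2(\rline;Y)$, its squared norm is $\|y_0\|^2+\|\PPP_t y\|^2$. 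The third component satisfies $\|\SSS^*_t u_0\|^2=\|u_0\|^2-\|\PPP_t u_0\|^2$. Adding these,
\[
\Bigl\|\ULax_t\sbm{y_0\\ x_0\\ u_0}\Bigr\|^2-\Bigl\|\sbm{y_0\\ x_0\\ u_0}\Bigr\|^2
=\|x(t)\|^2+\|\PPP_t y\|^2-\|x_0\|^2-\|\PPP_t u_0\|^2 .
\]
Here $\PPP_t y$ depends only on $x_0$ and $\PPP_t u_0$, by causality.

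\textbf{(i)$\Rightarrow$(ii).} Scattering passivity means $\|\Sigma_t\|\leq 1$. By the identity, the right-hand side is $\|\Sigma_t\sbm{x_0\\ \PPP_t u_0}\|^2-\|\sbm{x_0\\ \PPP_tu_0}\|^2\leq 0$, so $\|\ULax_t\|\leq 1$. Since $\ULax_0=I$, we get $\|\ULax_t\|\leq 1$ for all $t$. For the parenthetical claim $\|\ULax_t\|=1$, I will take $x_0=0$, $u_0=0$ and $y_0\neq 0$. Then $\ULax_t\sbm{y_0\\0\\0}=\sbm{\Sscr_{-t}y_0\\0\\0}$, which has the same norm as the original vector, so the norm is attained.

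\textbf{(ii)$\Rightarrow$(i).} Given $x_0\in X$ and $v\in L^2([0,\infty);U)$ supported in $[0,t]$, I apply contractivity of $\ULax_t$ to $\sbm{0\\ x_0\\ v}$. The identity gives $\|\Sigma_t\sbm{x_0\\ v}\|^2\leq\|x_0\|^2+\|v\|^2$. Since $\Sigma_t$ acts on $X\times\PPP_t\Uscr$, this is exactly $\|\Sigma_t\|\leq 1$, which is equivalent to \rfb{energy_balance}, i.e.\ scattering passivity, as noted after \rfb{ScaPassive}.

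The only delicate step is the norm computation for the first component. I need to check that the bilateral shift $\Sscr_{-t}$ applied to $y_0+\PPP_t y$ really lands in $\Yscr$ with no overlap between the shifted $y_0$ and $\PPP_t y$, which holds because $y_0$ lives on $(-\infty,0]$ and $\PPP_t y$ on $[0,t]$. I also need to confirm the paper's convention that $\Psi_t,\fline_t$ produce outputs on $[0,t]$ embedded in $L^2(\rline;Y)$ before shifting. Everything else is the bookkeeping above.
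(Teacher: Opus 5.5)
Your proof is correct, but it follows a different route from the paper. The paper does not prove the equivalence at all. It imports it from Proposition 7.2 of \cite{StWe02} and only adds the remark that $\|\ULax_t\|$ cannot be less than $1$ because $\ULax_t$ contains shift blocks.

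Your argument is a self-contained computation. The identity $\|\ULax_t z\|^2-\|z\|^2=\|\Sigma_t w\|^2-\|w\|^2$, with $w=\sbm{x_0\\ \PPP_t u_0}$, shows that at each fixed $t$ the contraction defect of $\ULax_t$ equals that of $\Sigma_t$. This gives both implications at once, even $t$ by $t$. It uses only the causality of $\Phi_t$ and $\fline_t$ and the support bookkeeping for $\Sscr_{-t}$, and you handle both correctly. The citation buys brevity and the link to the generator-level characterizations in \cite{StWe02}; your version makes the mechanism explicit.

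One small point concerns the parenthetical claim $\|\ULax_t\|=1$. Your witness $\sbm{y_0\\ 0\\ 0}$ with $y_0\neq 0$ requires $Y\neq\{0\}$. The paper applies this very proposition with $Y=\{0\}$ in the proof of Theorem \ref{Gen_sol}, so that case matters.

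To cover it, use the input block instead. If $u_0\in\Uscr$ vanishes on $[0,t]$, causality gives $\Phi_t u_0=0$ and $\fline_t u_0=0$. Then $\ULax_t\sbm{0\\ 0\\ u_0}=\sbm{0\\ 0\\ \SSS_t^*u_0}$, and $\|\SSS_t^*u_0\|=\|u_0\|$; this works whenever $U\neq\{0\}$.

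If both $U$ and $Y$ are trivial, the parenthetical is actually false. In that case $\ULax_t=\tline_t$, which can be, for example, $e^{-t}$ on $X=\rline$. So the real content of (ii) is contractivity, and that is all the paper uses later.
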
}

This proposition has been extracted from Proposition 7.2 in
\cite{StWe02}. The fact that $\|\ULax_t\|\leq 1$ is equivalent to
$\|\ULax_t\|=1$ follows from the structure of $\ULax_t$: it contains
blocks that are left shifts, hence $\|\ULax_t\|$ cannot be less than
1.\vspace{-1mm}

\section{Classical and generalized solutions of an abstract 
         differential inclusion} \label{sec5} 

First we recall very briefly some facts about strongly continuous
semigroups of nonlinear operators. For the basics about such
semigroups we refer to \cite{Brezis74, CranPazy, Kato_accr, Show}.
\vspace{-1mm}

Let $Z$ be a real Hilbert space. A {\em strongly continuous semigroup
of nonlinear operators} $\ULax$ acting on $Z$ is defined exactly as in
the linear case, without requiring that the operators are linear.  If
$\ULax$ is such a semigroup, then define the operator \vspace{-2mm}
\begin{equation} \label{operator_minimal}
   \GothA^0 z \m=\m \lim_{t\rarrow 0,\m t>0} \frac{1}{t} \left[ 
   \ULax_t z - z\right] \m, \end{equation}
   \begin{equation}\label{D(operator_minimal)} \Dscr(\GothA^0) \m=\m
   \left\{ z\in Z\ |\ \mbox{the above limit exists} \right\} \m.
   \end{equation} 
Following \cite{CranPazy}, $\GothA^0$ is called the {\em (strong)
generator} of $\ULax$. Very little is known about semigroups of
nonlinear operators at this level of generality. However, there is a
rich body of knowledge about a subclass of such semigroups, those
that are contractive. The semigroup $\ULax$ is called {\em
contractive} if\vspace{-1mm}
$$ \|\ULax_t z_1 - \ULax_t z_2\| \m\leq\m \|z_1-z_2\| 
   \n\n\FORALL z_1,z_2\in Z, \m\m t\geq 0.\vspace{-3mm}$$

{\color{blue}
\begin{theorem} \label{Putin}
Assume that $\ULax$ is contractive. Then the operator $\GothA^0$ from
\rfb{operator_minimal} and \rfb{D(operator_minimal)} is densely
defined and dissipative. This operator $\GothA^0$ has a unique maximal
dissipative extension $\GothA$ (which might be set-valued) with the
same domain $\Dscr(\GothA)=\Dscr(\GothA^0)$. If $z_0\in
\Dscr(\GothA)$, then $\GothA^0z_0$ is the unique element of smallest
norm in the closed and convex set $\GothA z_0$. \vspace{-1mm}

Let $z_0\in\Dscr(\GothA)$. The function $z:[0,\infty)\rarrow Z$
defined by $z(t)=\ULax_tz_0$ is Lipschitz continuous and right
differentiable at every $t\geq 0$. Moreover, for every $t\geq 0$,
it holds that $z(t)\in \Dscr(\GothA)$,\vspace{-2mm}
\begin{equation} \label{d^+z/dt}
   \frac{\dd^+z(t)}{\dd t} \m=\m \GothA^0 z(t),\vspace{-2mm}
\end{equation}
and $\GothA^0 z$ is right continuous at $t$.  
\end{theorem}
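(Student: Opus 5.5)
This result is the Crandall--Pazy theorem on contraction semigroups in Hilbert space. I would reproduce its standard argument in four steps: dissipativity of $\GothA^0$, density of its domain, construction of the maximal extension, and regularity of orbits.

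\emph{Dissipativity.} For $z_1,z_2\in\Dscr(\GothA^0)$, contractivity gives $\|\ULax_t z_1-\ULax_t z_2\|^2\leq\|z_1-z_2\|^2$. Expand
$$\|\ULax_t z_1-\ULax_t z_2\|^2-\|z_1-z_2\|^2 = 2\langle (\ULax_tz_1-z_1)-(\ULax_tz_2-z_2),z_1-z_2\rangle + \|(\ULax_tz_1-z_1)-(\ULax_tz_2-z_2)\|^2 .$$
Divide by $t$ and let $t\to 0^+$. The last term is $O(t^2)$, so it disappears, and we get $\langle \GothA^0z_1-\GothA^0z_2,z_1-z_2\rangle\leq 0$.

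\emph{Density.} I would use the Hilbert-space structure: reflexivity and the weak-limit arguments of Crandall--Pazy / Brezis. Fix $z$ and set $z_h=\frac1h\int_0^h\ULax_s z\,\dd s$. This does not directly belong to the domain, because the semigroup is nonlinear. So I would show instead that the set $\{z : \liminf_{t\to0}\|\ULax_tz-z\|/t<\infty\}$ is dense. For such $z$, contractivity gives $\|\ULax_{t+h}z-\ULax_tz\|\leq\|\ULax_hz-z\|$, so $t\mapsto \ULax_tz$ is Lipschitz. In a Hilbert space a Lipschitz function is differentiable a.e., which produces many points $\ULax_t z$ at which the orbit is differentiable. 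One then checks that the derivative is in fact the right derivative of the semigroup, i.e.\ that $\ULax_tz\in\Dscr(\GothA^0)$; this is the content of the Crandall--Pazy theorem. Since $\ULax_tz\to z$, this yields density. I expect this step, passing from a.e.\ differentiability of the orbit to membership in $\Dscr(\GothA^0)$, to be the \textbf{main obstacle}. It requires the lemma that if $\ULax_h z - z = O(h)$ then $\GothA^0$ is defined at $\ULax_t z$ for a.e.\ $t$, together with weak-compactness arguments. Here I would simply cite \cite{CranPazy} and \cite{Brezis74}.

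\emph{Maximal extension.} Define $\GothA z_0=\{w : \langle w-\GothA^0 z,\,z_0-z\rangle\leq 0\ \forall z\in\Dscr(\GothA^0)\}$, i.e.\ the largest dissipative extension with the same domain. Combined with the range condition $\mathrm{Ran}(I-\lambda\GothA)=Z$, obtained from the resolvent built via the semigroup and a Minty-type argument, this gives maximal dissipativity. Uniqueness follows because a maximal dissipative operator with dense domain is determined by its minimal section. The claim that $\GothA^0z_0$ is the minimal-norm element of $\GothA z_0$ follows from comparing with the semigroup that $\GothA$ generates via Kōmura--Kato theory; that semigroup coincides with $\ULax$.

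\emph{Regularity of orbits.} For $z_0\in\Dscr(\GothA)$, the orbit $z(t)=\ULax_tz_0$ is Lipschitz with constant $\|\GothA^0 z_0\|$. By Kōmura--Kato theory, $z(t)\in\Dscr(\GothA)$ for all $t$, the right derivative exists everywhere and equals $\GothA^0z(t)$, and $t\mapsto\GothA^0z(t)$ is right continuous. The key ingredients are that $t\mapsto\|\GothA^0z(t)\|$ is nonincreasing and that $\GothA$ is demiclosed.
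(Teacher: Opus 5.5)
The paper proves nothing itself here. It only cites results of \cite{CranPazy}, so deferring to that reference is legitimate, and your dissipativity computation is correct. As a sketch, though, your argument has its gap in the wrong place. The step you single out as the main obstacle is immediate: if $\tau\mapsto\ULax_\tau z$ is differentiable at $t$, then $\ULax_{t+h}z=\ULax_h(\ULax_tz)$ shows that the limit in \rfb{operator_minimal} exists at the point $\ULax_tz$. That is precisely $\ULax_tz\in\Dscr(\GothA^0)$, and no weak-compactness argument is needed. The genuinely hard step is the one you state with no justification: that $\hat D=\{z:\liminf_{t\to0}\|\ULax_tz-z\|/t<\infty\}$ is dense in $Z$. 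For a nonlinear semigroup there is no elementary way to produce points of $\hat D$ near a given $z$ (the averages $\frac1h\int_0^h\ULax_sz\,\dd s$ fail, as you note). This density is exactly the Hilbert-space-specific result the citation must supply, and everything after your Step 2 depends on it.

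Step 3 is circular as written. You give no reason why $\{w:\langle w-\GothA^0z,z_0-z\rangle\leq 0\ \forall z\in\Dscr(\GothA^0)\}$ defines a dissipative operator. For this polar set, dissipativity is equivalent to $\GothA^0$ having a unique maximal dissipative extension, which is part of the claim. There is also no Laplace-transform formula for the resolvent of a nonlinear semigroup, so the range condition cannot come from ``the resolvent built via the semigroup''.

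Once density is known, a non-circular route uses only tools already stated in the paper:
\begin{enumerate}
\item Take any maximal dissipative extension $\GothA$ of $\GothA^0$, which exists by Zorn's lemma. Its domain is dense, so by Theorems \ref{Crandall-Pazy} and \ref{Kato} it generates a contraction semigroup $\ULax'$. The strong generator of $\ULax'$ is the minimal section of $\GothA$, and its orbits from $\Dscr(\GothA)$ are strong solutions of $\dot z\in\GothA z$.
\item For $z_0\in\Dscr(\GothA^0)$ the orbit $t\mapsto\ULax_tz_0$ is Lipschitz, hence differentiable a.e. By the observation above, its derivative is $\GothA^0\ULax_tz_0\in\GothA\,\ULax_tz_0$. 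So it is also a strong solution, and by dissipativity of $\GothA$ it coincides with $\ULax'_tz_0$.
\item By density and continuity $\ULax=\ULax'$. Hence $\GothA^0$ is the minimal section of $\GothA$ and $\Dscr(\GothA^0)=\Dscr(\GothA)$.
\item The same reasoning applies to every maximal dissipative extension, so uniqueness follows from the fact you quote: a maximal dissipative operator is determined by its minimal section. The regularity of orbits then follows from K\=omura--Kato theory, as in your last step.
\end{enumerate}
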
}\vspace{-1mm}

In \rfb{d^+z/dt} $\frac{\dd^+}{\dd t}$ denotes the right derivative.
By $\GothA^0$ being densely defined we mean of course that $\Dscr
(\GothA^0)$ is dense in $Z$. The above theorem follows from Theorems
1.3, 1.5 and A1 as well as Corollary 3.1 in \cite{CranPazy}. In the
linear case, $\GothA^0=\GothA$. \vspace{-1mm}

{\color{blue}
\begin{theorem}[Crandall-Pazy] \label{Crandall-Pazy}
Let $\GothA$ be a maximal dissipative set-valued operator on $Z$ with
domain $\Dscr(\GothA)$ dense in $Z$. For each $z_0\in\Dscr(\GothA)$
let $\GothA^0z_0$ denote the element of smallest norm in $\GothA
z_0$. Then there is a unique strongly continuous semigroup of
nonlinear operators $\ULax$ acting on $Z$ such that $\GothA^0$ is the
generator of $\ULax$. Moreover, $\ULax$ is contractive.
\end{theorem}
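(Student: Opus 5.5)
The Crandall-Pazy theorem is a cited classical result, so the natural route is the standard one: build the semigroup from the resolvents of $\GothA$ and pass to the limit in the exponential formula.

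\emph{Step 1: resolvents.} Since $\GothA$ is maximal dissipative, $-\GothA$ is maximal monotone. By Minty's theorem, for every $\lambda>0$ the range of $I-\lambda\GothA$ is all of $Z$. Monotonicity of $-\GothA$ shows that the resolvent $J_\lambda=(I-\lambda\GothA)^{-1}$ is single-valued and nonexpansive on $Z$.

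\emph{Step 2: exponential formula.} For $z_0\in\Dscr(\GothA)$ I will show that $J_{t/n}^n z_0$ converges as $n\to\infty$, uniformly for $t$ in compact intervals. The tool is the Crandall--Liggett estimate
$$\|J_{t/n}^n z_0 - J_{t/m}^m z_0\| \m\leq\m 2t\,\|\GothA^0 z_0\|\Bigl(\tfrac1n-\tfrac1m\Bigr)^{1/2}$$
for $n\leq m$. Its ingredients are the bound $\|J_\mu z-z\|\leq\mu\|\GothA^0z\|$ for $z\in\Dscr(\GothA)$, the resolvent identity $J_\lambda z=J_\mu\bigl(\tfrac{\mu}{\lambda}z+(1-\tfrac{\mu}{\lambda})J_\lambda z\bigr)$, and a double-index recursive inequality for the quantities $a_{k,j}=\|J_\lambda^k z_0-J_\mu^j z_0\|$. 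I expect this combinatorial estimate to be the main obstacle; I would follow the original argument of Crandall and Liggett, or equivalently Kobayashi's. Define $\ULax_t z_0$ as the limit. Because every $J_\lambda$ is nonexpansive, the estimate extends $\ULax_t$ by continuity to the closure of $\Dscr(\GothA)$, which is $Z$ since $\Dscr(\GothA)$ is dense. Contractivity of each $\ULax_t$ follows in the limit, and strong continuity follows from $\|\ULax_t z_0-z_0\|\leq t\|\GothA^0z_0\|$ together with density. The semigroup property I would take from the literature (Crandall--Liggett), since a direct verification is delicate.

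\emph{Step 3: the generator.} This is where the Hilbert space structure is used. For $z_0\in\Dscr(\GothA)$ the trajectory $t\mapsto \ULax_tz_0$ is Lipschitz, and the Hilbert space structure gives it a.e.\ differentiability. Using the limiting integral inequality $\tfrac12\|\ULax_tz_0-v\|^2-\tfrac12\|z_0-v\|^2\leq\int_0^t\langle w,\ULax_sz_0-v\rangle\dd s$ for all $(v,w)$ in the graph of $\GothA$, together with maximality of $\GothA$, I would show that the trajectory is a strong solution of $\dot z\in\GothA z$. By Brezis' regularity theory, its right derivative exists everywhere and equals the minimal-norm element $\GothA^0 z(t)$. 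Hence $\GothA^0$ extends into the strong generator of $\ULax$. Conversely, if $\tfrac1t(\ULax_tz-z)$ converges to some $w$, the same integral inequality shows that $(z,w)$ is monotone-compatible with the graph of $\GothA$. Maximality then gives $z\in\Dscr(\GothA)$, and right differentiability forces $w=\GothA^0z$. So the generator is exactly $\GothA^0$.

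\emph{Step 4: uniqueness.} If another contractive semigroup $\tilde\ULax$ has generator $\GothA^0$, apply Theorem \ref{Putin} to $\tilde\ULax$. Its trajectories from $\Dscr(\GothA)$ satisfy $\dd^+z/\dd t=\GothA^0 z\in\GothA z$. For two such solutions $z$ and $\tilde z$, dissipativity of $\GothA$ gives that $\|z(t)-\tilde z(t)\|^2$ is nonincreasing. Hence the two trajectories from the same initial state coincide on $\Dscr(\GothA)$, and by density and continuity on all of $Z$.
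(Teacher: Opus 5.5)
The paper gives no argument of its own for this statement; it only quotes Theorem I of \cite{CranPazy}. Your proposal is therefore a genuinely different route, and as an outline it is sound. The following steps all check out: nonexpansiveness of $J_\lambda$, the bound $\|J_\mu z-z\|\leq\mu\|\GothA^0 z\|$, and the resolvent identity. So does the integral inequality, which you obtain by summing $\frac12\|z_k-v\|^2-\frac12\|z_{k-1}-v\|^2\leq\lambda\langle w,z_k-v\rangle$ along the implicit scheme and passing to the limit. Differentiating it at points of differentiability and then using maximality is also correct. The converse is right too: if $\frac1t(\ULax_t z-z)\to w$, then $\langle w-w',z-v\rangle\leq 0$ for every $(v,w')$ in the graph, so $z\in\Dscr(\GothA)$. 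Brezis' regularity theorem then correctly gives $\frac{\dd^+z}{\dd t}=\GothA^0z$ at every $t$.

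What the exponential formula buys you is that constructing the semigroup needs only dissipativity and the range condition, so it carries over to Banach spaces; the Hilbert structure enters only in Step~3. The price is the double-index Crandall--Liggett estimate and the semigroup property, both of which you defer. In the Hilbert setting of this paper you can avoid both. Theorem~\ref{Kato} together with Corollary~\ref{C_0_nonlinearSemi} for $\gamma=0$ already produces a contraction semigroup from the strong solutions, and there the semigroup property follows from uniqueness of solutions. Your Step~3 is still indispensable on that route. Theorem~\ref{Kato} only asserts $\frac{\dd^+\psi}{\dd t}\in\GothA\psi$; it does not say that the right derivative is the minimal section, nor that the domain of the strong generator is exactly $\Dscr(\GothA)$.

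One point of scope in Step~4: you prove uniqueness only among \emph{contractive} semigroups. Contractivity of $\tilde{\ULax}$ is what lets you invoke Theorem~\ref{Putin}, which guarantees that trajectories starting in $\Dscr(\GothA)$ stay in $\Dscr(\GothA)$ and are right differentiable at every $t$. It is also what lets you pass from $\Dscr(\GothA)$ to all of $Z$ by density. For a competitor that is merely strongly continuous with generator $\GothA^0$, nothing in your argument prevents its trajectories from leaving $\Dscr(\GothA)$. No right derivative is available there, and the monotonicity argument for $\|z(t)-\tilde{z}(t)\|^2$ then has nothing to work with.

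The statement as printed asserts uniqueness among all strongly continuous semigroups whose generator is $\GothA^0$. The cited Crandall--Pazy result is a statement about semigroups of contractions, with uniqueness in that class. So your version is the one the citation actually supports, but you should state explicitly that this is the uniqueness you establish rather than leave the discrepancy with the printed statement unremarked.
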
 } \vspace{-1mm}

This theorem is due to \cite{CranPazy}, see their Theorem
\RomanNumeralCaps{1}. It is a generalization of the well known
Lumer-Phillips theorem from linear semigroup theory (see for instance
\cite{Engel_Nagel} or \cite{obs_book}).  A related result has been
proved in \cite{CisVel} for time varying nonlinear systems in which
the nonlinear operator is continuously Fr\'echet differentiable. 
\vspace{-1mm}

{\color{blue}
\begin{theorem}[Kato] \label{Kato}
Consider the differential inclusion\vspace{-2mm}
\begin{equation}\label{Kato_theo}
   \frac{\dd^+\psi}{\dd t}(t) \m\in\m \GothA \psi(t)+\gamma \psi(t),
   \vspace{-2mm}
\end{equation}
where $\GothA$ is a maximal dissipative operator on the Hilbert space
$Z$ and $\gamma\geq 0$. For each $\psi_0\in\Dscr(\GothA)$, there is a
unique absolutely continuous $\psi:[0,\infty)\rarrow Z$, such that
$\psi(0)=\psi_0$, $\psi$ is Lipschitz continuous on any finite time
interval, it is right-differentiable, $\psi(t)\in\Dscr(\GothA)$ and
\rfb{Kato_theo} holds.
\end{theorem}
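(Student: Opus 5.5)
The plan is to reduce the inclusion \rfb{Kato_theo} to the Crandall-Pazy theorem by an exponential rescaling. Put $\varphi(t)=e^{-\gamma t}\psi(t)$. Formally, $\psi$ solves \rfb{Kato_theo} if and only if $\varphi$ solves
$$ \frac{\dd^+\varphi}{\dd t}(t) \m\in\m e^{-\gamma t}\GothA\big(e^{\gamma t}\varphi(t)\big). $$
This equation is time-dependent unless $\GothA$ is linear, so for nonlinear set-valued $\GothA$ I do not expect the rescaling to close the argument directly. I would therefore work with $\GothA_\gamma=\GothA+\gamma I$ itself, which is quasi-dissipative: $\GothA_\gamma-\gamma I$ is maximal dissipative, so that $\langle w_1-w_2,z_1-z_2\rangle\leq\gamma\|z_1-z_2\|^2$ for $w_k\in\GothA_\gamma z_k$.

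\textbf{Existence.} I would use the resolvent approach. Maximal dissipativity of $\GothA$ (via Minty's theorem) gives that $(I-\lambda\GothA)^{-1}$ is an everywhere defined contraction for every $\lambda>0$. Hence, for $0<\lambda<1/\gamma$, the map $J_\lambda=(I-\lambda\GothA_\gamma)^{-1}$ is everywhere defined and Lipschitz with constant $(1-\lambda\gamma)^{-1}$. The key steps, in order, are as follows.
\begin{enumerate}
\item Form the Yosida approximations $\GothA_{\gamma,\lambda}=\lambda^{-1}(J_\lambda-I)$. These are Lipschitz, so the ODE $\dot\psi_\lambda=\GothA_{\gamma,\lambda}\psi_\lambda$, $\psi_\lambda(0)=\psi_0$, has a unique global $C^1$ solution by Picard iteration.
\item Establish the uniform bound $\|\dot\psi_\lambda(t)\|\leq e^{\gamma t}\|\GothA^0\psi_0+\gamma\psi_0\|$. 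This follows because $\|\GothA_{\gamma,\lambda}\psi_0\|$ is dominated by the minimal-norm element of $\GothA_\gamma\psi_0$, and because differencing time shifts of the solution with Gronwall controls the growth rate.
\item Establish the Cauchy estimate, which is the standard Brezis/Kato computation: $\frac{\dd}{\dd t}\|\psi_\lambda-\psi_\mu\|^2$ is bounded by $2\gamma\|\psi_\lambda-\psi_\mu\|^2+C(\lambda+\mu)$. This uses $\psi_\lambda-J_\lambda\psi_\lambda=-\lambda\GothA_{\gamma,\lambda}\psi_\lambda$ together with quasi-dissipativity at the points $J_\lambda\psi_\lambda$ and $J_\mu\psi_\mu$.
\end{enumerate}
These steps give uniform convergence $\psi_\lambda\to\psi$ on compact intervals, with $\psi$ Lipschitz there, with constant $e^{\gamma T}\|\GothA^0\psi_0+\gamma\psi_0\|$.

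\textbf{Membership and the right derivative.} Closedness of the graph of the maximal monotone operator in the strong$\times$weak topology gives $\psi(t)\in\Dscr(\GothA)$, since $J_\lambda\psi_\lambda\to\psi$ and $\GothA_{\gamma,\lambda}\psi_\lambda$ is bounded. Lipschitz functions in Hilbert space are a.e. differentiable, and the inclusion then holds a.e. by weak limits. To upgrade to right-differentiability at \emph{every} $t$, with the minimal-section equation as in \rfb{d^+z/dt}, I would repeat the argument of Theorem \ref{Putin} for the quasi-contractive semigroup $S_t\psi_0=\psi(t)$. Right continuity of $t\mapsto\|\GothA^0\psi(t)+\gamma\psi(t)\|$ combined with the monotonicity estimate yields existence of the right derivative and equality to the minimal-norm element, which in particular belongs to $\GothA\psi+\gamma\psi$.

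\textbf{Uniqueness.} This is immediate. If $\psi,\tilde\psi$ are two solutions, then almost everywhere
$$\frac{\dd}{\dd t}\|\psi-\tilde\psi\|^2\leq2\gamma\|\psi-\tilde\psi\|^2,$$
by dissipativity of $\GothA$ applied to the chosen selections. Gronwall then gives $\psi=\tilde\psi$.

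\textbf{Main obstacle.} I expect the hardest part to be the everywhere right-differentiability, not the a.e. statement. There I would first try to reduce to Theorem \ref{Putin} by noting that the proofs of Crandall-Pazy cited there hold verbatim for $\omega$-quasi-contractive semigroups, with $e^{\gamma t}$ factors.
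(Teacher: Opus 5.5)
Your overall route (Yosida approximants of the shifted operator $\GothA+\gamma I$, a uniform bound on $\dot\psi_\lambda$, the Brezis--Kato Cauchy estimate, strong$\times$weak closedness of the graph, Gronwall for uniqueness) is the standard proof of this classical result. The paper gives no argument of its own and only refers to Theorem 4.1 in Showalter's book.

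The gap is in the step you yourself single out as the main obstacle. You treat $\GothA^0\psi+\gamma\psi$ as the element of minimal norm in $\GothA\psi+\gamma\psi$. That is false: translating a closed convex set by $\gamma\psi$ does not translate its minimal-norm element. For a counterexample take $Z=\rline$, $\GothA x=-{\rm sign}(x-1)$ with sign as in \rfb{Noya}, $\gamma>1$ and $\psi_0=1$.
\begin{itemize}
\item Here $\GothA\psi_0+\gamma\psi_0=[\gamma-1,\gamma+1]$ and the solution is $\psi(t)=\frac{1}{\gamma}+\bigl(1-\frac{1}{\gamma}\bigr)e^{\gamma t}$.
\item So $\frac{\dd^+\psi}{\dd t}(0)=\gamma-1$, whereas $\GothA^0\psi_0+\gamma\psi_0=\gamma$.
\item Moreover $\|\GothA^0\psi(t)+\gamma\psi(t)\|=\gamma\psi(t)-1\rarrow\gamma-1$ as $t\downarrow 0$. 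Hence the right continuity of $t\mapsto\|\GothA^0\psi(t)+\gamma\psi(t)\|$, on which you base everywhere right-differentiability, fails.
\end{itemize}
With that quantity as the bound on the difference quotients, a weak cluster point is only known to lie in $\GothA\psi(t)+\gamma\psi(t)$ inside a ball that is too large. It is therefore not identified, and you cannot upgrade weak to strong convergence. Step 2 has the same conflation, but there it only weakens the bound, so it is harmless. Its constants also need a small correction: $\GothA_{\gamma,\lambda}$ is only $\gamma/(1-\lambda\gamma)$-quasi-dissipative, and $\|\GothA_{\gamma,\lambda}\psi_0\|\leq(1-\lambda\gamma)^{-1}\|w\|$ for $w\in\GothA\psi_0+\gamma\psi_0$. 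So your bound holds only in the limit $\lambda\rarrow 0$, which is all you need.

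The repair is to work throughout with the minimal section $(\GothA+\gamma I)^0$ of the shifted operator.
\begin{enumerate}
\item Restart the solution at $\psi(t)\in\Dscr(\GothA)$, which is legitimate by uniqueness, and pass to the limit in Step 2. This gives $\|\psi(t+h)-\psi(t)\|\leq h\,e^{\gamma h}\|(\GothA+\gamma I)^0\psi(t)\|$.
\item For $x\in\Dscr(\GothA)$ and $y\in\GothA x+\gamma x$ you have $\frac12\frac{\dd}{\dd s}\|\psi(s)-x\|^2\leq\langle y,\psi(s)-x\rangle+\gamma\|\psi(s)-x\|^2$ for a.e.\ $s$. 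Integrate over $[t,t+h]$, divide by $h$, and use the maximality of $\GothA$. This shows that every weak cluster point $w$ of $h^{-1}(\psi(t+h)-\psi(t))$ lies in $\GothA\psi(t)+\gamma\psi(t)$.
\item Since also $\|w\|\leq\|(\GothA+\gamma I)^0\psi(t)\|$, $w$ must equal $(\GothA+\gamma I)^0\psi(t)$. The same norm bound then turns weak into strong convergence.
\end{enumerate}
This gives right differentiability at every $t$, with $\frac{\dd^+\psi}{\dd t}(t)=(\GothA+\gamma I)^0\psi(t)$. Right continuity of $t\mapsto\|(\GothA+\gamma I)^0\psi(t)\|$ is then a by-product, not an input. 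The same correction applies if you instead go through a quasi-contractive version of Theorem \ref{Putin}: the relevant minimal section there is that of $\GothA+\gamma I$, not $\GothA^0+\gamma I$.
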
} \vspace{-1mm}

For a proof see Theorem 4.1 in \cite{Show}.\vspace{-1mm}

\begin{definition} \label{NonlinearSemigroupDef}
Let $\GothE :\Dscr(\GothE)\rarrow Z$ be a set valued
operator with $\Dscr(\GothE)$ dense in $Z$. We say that
$\GothE$ {\em determines a strongly continuous semigroup} of
nonlinear operators on $Z$ if there exists a unique such semigroup
$\ULax$ such that: \vspace{-1mm}
\begin{enumerate}[(a)]
\item for each $\psi_0\in\Dscr(\GothE)$, the function
$\psi(t)=\ULax_t\psi_0$ is {\em Lipschitz continuous} on any 
finite time interval and right differentiable for all $t\geq 0$.
\item The above function $\psi$ is such that $\psi(t)\in\Dscr
(\GothE)$ for all $t\geq 0$ and \vspace{-5mm}
\begin{equation} \label{NonlinearGenerator}
   \frac{\dd^+\psi}{\dd t}(t) \m\in\m \GothE\psi(t) \quad 
   \forall t\geq 0.\vspace{-1mm}
\end{equation}
\end{enumerate}
\end{definition}

\begin{corollary} \label{C_0_nonlinearSemi}
Let $Z$, $\GothA$ and $\gamma$ be as in Theorem \ref{Kato} with
$\Dscr(\GothA)$ dense in $Z$. Then the operator $\GothA+\gamma I$
determines a strongly continuous semigroup of nonlinear operators
$\ULax$ on $Z$. Moreover for any $\psi_0,\phi_0\in \Dscr(\GothA)$,
\begin{equation} \label{Zelensky}
   \|\ULax_t\psi_0-\ULax_t\phi_0\| \m\leq\m 
   e^{\gamma t}\|\psi_0-\phi_0\| \quad \forall t\geq 0.\vspace{-2mm}
\end{equation}
\end{corollary}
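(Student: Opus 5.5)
The natural route is to reduce the operator $\GothA+\gamma I$ to a maximal dissipative one by a rescaling that removes the exponential growth. We then apply the Crandall--Pazy theorem (Theorem \ref{Crandall-Pazy}) to obtain a contractive semigroup, and use Kato's theorem (Theorem \ref{Kato}) for the regularity and uniqueness parts of Definition \ref{NonlinearSemigroupDef}.

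\textbf{Step 1 (existence of trajectories).} For each $\psi_0\in\Dscr(\GothA)$, Theorem \ref{Kato} gives a unique $\psi$ with $\psi(0)=\psi_0$. This $\psi$ is Lipschitz on finite intervals, right-differentiable, stays in $\Dscr(\GothA)$, and satisfies \rfb{Kato_theo}. Define $\ULax_t\psi_0=\psi(t)$ on $\Dscr(\GothA)$. Uniqueness gives the semigroup property: $t\mapsto\psi(t+s)$ solves the same inclusion with initial value $\psi(s)\in\Dscr(\GothA)$.

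\textbf{Step 2 (estimate \rfb{Zelensky}).} Take two solutions $\psi,\phi$. The function $t\mapsto\|\psi(t)-\phi(t)\|^2$ is absolutely continuous, since both functions are Lipschitz. At every $t$ its right derivative equals
$$2\langle \eta_1-\eta_2+\gamma(\psi(t)-\phi(t)),\psi(t)-\phi(t)\rangle,$$
where $\eta_1\in\GothA\psi(t)$ and $\eta_2\in\GothA\phi(t)$. Dissipativity of $\GothA$ gives $\langle\eta_1-\eta_2,\psi-\phi\rangle\leq 0$, so this right derivative is at most $2\gamma\|\psi(t)-\phi(t)\|^2$. A right-derivative Gronwall argument (valid for continuous functions) then yields $\|\psi(t)-\phi(t)\|\leq e^{\gamma t}\|\psi_0-\phi_0\|$. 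This is \rfb{Zelensky} on $\Dscr(\GothA)$.

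\textbf{Step 3 (extension and strong continuity).} By \rfb{Zelensky}, each $\ULax_t$ is Lipschitz on the dense set $\Dscr(\GothA)$ with constant $e^{\gamma t}$. Hence it extends uniquely to a Lipschitz map on $Z$, and the semigroup property persists by continuity. For strong continuity at $t=0$, take $z\in Z$ and $\varepsilon>0$. Choose $\psi_0\in\Dscr(\GothA)$ close to $z$. Then $\|\ULax_tz-z\|\leq e^{\gamma t}\|z-\psi_0\|+\|\ULax_t\psi_0-\psi_0\|+\|\psi_0-z\|$, and the middle term tends to $0$ because $\psi$ is continuous. Continuity at other times follows from this together with the semigroup property and the uniform-on-compacts Lipschitz bounds.

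\textbf{Step 4 (uniqueness of the semigroup).} Let $\tilde\ULax$ be any strongly continuous semigroup satisfying (a) and (b) of Definition \ref{NonlinearSemigroupDef} for $\GothE=\GothA+\gamma I$. For $\psi_0\in\Dscr(\GothA)$, the trajectory $\tilde\ULax_t\psi_0$ is a Lipschitz, right-differentiable solution of \rfb{Kato_theo}. By the uniqueness part of Theorem \ref{Kato} it coincides with $\ULax_t\psi_0$. By density and continuity of both semigroups, $\tilde\ULax=\ULax$.

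The main obstacle is the rigorous treatment of the right derivative in Step 2. The inclusion holds only for right derivatives, and we must be sure the selections $\eta_1,\eta_2$ are legitimate. This is fine: the inclusion \rfb{Kato_theo} holds pointwise for every $t$, and a continuous function whose right Dini derivative is bounded by $2\gamma$ times itself satisfies the exponential bound.
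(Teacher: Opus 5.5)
Your Steps 1--4 are essentially the paper's proof written out in full. You define $\ULax_t$ on $\Dscr(\GothA)$ through the solutions from Theorem \ref{Kato}, obtain \rfb{Zelensky} from the dissipativity of $\GothA$ plus a Gronwall argument, extend by density, and check the semigroup property and strong continuity, which the paper only calls ``easy''. You also usefully add the uniqueness required by Definition \ref{NonlinearSemigroupDef}, via the uniqueness part of Kato's theorem. The rescaling and Crandall--Pazy route announced in your opening paragraph is never actually used, and it is not needed.
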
 \vspace{-1mm}

{\em Proof.} \m For $\psi_0\in\Dscr(\GothA)$ we define $\ULax_t
\psi_0=\psi(t)$, where $\psi$ is the function from Theorem \ref{Kato}.
It is easy to derive from \rfb{Kato_theo} that \rfb{Zelensky} holds,
which shows that $\ULax_t\psi_0$ depends continuously on
$\psi_0$. Hence, by density and continuous extension we can define
$\ULax_t\psi_0$ for any $\psi_0\in Z$. The semigroup property
and the strong continuity of $\ULax_t$ are easy to
prove. \ $\square$ \vspace{-1mm}

\begin{definition} \label{AbstractSystem}
Let $X$ and $U$ be Hilbert spaces and $\Uscr=L^2([0,\infty);U)$. An
{\em abstract nonlinear control system} with input space $U$ and state
space $X$ is a family $\Sigma^{\rm st}=(\Sigma^{\rm st}_{\tau})_{\tau
\geq 0}$ of continuous operators from $X\times\Uscr$ to $X$, which
satisfies the following for any $x_0\in X$ and any $u\in\Uscr$:
\vspace{-1mm}\begin{enumerate}[\rm (a)]
\item {\em Composition property:} \vspace{-3mm}
$$ \Sigma^{\rm st}_{t+\tau} \sbm{x_0\\[2mm] u} \m=\m \Sigma^{\rm st}_t
   \bbm{\Sigma^{\rm st}_\tau \sbm{x_0\\[1mm] u} \\ \SSS_\tau^* u}\ \ 
   \forall \ t, \tau\geq 0. \vspace{-1mm}$$
\item {\em Identity property:} $\Sigma^{\rm st}_0\sbm{x_0\\[1mm] u}
   \m=\m x_0$.
\item {\em Continuity property:} $x(t)=\Sigma^{\rm st}_t\sbm{x_0
   \\[1mm] u}$ is a continuous function of \m $t\geq 0$.
\smallskip
\item {\em Causality property:} recalling the notation $\PPP_\tau$ 
for operators of truncation as in Section \ref{sec2}, \vspace{-2mm}
$$ \Sigma^{\rm st}_\tau \sbm{x_0 \\[2mm] \PPP_\tau u} \m=\m 
   \Sigma^{\rm st}_\tau \sbm{x_0 \\[2mm] u} \FORALL \tau\geq 0.$$
\end{enumerate} 
\end{definition} \vspace{-1mm}

Note that property (d) implies that the value of the state
$x(\tau)=\Sigma^{\rm st}_\tau\sbm{x_0\\ u}$ does not depend on the
future values (meaning for $t>\tau$) of the input signal $u$.
Properties (a) and (d) above can be compressed into one as
follows: \vspace{-2mm}
\begin{equation} \label{comp+causal}
   \Sigma^{\rm st}_{t+\tau}\sbm{x_0 \\[2mm] \displaystyle u
   \ICON_\tau v} \m=\m \Sigma^{\rm st}_t\bbm{\Sigma^{\rm st}_\tau
   \sbm{x_0\\[1mm] u}\\ v}\ \ \ \ \forall\ t,\tau\geq 0. 
   \vspace{-2mm}
\end{equation}
Here we have used the usual notation for the $\tau$-{\em 
concatenation} of two functions $u,v\in\Uscr$: \vspace{-3mm}
\begin{equation}\label{concat}
   ({\displaystyle u\ICON_\tau v})(t) \m=\m \left\{\begin{array}{c} 
   u(t)\\v(t-\tau)\end{array} \n \begin{array}{c} \ \ \ \m 
   \text{for}\quad t\in [0,\tau),\\ \text{for}\quad  t\geq \tau.
   \end{array}\right. \vspace{-1mm}
\end{equation} 
It is clear that \rfb{comp+causal} implies the composition property,
by taking $v=\SSS_\tau^* u$, and also the causality property, by 
taking $t=0$ and $v=0$. It is easy to see that (a) and (d) imply
\rfb{comp+causal}. \vspace{-1mm}

Our definition above is a special case of the one given in
\cite[Definition 1.3]{Mironchenko}, where the space of control inputs
is not specified to be $L^2$, it merely has to be a normed space on
which left shifts are contractions and concatenations are
allowed. Moreover, the definition in \cite{Mironchenko} allows for
blow-up of the state trajectories in finite time. \vspace{-1mm}

For the remainder of this section we study the differential inclusion
\rfb{Boris}. We must clarify what we mean by classical and generalized
solutions of \rfb{Boris}. As in Section \ref{sec4}, $U$ and $X$ will
denote real Hilbert spaces and $\Uscr=L^2([0,\infty);U)$. 
\vspace{-2mm}

\begin{definition} \label{Clas_Sol}
Assume that $A$ is the generator of the strongly continuous (linear)
semigroup $\tline$ on $X$ with domain $\Dscr(A)$, $\Mscr$ is a maximal
monotone (set-valued) operator defined on all of $X$ and
$B\in\Lscr(U;X_{-1})$. \vspace{-5mm}

\m\ \ The pair $(x,u)$ is called a {\em classical solution} of 
\rfb{Boris} if \vspace{-2mm}
\begin{enumerate}[\rm (a)]
\item $x\in C([0,\infty);X)$ and $x$ is right differentiable for all
      $t\geq 0$,
\item $u\in\Hscr^1((0,\infty);U)$,
\item $Ax(t)+Bu(t)\in X$ for all $t\geq 0$ (which implies 
      $x(t)\in Z$),
\item the inclusion \rfb{Boris} holds for all $t\geq 0$.\\
The pair $(x,u)$ is called a {\em generalized solution} of 
\rfb{Boris} if 
\item $x\in C([0,\infty);X)$,
\item $u\in L^2([0,\infty);U)$,
\item there exists a sequence $(x_n,u_n)$ of classical solutions of
\rfb{Boris}, such that $x_n(t)\rarrow x(t)$ in $X$ for all $t\geq 0$,
and $u_n\rarrow u$ in $L^2([0,\infty);U)$.
\end{enumerate}
\end{definition}\vspace{-1mm}

The following theorem gives a sufficient condition for \rfb{Boris}
to have classical and generalized solutions. \vspace{-1mm}

{\color{blue}
\begin{theorem} \label{Gen_sol}
With the assumptions and the notation of Definition \ref{Clas_Sol},
assume that there exists $\l\geq 0$ and $\beta>0$ such that for any
$\sbm{x_0\\u_0}\in\Dscr(\AB)$ (as defined in \rfb{D(AB)}),
\vspace{-2mm}
\begin{equation} \label{Lax-Phillips_like}
   \left\langle\bbm{A-\l I & B \\ 0 & -\beta I} \bbm{x_0\\u_0},
   \bbm{x_0\\u_0}\right\rangle \leq\m 0.\vspace{-2mm}
\end{equation}
Then for any $u\in\Hscr^1((0,\infty);U)$ and any $x_0\in X$ such that
$Ax_0+Bu(0)\in X$, there exists a unique function $x:[0,\infty)
\rarrow X$ such that $(x,u)$ is a classical solution of \rfb{Boris}
and $x(0)=x_0$. These classical solutions determine a unique abstract
nonlinear control system $\Sigma^{\rm st}$ with the input space $U$
and the state space $X$, in the sense that (for $x_0$ and $u$ as
above) \vspace{-3mm}
$$ x(t) \m=\m \Sigma^{\rm st}_t \sbm{x_0\\[1mm] u} \FORALL t\geq 0.
   \vspace{-2mm}$$
Let $x_0,z_0\in X$, $u,v\in\Hscr^1((0,\infty);U)$, let $(x,u)$ be a
classical solution of \rfb{Boris} with $x(0)=x_0$ and let $(z,v)$ be
a classical solution of \rfb{Boris} with $z(0)=z_0$. Then for all
$t\geq 0$, \vspace{-2mm}
\begin{equation} \label{Sol_inequ}
   \|x(t)-z(t)\|^2 \m\leq\m \hspace{48mm} \m\vspace{-3mm}
\end{equation} \vspace{-3mm}
$$ \m\ \ \ \ e^{2\l t} \left[\|x_0-z_0\|^2 +  2\beta\int^t_0 e^{-2\l 
   \sigma}\|u(\sigma)-v(\sigma)\|^2 \dd\sigma\right].$$
\end{theorem}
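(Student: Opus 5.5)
The plan is to recast \rfb{Boris} as a Lax--Phillips type evolution on a product space, in which the future input is carried along by the left shift. We then obtain existence from Corollary \ref{C_0_nonlinearSemi} and the estimate \rfb{Sol_inequ} from a direct energy computation.

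\textbf{Step 1: the state--input operator.} Let $\Uscr_w$ be the space $L^2([0,\infty);U)$ with the weighted norm
$$\|u\|_w^2 = 2\beta\int_0^\infty e^{-2\l\xi}\|u(\xi)\|^2\dd\xi .$$
This norm is equivalent to the usual one on $\Uscr$ when $\l=0$; for $\l>0$ I would work in the completion, which contains $\Uscr$. Put $\Hscr=X\times\Uscr_w$ and define the set-valued operator
$$\GothE\sbm{x_0\\ u_0}=\bbm{Ax_0+Bu_0(0)-\Mscr(x_0)\\ u_0'},$$
with domain
$$\Dscr(\GothE)=\left\{\sbm{x_0\\ u_0}\ \Big|\ u_0\in\Hscr^1((0,\infty);U),\ Ax_0+Bu_0(0)\in X\right\}.$$
This domain is dense, because $\Dscr(\AB)$ is dense in $X\times U$ and we may adjust $u_0(0)$ freely.

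\textbf{Step 2: $\GothE-\l I$ is dissipative.} Integration by parts gives
$$\langle u',u\rangle_w=-\beta\|u(0)\|^2+\l\|u\|_w^2 .$$
Apply \rfb{Lax-Phillips_like} to the difference of two elements of $\Dscr(\GothE)$; this is legitimate because $\AB$ is linear, so the difference lies in $\Dscr(\AB)$. Combined with the monotonicity of $\Mscr$, this yields
$$\langle \GothE\psi_1-\GothE\psi_2,\psi_1-\psi_2\rangle_{\Hscr}\leq \l\|\psi_1-\psi_2\|_{\Hscr}^2 .$$
Indeed, the $-\beta\|u_1(0)-u_2(0)\|^2$ produced by the shift term cancels the $+\beta\|u_1(0)-u_2(0)\|^2$ allowed by \rfb{Lax-Phillips_like}.

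\textbf{Step 3: maximality.} For $\mu>\l$ and $(f,g)\in\Hscr$ we must solve $(\mu I-\GothE)\sbm{x\\u}\ni\sbm{f\\g}$. The input equation $\mu u-u'=g$ has the explicit solution
$$u(\xi)=\int_\xi^\infty e^{-\mu(s-\xi)}g(s)\dd s ,$$
which lies in $\Hscr^1$ (with the weight) because $\mu>\l$. It remains to solve
$$\mu x-(Ax+Bu(0))+\Mscr(x)\ni f .$$
Writing $x=w+(\mu I-A)^{-1}Bu(0)$ reduces this to
$$(\mu I-A)w+\Mscr\bigl(w+(\mu I-A)^{-1}Bu(0)\bigr)\ni f .$$
Here \rfb{Lax-Phillips_like} with $u_0=0$ shows that $\l I-A$ is maximal monotone (a Lumer--Phillips argument). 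The translated $\Mscr$ is still maximal monotone with $\Dscr=X$. Since $\Mscr$ is everywhere defined, Rockafellar's sum theorem (the interior of $\Dscr(\Mscr)$ meets $\Dscr(A)$) shows that the sum is maximal monotone, and adding $(\mu-\l)I$ makes it surjective by Minty's theorem. I expect this step to be the main obstacle: one must check carefully that the translation by $(\mu I-A)^{-1}Bu(0)\in X$ keeps us in the setting of the sum theorem, and that the resulting $x$ satisfies $Ax+Bu(0)\in X$.

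\textbf{Step 4: solutions, the estimate and $\Sigma^{\rm st}$.} Since $\GothE-\l I$ is maximal dissipative, Theorem \ref{Kato} and Corollary \ref{C_0_nonlinearSemi} (with $\gamma=\l$) give, for each $\sbm{x_0\\u}\in\Dscr(\GothE)$, a Lipschitz, right-differentiable trajectory. Its second component solves $\dot v=v'$, $v(0)=u$, so $v(t)=\SSS_t^*u$ and $v(t)(0)=u(t)$. Hence the first component is a classical solution of \rfb{Boris}.

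For \rfb{Sol_inequ} I would argue directly rather than from the weighted contraction. Let $e=x-z$. The right derivative of $\|e\|^2$ is
$$2\langle \dot x-\dot z,e\rangle\leq 2\l\|e\|^2+2\beta\|u-v\|^2 ,$$
again by \rfb{Lax-Phillips_like} applied to $\sbm{e(t)\\u(t)-v(t)}$ and monotonicity of $\Mscr$. A right-derivative Gronwall inequality, valid since $\|e\|^2$ is continuous, then gives \rfb{Sol_inequ} exactly. Uniqueness of classical solutions follows by taking $u=v$ and $x_0=z_0$.

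Finally, \rfb{Sol_inequ} shows that $(x_0,u)\mapsto x(t)$ is uniformly continuous on the dense set of admissible data, locally uniformly in $t$. This allows us to extend it to a continuous map $\Sigma^{\rm st}_t$ on $X\times\Uscr$. Properties (a)--(d) of Definition \ref{AbstractSystem} hold for classical solutions, by uniqueness together with time invariance and causality of the inclusion; we would verify them via \rfb{comp+causal}, using that concatenations of $\Hscr^1$ inputs are $\Hscr^1$ when they match at $\tau$. These properties then pass to the limit by continuity.
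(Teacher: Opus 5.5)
Your proposal is correct, but it reaches the result by a different route from the paper.

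\emph{The paper's route.} The paper rescales the norm on $U$ by $2\beta$, so that \rfb{Lax-Phillips_like} becomes the scattering passivity condition of Proposition \ref{ScatteringPassive} for the linear system with generator $A-\l I$, control operator $B$ and output space $\{0\}$. Maximal dissipativity of the linear Lax--Phillips generator $\Ascr$ then comes from the linear theory (Proposition \ref{attack_in_Halle} and Theorem \ref{Putin}). Rockafellar's sum theorem is applied on $X\times\Uscr$ to $\Ascr-\tilde{\Mscr}$, with $\tilde{\Mscr}=\sbm{\Mscr & 0\\ 0 & \l I}$. $\Sigma^{\rm st}_t$ is defined as the first component of the resulting nonlinear semigroup, so the composition, identity and continuity properties come for free. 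Causality is proved separately, by comparing the trajectories driven by $v$ and by $\PPP_\tau v$.

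\emph{Your route.} You verify dissipativity by a direct integration by parts and prove the range condition by hand. The triangular structure lets you solve the input equation explicitly. The substitution $x=w+(\mu I-A)^{-1}Bu(0)$ then reduces the state equation to an inclusion for $(\mu-\l)I+(\l I-A)+\Mscr(\cdot+c)$ on $X$ alone, where the sum theorem and Minty's theorem apply. The step you flagged as the main obstacle is fine: for $c=(\mu I-A)^{-1}Bu(0)$ one has $Ac+Bu(0)=\mu c$. Hence $Ax+Bu(0)=Aw+\mu c$, which lies in $X$ exactly when $w\in\Dscr(A)$. Building $\Sigma^{\rm st}$ by continuous extension from \rfb{Sol_inequ} also makes causality nearly immediate, because the right-hand side of \rfb{Sol_inequ} at time $t$ involves the inputs only on $[0,t]$. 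The paper's causality argument is in effect a special case of this estimate.

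\emph{Trade-offs.} Your route is self-contained: it avoids the linear Lax--Phillips machinery, and it handles right derivatives in the Gronwall step (and hence uniqueness among all classical solutions) more carefully than the paper. The paper's route is shorter given that machinery, and it sets up the template reused, with outputs, in Theorem \ref{Hans}.

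One inconsistency should be repaired: the weight $e^{-2\l\xi}$ is unnecessary, and with it your stated domain is wrong. In the weighted completion with $\l>0$, take $g(s)=e^{\l s/2}u_*$. For $\mu>\l$, the only solution of $\mu u-u'=g$ in that space is a multiple of $e^{\l\xi/2}u_*$, which is not in $\Hscr^1((0,\infty);U)$. So with the unweighted $\Hscr^1$ domain the range condition fails, and you would need the weighted $\Hscr^1$ space as the domain. It is simpler to keep the unweighted norm $2\beta\|u\|^2_{L^2}$, which is what the paper's rescaling amounts to. Then $\langle u',u\rangle=-\beta\|u(0)\|^2$, and the same computation gives $\langle\GothE\psi_1-\GothE\psi_2,\psi_1-\psi_2\rangle\leq\l\|x_1-x_2\|^2$. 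The resolvent of the input equation lies in $\Hscr^1$ for every $\mu>0$, and the rest of your argument, with $\mu>\l$, goes through unchanged.
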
}

\begin{remark} \label{equi_cond1_re} \rm
The inequality \rfb{Lax-Phillips_like} is clearly equivalent to
\vspace{-2mm}
\begin{equation} \label{equi_cond1}
   \langle Ax_0+Bu_0,x_0\rangle \m\leq\m \l\|x_0\|^2+\beta
   \|u_0\|^2 \quad \forall \sbm{x_0\\[1mm] u_0}\in \Dscr(A\&B).
\end{equation}
A more sophisticated equivalent estimate is \rfb{Equiv_cond}.
\end{remark}

\begin{remark} \rm The estimate \rfb{Sol_inequ} shows the continuity
of the classical solutions of \rfb{Boris} with respect to the initial
state and the input function. 
\end{remark}

{\em Proof of Theorem \ref{Gen_sol}.} \m Let us define
a new inner product on $U$ by \vspace{-1mm}
$$\langle u, v\rangle_{new} \m=\m 2\beta\langle u, v\rangle,$$
with the corresponding norm $\|u\|_{new}$. Then 
\rfb{Lax-Phillips_like} becomes (in terms of the new inner product on
$X\times U$) \vspace{-2mm}
$$ \left\langle \bbm{A-\l I & B\\ 0 & -\half I} \bbm{x_0\\ v_0},
   \bbm{x_0\\ v_0} \right\rangle_{\n\m new} \m\leq\m 0.
   \vspace{-2mm}$$
This implies that the system $\Sigma$ determined by $\AB-[\l\ 0]$ and
$Y=\{0\}$ (hence $\CD=0$) is scattering passive with state space $X$
and input space $U$ (with the new norm), see Proposition
\ref{ScatteringPassive}. The state trajectories of $\Sigma$ are
described by \ $\dot{z}(t)=(A-\l I)z(t)+Bv(t)$, for all $t\geq 0$,
where $v\in\Uscr=L^2([0,\infty);U)$. Recall from Section \ref{sec3} 
that $\delta_0^*$ denotes the operator of point evaluation at $0$.
The generator $\Ascr$ of the Lax-Phillips semigroup of the
system $\Sigma$ is \vspace{-2mm}
\begin{equation} \label{linearOp_A}
   \Ascr \m=\m \bbm{A-\l I & B\delta_0^* \\0 & \frac{\dd}{\dd\xi}}.
   \vspace{-2mm}
\end{equation} 
This follows from \rfb{GothA_LP} with $Y=\{0\}$. The domain of $\Ascr$
is \vspace{-2mm}
$$ \Dscr(\Ascr) \m=\m \left\{ \sbm{z_0\\ v_0}\in X\times\Hscr^1((0,
   \infty);U) \ |\ Az_0 + Bv_0(0)\in X \right\} \vspace{-2mm}$$
(see Proposition \ref{LaxPhillipsGenTh}). The operator $\Ascr$ is 
maximal dissipative and densely defined on $X\times\Uscr$, according
to Proposition \ref{attack_in_Halle} and Theorem \ref{Putin} (the
linear version). \vspace{-1mm}

On replacing $A$ with $A-\Mscr$ and $\frac{\dd}{\dd \xi}$ with
$\frac{\dd}{\dd\xi}-\lambda I$ in \rfb{linearOp_A}, where $\Mscr$ is a
nonlinear (possibly multi-valued) maximal montone operator defined on
all of $X$, we obtain the nonlinear operator \vspace{-4mm}
\begin{equation}\label{split_Ascr}
   \Ascr^\Mscr \m=\m \underbrace{\bbm{A-\l I & B\delta_0^* \\0 & 
   \frac{\dd}{\dd\xi}}}_{\text{$\Ascr$}} - \underbrace{\bbm{\Mscr & 
   0\\ 0 & \l I}}_{\text{$\tilde{\Mscr}$}}, \vspace{-2mm}
\end{equation} 
with $\Dscr(\Ascr^\Mscr)=\Dscr(\Ascr)$. Clearly $\tilde{\Mscr}=\sbm
{\Mscr & 0\\ 0 & \l I}\vspace{1mm}$ is a maximal monotone operator
defined on all of $X\times\Uscr$, therefore $\Dscr(\Ascr)\cap\left(
{\rm int}\Dscr(\tilde{\Mscr})\right)=\Dscr(\Ascr)$, which is dense.
According to Theorem 1 of \cite{Rocka}, it follows that $\Ascr^\Mscr$
is {\em maximal dissipative and densely defined} on $X\times\Uscr$. It
follows from Corollary \ref{C_0_nonlinearSemi} that the operator
$\Ascr_0=\Ascr^\Mscr+\l I$ determines strongly continuous semigroup
$\ULax$ on $X\times\Uscr$. Clearly \vspace{-2mm}
$$ \Ascr_0 \m=\m \bbm{A-\Mscr & B\delta_0^* \\ 0 & \frac{\dd}{\dd 
   \xi}}, \quad \Dscr(\Ascr_0)=\Dscr(\Ascr).\vspace{-2mm}$$
Denoting the first component operator of $\ULax_\tau$ by
$\Sigma^{\rm st}_\tau$, we have \vspace{-2mm}
\begin{equation} \label{Lax-Phillip_Semi}
   \ULax_\tau \m=\m \bbm{\ \  \ \Sigma^{\rm st}_\tau\\ 0 & \n
   \SSS_\tau^*} \quad \forall \tau\geq 0.\vspace{-2mm}
\end{equation}
It follows from the semigroup properties that $\Sigma^{\rm st}_\tau$
is continuous (from $X\times \Uscr$ to $X$) and the
composition, identity and continuity properties (from Definition
\ref{AbstractSystem}) hold.

Now we prove the causality of $\Sigma^{\rm st}$. We choose $\tau>0$,
fixed during this step of the proof. Denote, for all $t\geq 0$,
\vspace{-2mm}
\begin{equation} \label{z(t)Lax-Phillips like}
   \bbm{z(t)\\ v_t} \m=\m \ULax_t\bbm{z_0\\ v},\qquad \bbm{\tilde{z}
   (t)\\ \tilde{v}_t} \m=\m \ULax_t\bbm{z_0\\ {\displaystyle v
   \ICON_{\tau} 0}}, \vspace{-2mm}
\end{equation}
where $\sbm{z_0\\[1mm] v}\in\Dscr(\Ascr)$ and $v(\tau)=0$, such that
we have $\sbm{z_0\\[1mm]{\displaystyle v\ICON_{\tau}0}}\in\Dscr
(\Ascr)$. According to Definition \ref{NonlinearSemigroupDef} and
Corollary \ref{C_0_nonlinearSemi}, the two $X\times\Uscr$-valued
functions defined in \rfb{z(t)Lax-Phillips like} are Lipschitz
continuous and right differentiable for all $t\geq 0$, they stay in
$\Dscr(\Ascr)$ for all $t\geq 0$ and satisfy \vspace{-1mm}
$$ \frac{\dd^+}{\dd t}\sbm{z(t)\\[1mm] v_t} \m\in\m \Ascr_0\sbm{z(t)
   \\[1mm] v_t},\qquad \frac{\dd^+}{\dd t} \sbm{\tilde{z}(t)\\[1mm] 
   \tilde{v}_t} \m\in\m \Ascr_0\sbm{\tilde{z}(t)\\[1mm] 
   \tilde{v}_t}.$$
Hence, the functions $z(t)$ and $\tilde{z}(t)$ (emanating from the
same initial value $z_0$) satisfy the following differential
inclusions: \vspace{-1mm}
\begin{equation} \label{Trj1}
   \dot{z}(t) \m\in\m (A-\Mscr)z(t) + B\delta_0^* v_t,
\end{equation}
\begin{equation} \label{Trj2}
   \dot{\tilde{z}}(t) \m\in\m (A-\Mscr)\tilde{z}(t) + B\delta_0^*
   \tilde{v}_t. \vspace{1mm}
\end{equation}
Recall that ${\displaystyle v\ICON_\tau 0}=\PPP_\tau v$. Using the
fact that $v_t=\SSS^*_tv$ and $\tilde{v}_t=\SSS^*_t\PPP_\tau v$, we
get that for all $t\geq 0$, \vspace{-2mm}
$$ \n\n\n\n\frac{\dd^+}{\dd t}z(t) \m\in\m (A-\Mscr)z(t) + Bv(t),
   \vspace{-2mm}$$
$$ \frac{\dd^+}{\dd t}\tilde{z}(t)\m\in\m (A-\Mscr)\tilde{z}(t) +
   B(\PPP_\tau v)(t). \vspace{1mm}$$
Taking the difference and taking inner product with
$z(t)-\tilde{z}(t)$, we obtain that for all $t\in [0,\tau]$,
\vspace{-1mm}
$$ \frac{1}{2}\frac{\dd^+}{\dd t}\|z(t)-\tilde{z}(t)\|^2 \m\in\m 
   \left\langle A(z(t)-\tilde{z}(t)),z(t)-\tilde{z}(t)\right\rangle
   \vspace{-3mm}$$
$$ \qquad\qquad \ \  \quad \quad \quad \ \ \ \ \  \ \ -\left\langle 
   \Mscr z(t)-\Mscr \tilde{z}(t), z(t)-\tilde{z}(t)\right\rangle.
   \vspace{1mm}$$
The operator $\Mscr$ is maximal monotone by assumption (see Section
\ref{sec1}), and $A-\l I$ is dissipative (from
\rfb{Lax-Phillips_like}), therefore \vspace{-2mm}
$$ \half\frac{\dd^+}{\dd t}\|z(t)-\tilde{z}(t)\|^2 \m\leq\m \l\|z(t)
   -\tilde{z}(t)\|^2 \quad\quad \forall\ t\in [0,\tau].\vspace{-1mm}$$
Therefore $\|z(t)-\tilde{z}(t)\|^2\leq e^{2\l t}\|z(0)-\tilde{z}(0)
\|^2$ for $t\in[0,\tau]$. We know that $z(0)=\tilde{z}(0)$, therefore
$z(t)=\tilde{z}(t)$ for all $t\in [0,\tau]$. Hence \vspace{-5mm}
\begin{equation} \label{CausalSigma}
   \m\ \ \ \ \ \ \ \ \Sigma^{\rm st}_t\bbm{z_0\\ v} \m=\m 
   \Sigma^{\rm st}_t \bbm{z_0\\ \PPP_{\tau} v}\quad \quad \forall\ t
   \in [0,\tau], \vspace{-2mm}
\end{equation}
for all $\sbm{z_0\\v}\in \Dscr(\Ascr)$ with $v(\tau)=0$. Such pairs
$\sbm{z_0\\ v}$ are dense in $X\times \Uscr$, therefore by continuous
extension we get that \rfb{CausalSigma} remains true for all
$\sbm{z_0\\ v}\in X\times\Uscr$ (causality). \vspace{-1mm}
 
Now we prove the existence of classical solutions of \rfb{Boris} for
any pair $(x_0,u)$ as described after \rfb{Lax-Phillips_like}. Since
$\Ascr_0=\Ascr^\Mscr+\l I$, where $\Ascr^\Mscr$ is maximal dissipative
and densely defined and $\l\geq 0$, the differential inclusion
\vspace{-2mm}
\begin{equation} \label{psi_inclusion}
   \dot{\psi}_t \m\in\m \Ascr_0 \psi_t \m,\vspace{-2mm}
\end{equation}
with initial state $\psi_0\in\Dscr(\Ascr)$, has a unique classical
solution for all $t\geq 0$ (according to Theorem \ref{Kato}). We
decompose $\psi_t=\sbm{x(t)\\ u_t}$, then it follows from 
\rfb{Lax-Phillip_Semi} that $u_0(t)=u_t(0)$, therefore we have:
\vspace{-2mm}
\begin{equation} \label{Xi_Jinping}
   \dot{x}(t) \m\in\m (A-\Mscr) x(t) + Bu(t),\vspace{-2mm}
\end{equation}
which is the same as \rfb{Boris}. Hence for any $u\in\Hscr^1_\loc
((0,\infty);U)$ and $x_0\in X$ such that $Ax_0+Bu(0)\in X$, the
differential inclusion \rfb{Boris} has a unique classical
solution. This classical solution is Lipschitz continuous on any
finite interval and right differentiable for all $t\geq 0$, according
to Theorem \ref{Kato}.

Finally, we have to prove the estimate \rfb{Sol_inequ}. We go back to
the original norm on $U$. For two solutions $x$ and $z$ of \rfb{Boris}
and their respective inputs $u$ and $v$, we have that \vspace{-2mm}
$$ \n\n\n\frac{\dd}{\dd t}\|x(t)-z(t)\|^2 \m=\m 2\left\langle 
   x(t)-z(t), \dot{x}(t)-\dot{z}(t)\right\rangle \vspace{-4mm}$$
$$ \quad  \m\qquad \in\m 2\left\langle x(t)-z(t),A(x(t)-z(t)) + B
   (u(t)-v(t)) \right\rangle$$
$$ \n\n\n\n - 2\left\langle x(t)-z(t),\Mscr x(t)-\Mscr z(t)
   \right\rangle \m.\vspace{1mm}$$ 
Using \rfb{equi_cond1} and the fact that $-\Mscr$ is dissipative, we
obtain \vspace{-2mm}
$$ \frac{\dd}{\dd t}\|x(t)-z(t)\|^2 \m\leq\m 2\l\|x(t)-z(t)\|^2+2
   \beta\|u(t)-v(t)\|^2.$$
Denoting $\theta(t)=e^{-2\lambda t}\|x(t)-z(t)\|^2$, we have
\vspace{-1mm}
$$\dot{\theta}(t)\m\leq\m e^{-2\lambda t}2\beta\|u(t)-v(t)\|^2.$$
By integrating the above inequality, we obtain \rfb{Sol_inequ}. \
$\square$

\begin{corollary} \label{GeneralizedSol_4}
With the assumptions and the notation of Theorem \ref{Gen_sol}, for
any $x_0\in X$ and any $u\in\LEloc U$, there exists a unique function
$x:[0,\infty)\rarrow X$ such that $(x,u)$ is a generalized solution of
\rfb{Boris} and $x(0)=x_0$. Let $x_0,z_0\in X$, $u,v\in \LEloc U$, let
$(x,u)$ be a generalized solution of \rfb{Boris} with $x(0)=x_0$ and
let $(z,v)$ be a generalized solution of \rfb{Boris} with $z(0)=z_0$.
Then for all $t\geq 0$, \rfb{Sol_inequ} holds.
\end{corollary}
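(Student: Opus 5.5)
The plan is to obtain generalized solutions by density and continuous extension, using the estimate \rfb{Sol_inequ} of Theorem \ref{Gen_sol} as the Cauchy-sequence mechanism. There is one small mismatch to address first. Definition \ref{Clas_Sol} requires inputs in $L^2([0,\infty);U)$, while the corollary allows $u\in\LEloc U$. I would handle this through causality: fix $T>0$ and work with $\PPP_T u\in\Uscr$, then patch the pieces together. Alternatively, one can simply note that \rfb{Sol_inequ} on $[0,T]$ only involves $u$ on $[0,T]$, so it suffices to treat $u\in\Uscr$ and afterwards let $T\to\infty$ along an increasing sequence of truncations. The causality of $\Sigma^{\rm st}$ proved in Theorem \ref{Gen_sol} guarantees that these pieces are consistent.

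For existence with $u\in\Uscr$ and $x_0\in X$, I would take the abstract nonlinear control system $\Sigma^{\rm st}$ of Theorem \ref{Gen_sol} and define $x(t)=\Sigma^{\rm st}_t\sbm{x_0\\ u}$; this is continuous in $t$ by the continuity property. Next, choose $\sbm{x_{0n}\\ u_n}\in\Dscr(\Ascr)$, that is $u_n\in\Hscr^1((0,\infty);U)$ with $Ax_{0n}+Bu_n(0)\in X$, converging to $\sbm{x_0\\ u}$ in $X\times\Uscr$. Such a sequence exists because $\Dscr(\Ascr)$ is dense (Proposition \ref{LaxPhillipsGenTh} and Theorem \ref{Putin}, as used in the proof of Theorem \ref{Gen_sol}). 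The corresponding classical solutions are $x_n(t)=\Sigma^{\rm st}_t\sbm{x_{0n}\\ u_n}$. Since $\Sigma^{\rm st}_t$ is continuous from $X\times\Uscr$ to $X$, we get $x_n(t)\to x(t)$ for every $t$. More quantitatively, \rfb{Sol_inequ} shows that $(x_n(t))$ is Cauchy uniformly on compact intervals, which also gives $x\in C([0,\infty);X)$ directly. Hence $(x,u)$ is a generalized solution.

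For uniqueness, suppose $(x,u)$ and $(\tilde x,u)$ are generalized solutions with the same $x_0$, approximated by classical solutions $(x_n,u_n)$ and $(\tilde x_n,\tilde u_n)$. Applying \rfb{Sol_inequ} to $x_n,\tilde x_n$ gives
\[
\|x_n(t)-\tilde x_n(t)\|^2\leq e^{2\l t}\Bigl[\|x_n(0)-\tilde x_n(0)\|^2+2\beta\int_0^t e^{-2\l\sigma}\|u_n-\tilde u_n\|^2\dd\sigma\Bigr].
\]
The right side tends to $0$, since both initial values converge to $x_0$ (pointwise convergence at $t=0$) and both inputs converge to $u$ in $L^2$. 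Therefore $x(t)=\tilde x(t)$.

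The estimate \rfb{Sol_inequ} for generalized solutions follows the same way. Apply it to the approximating classical solutions and pass to the limit: pointwise convergence handles the state terms, and $L^2$ convergence of the inputs handles the integral, because $e^{-2\l\sigma}\leq 1$. The only delicate point is the $\LEloc U$ versus $L^2$ bookkeeping described at the start. There I would define the generalized solution on $[0,T]$ from $\PPP_T u$ and use causality (\rfb{CausalSigma}) together with uniqueness to show that the definitions agree for different $T$.
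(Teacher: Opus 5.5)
Your proposal is correct and follows the paper's argument. The paper only states that the corollary follows from Theorem \ref{Gen_sol} by density of $\Dscr(\Ascr)$ in $X\times\Uscr$ and continuous extension, and you carry out exactly that extension via the continuity of $\Sigma^{\rm st}_t$ and the estimate \rfb{Sol_inequ}; your uniqueness argument, the passage to the limit in \rfb{Sol_inequ}, and the truncation/causality bookkeeping for $u\in\LEloc U$ (a case that Definition \ref{Clas_Sol}, which literally requires $u\in L^2([0,\infty);U)$, does not cover) are details the paper leaves implicit.
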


This corollary follows from Theorem \ref{Gen_sol} by the density of
$\Dscr(\Ascr^\Mscr)=\Dscr(\Ascr)$ in $X\times \Uscr$ and continuous
extension.

\smallskip
{\color{blue}
\begin{proposition} \label{equi_cond}
If $\l\geq 0$ is such that $\l I-A$ has a bounded inverse, then for
any $\beta>0$, condition \rfb{Lax-Phillips_like} in Theorem
\ref{Gen_sol} is equivalent to:
\begin{equation} \label{Equiv_cond}
   \|B^*\phi\|^2\leq 4\beta\left\langle(\l I-A^*)\phi,\phi\right
   \rangle \quad \forall \phi \in \Dscr(A^*),
\end{equation}
where $A^*$ and $B^*$ are adjoints of $A$ and $B$ respectively.
\end{proposition}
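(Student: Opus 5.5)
The idea is to parametrize $\Dscr(\AB)$ by $\Dscr(A)\times U$ through the resolvent of $A$, and then recognise \rfb{Lax-Phillips_like} as the nonnegativity of a quadratic form in $u_0$ that can be minimised explicitly. Since $\l I-A$ has a bounded inverse, every $\sbm{x_0\\u_0}\in\Dscr(\AB)$ can be written uniquely as $x_0=w+(\l I-A)^{-1}Bu_0$ with $w\in\Dscr(A)$. Here $(\l I-A)^{-1}$ is extended to $X_{-1}\to X$, and we use that $\l\in\rho(A)$ and $X_{-1}$ is independent of the choice of the point in the resolvent set. For such elements we have $(A-\l I)x_0+Bu_0=(A-\l I)w$. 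By Remark \ref{equi_cond1_re}, \rfb{Lax-Phillips_like} then becomes
$$ \langle (\l I-A)w, w+(\l I-A)^{-1}Bu_0\rangle+\beta\|u_0\|^2\geq 0\qquad\forall\, w\in\Dscr(A),\ u_0\in U.$$

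Next we substitute $\phi=(\l I-A^*)^{-1}(\l I-A)w$ into the cross term, so that $(\l I-A)w=(\l I-A^*)\phi$. Then $\langle (\l I-A)w,(\l I-A)^{-1}Bu_0\rangle=\langle \phi, Bu_0\rangle_{X_1^d,X_{-1}}=\langle B^*\phi,u_0\rangle$, using the duality between $\Dscr(A^*)$ and $X_{-1}$. A cleaner choice is to parametrize directly by $\phi\in\Dscr(A^*)$ and set $x_0=(\l I-A)^{-1}\bigl[(\l I-A^*)\phi+Bu_0\bigr]$. The correspondence $\phi\leftrightarrow w$ fails to be a bijection onto $\Dscr(A)$ in general, so I would instead write the form in terms of $\eta=(\l I-A)w\in X$, which ranges over all of $X$. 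This gives
$$ \langle \eta,(\l I-A)^{-1}\eta\rangle+\langle (\l I-A^*)^{-1}\eta, Bu_0\rangle+\beta\|u_0\|^2\geq0 .$$
Putting $\phi=(\l I-A^*)^{-1}\eta$, which ranges over all of $\Dscr(A^*)$, the first term equals $\langle(\l I-A^*)\phi,\phi\rangle$ and the second equals $\langle B^*\phi,u_0\rangle$. This is a genuine bijection, and all spaces are real.

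The condition is therefore
$$ \langle(\l I-A^*)\phi,\phi\rangle+\langle B^*\phi,u_0\rangle+\beta\|u_0\|^2\geq0\qquad\forall\,\phi\in\Dscr(A^*),\ u_0\in U.$$
Minimising over $u_0$ for fixed $\phi$ gives the minimiser $u_0=-B^*\phi/(2\beta)$, with minimum value $\langle(\l I-A^*)\phi,\phi\rangle-\|B^*\phi\|^2/(4\beta)$. Hence the condition holds for all $u_0$ if and only if \rfb{Equiv_cond} holds, which proves both directions at once.

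The main obstacle is justifying the duality identity $\langle(\l I-A)^{-1}\eta',Bu_0\rangle$-type manipulations rigorously, namely the formula $\langle (\l I-A^*)^{-1}\eta,Bu_0\rangle_{X_{-1}\text{-pairing}}=\langle B^*\phi,u_0\rangle$. This requires identifying $X_{-1}$ with the dual of $\Dscr(A^*)$ with respect to the pivot space $X$, and recalling that $B^*\in\Lscr(\Dscr(A^*),U)$ is defined by exactly this pairing. I would state this standard fact (as in \cite{obs_book}) explicitly. I would also check carefully that $x_0=(\l I-A)^{-1}[\eta+Bu_0]$ indeed parametrizes all of $\Dscr(\AB)$, which follows from $(\l I-A)x_0=\eta+Bu_0$ with $\eta\in X$.
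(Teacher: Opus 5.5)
Your proof is correct and is essentially the paper's argument: your $\eta=(\l I-A)x_0-Bu_0$ is just $-z_0$ in the paper's notation, and both proofs do the same three things. They rewrite \rfb{equi_cond1} as nonnegativity of a quadratic form in $(z_0,u_0)\in X\times U$, minimise it explicitly over $u_0$, and substitute $\phi=(\l I-A^*)^{-1}z_0$; your ``iff'' via the minimum simply handles both directions at once. One aside is wrong but harmless: $w\mapsto(\l I-A^*)^{-1}(\l I-A)w$ is in fact a bijection from $\Dscr(A)$ onto $\Dscr(A^*)$, being the composition of the bijections $\l I-A:\Dscr(A)\to X$ and $(\l I-A^*)^{-1}:X\to\Dscr(A^*)$.
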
}

{\em Proof.} \m If \rfb{equi_cond1} holds, then denoting $z_0=(A-\l 
I)x_0+Bu_0$, we get that $J\geq 0$, where the cost function $J$ is
defined for all $z_0\in X$ and $u_0\in U$ by \vspace{-1mm}
$$ J \m=\m \beta\|u_0\|^2-\left\langle z_0,(\l I-A)^{-1}Bu_0\right
   \rangle + \left\langle z_0,(\lambda I-A)^{-1}z_0\right\rangle.
   \vspace{-1mm}$$
Differentiate $J$ with respect to $u_0$ in order to minimize it:
\vspace{-2mm}
$$ \frac{\partial J}{\partial u_0}\m=\m 2\beta u_0-B^*(\l I-A^*)^{-1}
   z_0=0.\vspace{-2mm}$$
Thus the minimum with respect to $u_0$ is attained at $u_0=\frac{1}
{2\beta}B^*(\l I-A^*)^{-1}z_0$. Substituting this $u_0$ into $J$, we
obtain the minimum of $J$ (which we denote by $J_{min}$) as follows:
\vspace{-2mm}
$$ 0\leq J_{min} = \left\langle z_0,(\l I-A)^{-1}z_0\right\rangle
   -\frac{1}{4\beta}\|B^*(\l I-A^*)^{-1}z_0\|^2 .\vspace{-1mm}$$
Define $(\l I-A^*)^{-1}z_0 =\phi $ where $\phi\in\Dscr(A^*)$, then
the above inequality becomes \rfb{Equiv_cond}.

Conversely, if \rfb{Equiv_cond} is true for any $\phi\in \Dscr(A^*)$,
then for $z_0=(\lambda I-A^*)\phi $ we get\vspace{-3mm}
$$ 0 \m\leq\m \langle z_0,(\l I-A^*)^{-1}z_0\rangle-\frac{1}{4\beta}
   \|B^*(\l I-A)^{-1}z_0\|^2.\vspace{-3mm} $$
The right-hand side above is $J_{min}$, the minimum of $J$ with
respect to $u_0\in U$. Since $J_{min}\geq 0$, it follows that for all
$z_0\in X$ and all $u_0\in U$\vspace{-2mm}
$$ J \m=\m \beta\|u_0\|^2-\left\langle z_0,(\l I-A)^{-1}B u_0\right
   \rangle + \left\langle z_0,(\l I-A)^{-1}z_0\right\rangle\geq 0.
   \vspace{-2mm}$$
Denoting $x_0=(A-\l I)^{-1}\left[z_0-Bu_0\right]$ it is easy to derive
\rfb{equi_cond1}, which is equivalent to \rfb{Lax-Phillips_like}. \
$\square$       

\begin{remark} \label{bounded_B} \rm
For systems as in \rfb{Boris} with $B$ bounded (i.e. $B\in\Lscr(U,
X)$) and with $A$ dissipative, the inequality \rfb{Equiv_cond} will
always hold for some $\beta>0$, $\l>0$. This follows easily if we
take $4\beta\l\geq\|B\|$. This happens to be the case in Example
\ref{SCOLE_ex} for the nonlinear system described by \rfb{eq:SCOLE}.
\end{remark}

\begin{remark} \label{Admissible_B} \rm 
The estimate \rfb{Lax-Phillips_like}, or equivalently
\rfb{Equiv_cond}, implies that $B$ is an admissble control operator
for $\tline$, the operator semigroup generated by $A$. This follows
from \rfb{Sol_inequ} by taking $\Mscr=0$, $x_0=0,\m z_0=0,\m v=0$, or
alternatively, from Theorem 5.1.1 (part (d)) in \cite{obs_book}.
\end{remark}

\section{The main result} \label{sec6} 

First we have to clarify what we mean by a (possibly non-linear)
time-invariant well-posed system. Our definition is far from being the
most general or ``the best'' in any sense. Indeed, we assume that the
input, state and output spaces are real Hilbert spaces and the input
and output functions are of class $L^2_\loc$, because this is the
framework that we are used to from the linear time-invariant case, but
entirely different frameworks are conceivable. \vspace{-1mm}

Giving an axiomatic definition of a well-posed system in the spirit of
\cite{Weiss10} is possible but cumbersome. We prefer to define a
well-posed system via its (non-linear version of the) Lax-Phillips
semigroup. We now give an extension of Definition 5.6, to also include
outputs of the nonlinear system.\vspace{-1mm}

We regard $L^2([0,t];Y)$ as a subspace of $L^2([0,\infty);Y)$ (by
extending functions to be zero outside $[0,t]$). Recall the notation
$\PPP_\tau$ from Section \ref{sec2} and $\Uscr$ and $\Yscr$ from
Section \ref{sec4}. \vspace{-1mm}

\begin{definition} \label{abstractnonlinear} 
A time invariant well-posed (possibly nonlinear) system $\Sigma^\NL$
with input space $U$, state space $X$ and output space $Y$ consists of
two families of (possibly nonlinear) continuous operators\vspace{-2mm}
$$ \Sigma^{\rm  st} \m=\m (\Sigma^{\rm  st}_t)_{t\geq 0} \m,\quad
   \Sigma^{\rm out} \m=\m (\Sigma^{\rm out}_t)_{t\geq 0} \m,
   \vspace{-1mm}$$
where $\Sigma^{\rm st}_t:X\times\Uscr\rarrow X$ and $\Sigma
^{\rm out}_t:X\times\Uscr\rarrow L^2([0,t];Y)$ such that the 
following is a strongly continuous semigroup of (possibly nonlinear)
operators $\ULax^\NL=(\ULax^\NL_t)_{t\geq 0}$ acting on $\Yscr
\times X\times\Uscr$: for every $t\geq 0$, \vspace{-2mm}
\begin{equation} \label{J_Pasha}
   \ULax^\NL_t \m=\m \bbm{\Sscr_{-t} & 0 \sbluff    & 0 \\
   0 & I & 0 \\ 0 & 0 & \SSS_t^*} \left[\begin{array}{c|c c} I & \ 
   \Sigma^{\rm out}_t \\ 0 & \ \Sigma^{\rm st} _t \\ \hline
   \ \ 0 \ \  & 0\ \ \ I \end{array}\right].\vspace{-2mm}
\end{equation}
Moreover, we require that for all $\tau\geq 0$, \vspace{-3mm}
\begin{equation} \label{Identities}
   \Sigma^{\rm st}_\tau\bbm{x_0\\ v} =\m \Sigma^{\rm st}_\tau
   \bbm{x_0\\ \PPP_\tau v},\ \ \ \Sigma^{\rm out}_\tau\bbm{x_0\\ v}
   =\m \Sigma^{\rm out}_\tau\bbm{x_0\\ \PPP_\tau v},\vspace{-3mm}
\end{equation}
for all $v\in\Uscr$ and $x_0\in X$.
\end{definition} \vspace{-1mm}

The identities \rfb{Identities} are called the {\em causality
conditions}, the first one appeared previously in Definition
\ref{AbstractSystem}.\vspace{-1mm}

To understand the meaning of the above definition, one should compare
it to Proposition \ref{LaxPhilProp}. We have already encountered the
family $\Sigma^{\rm st}$ in Definition \ref{AbstractSystem}. It is
clear that in the linear case, we have the
decompositions: \vspace{-2mm}
\begin{equation} \label{Duke_of_Edinborough}
   \Sigma^{\rm st} _t \m=\m [ \tline_t\ \m\Phi_t]   \m,\qquad
   \Sigma^{\rm out}_t \m=\m [ \Psi_t  \ \m\fline_t] \m,\vspace{-2mm}
\end{equation}
but in the nonlinear case, in general we cannot split these operators
as above. The semigroup property for $\ULax^\NL$ implies functional
equations for the families of operators $\Sigma^{\rm st}$ and
$\Sigma^{\rm out}$ which, in the linear case, become the functional
equations in the definition of a well-posed linear system, as given
in \cite{Weiss10}. The resulting conditions for the family
$\Sigma^{\rm st}$ are precisely those in Definition
\ref{AbstractSystem}. Naturally, we call $\ULax^\NL$ the {\em
Lax-Phillips semigroup} of the system \m $\Sigma^\NL=(\Sigma^{\rm
st},\Sigma^{\rm out})$. \vspace{-1mm}

The causality condition must be explicitly verified because it does
not follow from the other assumptions in the definition. This can be
seen in the following trivial example.\vspace{-1mm}

\begin{example} {\rm
Consider the system \vspace{-2mm}
$$ \dot{x}(t) \m=\m Ax(t)+Bu(t)+B_1u(t+1) \vspace{-2mm}$$ 
$$ y(t) \m=\m Cx(t)+Du(t)+D_1u(t+7)$$
where $A,B,C,D,B_1,D_1\in \rline$. It is clear that the solutions of
the above equations are given by families of bounded linear operators
$\tline,\Phi,\Psi$ and $\fline$ such that \vspace{-3mm}
$$ \bbm{x(\tau)\\ \PPP_\tau y}\m=\m \bbm{\tline_\tau & \Phi_\tau\\ 
   \Psi_\tau & \fline_\tau}\bbm{x(0)\\ u}.\vspace{-3mm}$$
The reader can easily write the explicit formulas expressing these
operators. Defining $\Sigma^{\rm st}_t$ and $\Sigma^{\rm out}_t$ via
\rfb{Duke_of_Edinborough}, this system satisfies the properties in
Definition \ref{abstractnonlinear} except for the causality
\rfb{Identities}. This shows that causality does not follow from the
other properties required in Definition \ref{abstractnonlinear}.}
\end{example} \vspace{-1mm}

A time-invariant well-posed (possibly nonlinear) system $\Sigma^\NL$
is called {\em incrementally scattering passive} if its Lax-Phillips
semigroup $\ULax^\NL$ is contractive, or equivalently, if the estimate
\rfb{energy_balance_M} holds. Incrementally scattering passive systems
come with the big advantage that they can be described locally in
time, via the generator of $\ULax^\NL$. We shall discuss this in
another paper. \vspace{-1mm}

Here is our main result about systems of the form 
\rfb{Boris}-\rfb{Mirvis}:

{\color{blue}
\begin{theorem} \label{Hans}
Let \m $\Sigma$ be a scattering passive linear system with input space
$U$, state space $X$ and output space $Y$, described by the operators
$A,B,\bar{C}$ and $D$, as in \rfb{Ax+Bu}. Let $\Mscr$ be a
(set-valued) maximal monotone operator defined on all of $X$. Then
there exists a time-invariant well-posed nonlinear system
$\Sigma^\Mscr$ obtained by replacing $A$ in \rfb{Ax+Bu} by $A-\Mscr$,
so that \m $\Sigma^\Mscr$ is described by the differential inclusion
\vspace{-3mm}
\begin{equation} \label{ABMscr}
   \dot{x}(t) \m\in \m \bbm{A-\Mscr & B}\sbm{x(t)\\[1mm] u(t)} \m,
   \vspace{-3mm}
\end{equation}
and the same output equation as $\Sigma:$  \vspace{-3mm}
\begin{equation} \label{CDMscr}
   y(t)\m=\m \bbm{\bar{C} & D} \sbm{x(t)\\[1mm] u(t)}.\vspace{-3mm}
\end{equation}
Moreover, $\Sigma^\Mscr$ is incrementally scattering passive.
\end{theorem}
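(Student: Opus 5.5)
The plan is to mimic the proof of Theorem \ref{Gen_sol}, but working on the full Lax-Phillips space $\Yscr\times X\times\Uscr$ so that the output is included.

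\textbf{Step 1: the nonlinear generator.} Let $\GothA$ be the generator of the (linear) Lax-Phillips semigroup $\ULax$ of $\Sigma$, as in Proposition \ref{LaxPhillipsGenTh} and \rfb{GothA_LP}. Since $\Sigma$ is scattering passive, Proposition \ref{attack_in_Halle} shows that $\ULax$ is contractive. Theorem \ref{Putin} (linear case) then gives that $\GothA$ is maximal dissipative and densely defined.

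Define
$$\tilde\Mscr=\sbm{0&0&0\\0&\Mscr&0\\0&0&0}$$
on $\Yscr\times X\times\Uscr$. This operator is maximal monotone and defined everywhere, so $\Dscr(\GothA)\cap{\rm int}\,\Dscr(\tilde\Mscr)=\Dscr(\GothA)\neq\emptyset$. By Theorem 1 of \cite{Rocka}, $\GothA^\Mscr=\GothA-\tilde\Mscr$ is maximal dissipative with dense domain $\Dscr(\GothA)$. The Crandall-Pazy theorem (Theorem \ref{Crandall-Pazy}) then yields a contractive strongly continuous nonlinear semigroup $\ULax^\Mscr$ on $\Yscr\times X\times\Uscr$. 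By Theorem \ref{Kato}/Corollary \ref{C_0_nonlinearSemi} with $\gamma=0$, its trajectories starting in $\Dscr(\GothA)$ are Lipschitz, right differentiable, and satisfy $\frac{\dd^+}{\dd t}\psi\in\GothA^\Mscr\psi$.

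\textbf{Step 2: the block structure \rfb{J_Pasha}.} Take $\psi_0=(y_0,x_0,u_0)\in\Dscr(\GothA)$ and write $\psi(t)=(\eta_t,x(t),u_t)$. The third row of the inclusion reads $\dot u_t=u_t'$. As in the proof of Theorem \ref{Gen_sol}, this forces $u_t=\SSS_t^*u_0$. The second row gives $\dot x\in(A-\Mscr)x+Bu_0(t)$, which is exactly the classical solution of Theorem \ref{Gen_sol}. That theorem applies with $\l=0,\beta=1/2$, because \rfb{ScatteringCond} with $y_0=0$ gives \rfb{Lax-Phillips_like}. Hence $x(t)=\Sigma^{\rm st}_t\sbm{x_0\\u_0}$.

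The first row is a transport equation $\dot\eta_t=\eta_t'$, with the boundary condition $\eta_t(0)=\bar Cx(t)+Du_0(t)$ imposed by the domain condition (which holds since $\psi(t)\in\Dscr(\GothA)$). Solving it gives $\eta_t=\Sscr_{-t}(y_0+\PPP_t y)$, where $y(s)=[\CD]\sbm{x(s)\\u_0(s)}$. This step needs care. I would verify it by showing that $\Sscr_t\eta_t-y_0$ has right derivative zero on $(-\infty,-t)$, plus a characteristics argument, in the weak/distributional sense using \rfb{GothA_LP} and the formula $[\frac{\dd}{\dd\xi}]_\Yscr\varphi=\varphi'-\delta_0\varphi(0)$.

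We then set $\Sigma^{\rm out}_t\sbm{x_0\\u_0}=\PPP_t y$. This depends only on $(x_0,u_0)$ and not on $y_0$. By contractivity of $\ULax^\Mscr$, the maps $\Sigma^{\rm st}_t$ and $\Sigma^{\rm out}_t$ are Lipschitz on a dense set. They therefore extend continuously to $X\times\Uscr$, and the structure \rfb{J_Pasha} persists by density.

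\textbf{Step 3: causality and conclusion.} The causality of $\Sigma^{\rm st}$ follows exactly as in the proof of Theorem \ref{Gen_sol}. For $\Sigma^{\rm out}$, take $v$ with $v(\tau)=0$. The state trajectories for $v$ and $\PPP_\tau v$ coincide on $[0,\tau]$, so by Step 2 the outputs coincide there as well. Density then gives \rfb{Identities}.

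Incremental scattering passivity is just the contractivity of $\ULax^\Mscr$. As a consistency check, differentiating $\|x_1-x_2\|^2$ and using \rfb{ScatteringCond} together with the monotonicity of $\Mscr$ gives \rfb{energy_balance_M} directly.

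\textbf{Main obstacle.} I expect the hardest part to be Step 2's identification of the $\Yscr$-component. The issue is that right derivatives in the nonlinear setting are only one-sided, and the output $y$ is only $L^2_\loc$ (not $\Hscr^1$) along nonlinear trajectories. I would handle this by first proving $\eta_t(0)$-continuity from the Lipschitz property of $\psi$ in the graph-type norm, and only then integrating.
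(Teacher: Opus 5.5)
Your proposal is correct and follows the paper's proof essentially step for step: Rockafellar's sum theorem for $\GothA-\tilde\Mscr$, the Crandall--Pazy contraction semigroup, the left-shift form of the $\Uscr$-component, Theorem \ref{Gen_sol} with $\l=0$, $\beta=\half$ for the state, the transport equation with boundary value $[\CD]\sbm{x(t)\\ u_0(t)}$ for the $\Yscr$-component, and extension by density. Your explicit Lipschitz bound for $\Sigma^{\rm out}_t$ from contractivity and your check of output causality in \rfb{Identities} fill in points the paper leaves implicit. Two corrections to your ``main obstacle'' remark: Lipschitz continuity of the trajectory in a graph norm is not available in general, and the output along trajectories starting in $\Dscr(\GothA)$ is not merely $L^2_\loc$. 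Neither point causes trouble, because the characteristics argument gives $\eta_t(\xi)=w(\xi+t)$ on $[-t,0]$ directly, and then $\eta_t\in\Hscr^1((-\infty,0);Y)$ shows that $w$ is continuous, indeed $w\in\Hscr^1_\loc$.
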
}

{\em Proof.} \m Let $\ULax$ be the Lax-Phillips semigroup of \m
$\Sigma$ and let $\GothA$ be its generator, as described in
Proposition \ref{LaxPhillipsGenTh}. According to Proposition
\ref{attack_in_Halle} and the linear version of Theorem \ref{Putin},
$\GothA$ is maximal dissipative and densely defined. We introduce the
operator $\GothA^\Mscr$ as a perturbation of $\GothA$: \vspace{-2mm}
\begin{equation} \label{Gefen}
   \GothA^\Mscr \m=\m \bbm{\frac{\dd}{\dd\xi} & 0 & 0 \\
   0 & A-\Mscr & B\delta^*_0\\ 0 & 0 & \frac{\dd}{\dd\xi}} ,
   \vspace{-2mm}
\end{equation}
with the same domain $\Dom\GothA$, as described in Proposition
\ref{LaxPhillipsGenTh}. Recall that $\delta^*_0$ is the operator of
point evaluation at $0$. \vspace{-1mm}

In order to prove that $\GothA^\Mscr$ is maximal dissipative and 
densely defined, we split $\GothA^\Mscr$ as follows: \vspace{-2mm}
\begin{equation}\label{Split}
 \GothA^\Mscr \m=\m \underbrace{\bbm{\frac{\dd}{\dd\xi} & 0 & 0
   \\ 0 & A & B\delta^*_0 \\ 0 & 0 & \frac{\dd}{\dd\xi}}}_{\text{
   $\GothA$}} + \underbrace{\bbm{0 & 0 & 0 \\ 0 & -\Mscr & 0 \\
   0 & 0 & 0}}_{\text{$\tilde{\Mscr}$}}.\vspace{-2mm}
\end{equation}
We have seen that the first term on the right-hand side, $\GothA$ is
maximal dissipative and densely defined. The second term
$\tilde{\Mscr}$ is maximal dissipative and everywhere defined on
$\Yscr\times X \times\Uscr$, by assumption. Therefore,
$\Dscr(\GothA)\cap\left({\rm int}\Dscr(\tilde{\Mscr})\right)=\Dscr
(\GothA)$, which is dense. According to Theorem 1 of \cite{Rocka}, it
follows that $\GothA^\Mscr$ is maximally dissipative (and densely
defined) on $\Yscr\times X\times\Uscr$. \vspace{-1mm}

We introduce the single-valued dissipative operator $\GothA^0$ having
the same domain as $\GothA^\Mscr$, namely $\Dscr(\GothA)$, and for
each $[y_0\ x_0\ u_0]^\top\in\Dscr(\GothA)$, $\GothA^0[y_0\ x_0\
u_0]^\top$ is the vector with minimal norm in the closed and convex
set $\GothA^\Mscr[y_0\ x_0\ u_0]^\top$. According to Theorem
\ref{Crandall-Pazy} (Crandall-Pazy), $\GothA^0$ generates a
contraction semigroup $\ULax^\Mscr$ on $\Yscr\times
X\times\Uscr$. \vspace{-1mm}

Consider a trajectory of $\ULax^\Mscr$ that starts from a vector 
$[y_0\ x_0\ u_0]^\top\in\Dscr(\GothA)$: \vspace{-2mm}
$$ \left[ y_t\ x_t\ u_t \right]^\top \m=\m \ULax_t^\NL
   \left[ y_0\ x_0\ u_0 \right]^\top \FORALL t\geq 0.\vspace{-1mm}$$
According to Theorem \ref{Putin}, this trajectory remains in 
$\Dscr(\GothA)$ for all $t\geq 0$, which implies that for all $t\geq
0$ we have \vspace{-1mm}
\begin{equation} \label{Pierce_Morgan}
   y_t \in \Hscr^1((-\infty,0);Y),\quad u_t \in \Hscr^1((0,\infty);
   U),\vspace{-1mm}
\end{equation}
\begin{equation} \label{x_t,u_t(0)}
   \left[x_t \ u_t(0)\right]^T\in\Dscr(\AB) .
\end{equation} 
Moreover, still using Theorem \ref{Putin}, the trajectory $[y_t\ 
x_t\ u_t]^\top$ is locally Lipschitz continuous and right 
differentiable, and it satisfies for all $t\geq 0$ the differential
equations\vspace{-2mm}
\begin{equation} \label{systemevolution}
   \frac{\dd}{\dd t} \left[ y_t\ x_t\ u_t \right]^\top \m=\m 
   \GothA^0 \left[ y_t\ x_t\ u_t \right]^\top .\vspace{-2mm}
\end{equation}
The last row of \rfb{systemevolution} is the partial differential
equation $\frac{\partial}{\partial t}u_t=\frac{\partial}{\partial\xi}
u_t$ in $\Uscr$. It follows that $u_t=\SSS_t^* u_0$, i.e., the last
component evolves according to the left shift semigroup on
$\Uscr$. Hence, $u_t(0)=u_0(t)$.  Combining this with
\rfb{x_t,u_t(0)}, we get that $\sbm{x_t\\ u_0(t)}\in\Dscr(\AB)$ for
all $t\geq 0$.

We know from Proposition \ref{ScatteringPassive} that
\rfb{ScatteringCond} holds. Using \rfb{AB}, this clearly implies that
\vspace{-2mm}
$$ \left\langle\bbm{A & B \\ 0 & -\frac{1}{2} I} \bbm{x_0\\ v},
   \bbm{x_0\\ v}\right\rangle \leq\m 0 \n\n\n \FORALL \bbm{x_0\\ v}
   \in\Dscr(\AB).\vspace{-2mm}$$ 
Thus $A, B$ and $\Mscr$ satisfy the assumptions of Theorem
\ref{Gen_sol}, with $\l=0$ and $\beta=\half$. From the middle row of
\rfb{systemevolution}, we obtain that
$$\frac{\dd }{\dd t}x_t\in (A-\Mscr)x_t+Bu_t(0).$$
Since $u_t(0)=u_0(t)$, we get that $x_t$ satisfies \rfb{Boris} (with
$u_0$ in place of $u$). According to Theorem \ref{Gen_sol}, this
differential inclusion has a unique classical solution \vspace{-2mm}
$$ x_t \m=\m \Sigma^{\rm st}_t\sbm{x_0 \\[1mm] u_0}\ \FORALL t\geq 0,
   \vspace{-2mm}$$
where the operators $\Sigma^{\rm st}_t$ describe an abstract nonlinear
control system with input space $U$ and state space $X$. \vspace{-1mm}

The first row in \rfb{systemevolution} can be written in the form
$\frac{\partial}{\partial t}y_t=\frac{\partial}{\partial
\xi}y_t$. According to \rfb {Pierce_Morgan}, the boundary values
$w(t)=y_t(0)$ are well defined (in $Y$) for every $t\geq 0$. In terms
of these boundary values, $y_t$ can be expressed as
follows:\vspace{-2mm}
\begin{equation} \label{Navalny}
   y_t(\xi) \m=\m y_0(\xi+t)\ \ \mbox{ for }\ \ 
   \xi\in(-\infty,-t) \m,
\end{equation}
\begin{equation} \label{Protest}
   \n\n\n\n \n\n\n y_t(\xi) \m=\m w(\xi+t)\ \ \ \mbox{ for } \ \ 
   \xi\in[-t,0] \m,
\end{equation}
see for instance Example 10.1.9 in \citep{obs_book}. Since $y_t$ is a
continuous function of $\xi\in(-\infty,0]$, it follows that the
function $w$ must be continuous. From the description of
$\Dscr(\GothA)$ in Theorem \ref{LaxPhillipsGenTh} we see that
\vspace{-2mm}
\begin{equation} \label{w(t)}
   w(t) \m=\m \CD\sbm{x_t\\[1mm] u_0(t)},\vspace{-2mm}
\end{equation}
which is well defined according to \rfb{x_t,u_t(0)} and $u_t(0)=u_0
(t)$. We define the operator $\Sigma^{\rm out}_t$ that maps pairs 
$\left[x_0 \ u_0\right]^T\in X\times\Hscr^1((0,\infty);U)$ with the
property $\sbm{x_0\\u_0(0)}\in\Dscr(\AB)$, into $C([0,t],Y)$ by 
\rfb{w(t)}. Then \rfb{Navalny} and \rfb{Protest} imply that
\vspace{-1mm}
$$ y_t \m=\m \Sscr_{-t}\left(y_0+\Sigma^{\rm out}_t\sbm{x_0\\[1mm]
   u_0}\right).\vspace{-1mm}$$
Thus, we have verified that for $\left[y_0 \ x_0 \ u_0\right]^\top\in
\Dscr(\GothA)$, the semigroup $\ULax^\Mscr_t$ (determined by
$\GothA^\Mscr$) has the structure as required in \rfb{J_Pasha}. Since
$\Dscr(\GothA)$ is dense in $\Yscr \times X \times \Uscr$, the
operators $\Sigma^{\rm out}_t$ must have continuous extensions to $X
\times \Uscr$ and the same structure \rfb{J_Pasha} remains valid for
$\left[y_0 \ x_0 \ u_0\right]^\top\in \Yscr \times X \times \Uscr$. 
Since $\ULax^\Mscr$ is a semigroup of contractions, the system 
$\Sigma ^\Mscr$ is incrementally scattering passive. $\square$

\begin{remark} \label{single-valued M} \rm
Theorems \ref{Gen_sol} and \ref{Hans} can be modified by considering
a different class of nonlinear perturbations $\Mscr$. Suppose that
$\Mscr$ is a densely defined single-valued operator on $X$ that
satisfies the following: $Z\subset\Dscr(\Mscr)$ and there exist
constants $k_1\in [0,1)$, $k_2>0$ such that\vspace{-2mm}
\begin{equation} \label{our_estimate}
   \|\Mscr x_2-\Mscr x_1\|_X^2 \m\leq\m k_1^2\|x_2-x_1\|_Z^2 + k_2^2
   \|x_2-x_1\|_X^2 \m, \vspace{-2mm}
\end{equation}
for all $x_1,x_2\in Z$. The space $Z$ has been defined in
\rfb{SpaceZ}. Moreover, assume that $A-\Mscr$ is dissipative (for
instance, this is the case if $\Mscr$ is monotone). Then the
conclusions of Theorems \ref{Gen_sol} and \ref{Hans} are again
true. Indeed, in both proofs we have to replace Theorem 1 of
\citep{Rocka} with Theorem 4.2 of \citep{CranPazy}, in combination
with the lemma below, and all the other steps remains the same.
Theorem 4.2 of \citep{CranPazy} uses an estimate which, expressed in
our notation, is \rfb{Super_league} below. Thus, the proof of Remark
\ref{single-valued M} reduces to the following lemma. \vspace{-2mm}
\end{remark}
  
\begin{lemma}
We use the notation of Theorem {\rm\ref{Hans}} and the beginning of
its proof. Let $\Mscr$ be as in Remark {\rm\ref{single-valued M}}, in
particular, there exist $k_1\in[0,1)$ and $k_2\geq 0$ such that for
all $x_1,x_2\in Z$, the inequality \rfb{our_estimate} holds. Then
there exists $k_3\geq 0$ such that \vspace{-2mm}
\begin{equation} \label{Super_league}
   \n\nm\left\|\tilde{\Mscr}\sbm{y_2\\ x_2\\ u_2}-\tilde{\Mscr}
   \sbm{y_1\\ x_1\\ u_1}\right\| \m\leq\m k_1\left\|\GothA\sbm
   {y_2-y_1\\ x_2-x_1\\ u_2-u_1} \right\| + k_3\left\|\sbm{y_2-y_1
   \\ x_2-x_1\\ u_2-u_1} \right\|,\vspace{-1mm}
\end{equation}
for all $\left[ y_1\ x_1\ u_1\right]^\top$, $\left[ y_2\ x_2\ u_2
\right]^\top\in\Dscr(\GothA)$, where $\tilde{\Mscr}$ is defined as
in \rfb{Split}. 
\end{lemma}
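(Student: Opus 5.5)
The plan is to reduce \rfb{Super_league} to an estimate of the $Z$-norm of the state component of a vector in $\Dscr(\GothA)$ by $\|\GothA\,\cdot\|$, with coefficient exactly $1$ in front of $\|\GothA\,\cdot\|$. Since $\GothA$ is linear and $\Dscr(\GothA)$ is a subspace, the difference $w=[y\ x\ u]^\top$ of the two vectors lies in $\Dscr(\GothA)$, and $\GothA$ of the difference is the difference of the images. By \rfb{Split}, the left-hand side of \rfb{Super_league} equals $\|\Mscr x_2-\Mscr x_1\|_X$, where $x=x_2-x_1$. By \rfb{our_estimate} and $\sqrt{a^2+b^2}\leq a+b$, this is at most $k_1\|x\|_Z+k_2\|x\|_X$. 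It therefore suffices to prove that there is a constant $c\geq 0$ with
\begin{equation*}
\|x\|_Z \m\leq\m \|\GothA w\| + c\|w\| \FORALL w=[y\ x\ u]^\top\in\Dscr(\GothA).
\end{equation*}
Then $k_3=k_1c+k_2$ works.

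To prove this, fix a real $\beta>0$ in $\rho(A)$ and use it in \rfb{Chauvin_convicted}. Let $N=\|\GothA w\|$. By Proposition \ref{LaxPhillipsGenTh}, $g=Ax+Bu(0)\in X$ is the middle component of $\GothA w$, and $u'$ is the last component, so $\|g\|^2+\|u'\|^2\leq N^2$. Decompose
\begin{equation*}
x \m=\m (\beta I-A)^{-1}(\beta x-g) + (\beta I-A)^{-1}Bu(0),
\end{equation*}
where the first term lies in $X_1$ with $\|\cdot\|_1$-norm equal to $\|\beta x-g\|$. By \rfb{Chauvin_convicted},
\begin{equation*}
\|x\|_Z^2 \m\leq\m \|\beta x-g\|^2+\|u(0)\|^2 .
\end{equation*}

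The key step, and the only delicate one, is to avoid losing anything in the coefficient of $N^2$. This is done by bounding the trace $u(0)$ only linearly in $N$. Since $u\in\Hscr^1((0,\infty);U)$,
\begin{equation*}
\|u(0)\|^2 \m=\m -2\int_0^\infty\langle u'(\xi),u(\xi)\rangle\dd\xi \m\leq\m 2\|u'\|\,\|u\| \m\leq\m 2N\|w\|.
\end{equation*}
Also,
\begin{equation*}
\|\beta x-g\|^2 \m\leq\m \|g\|^2+2\beta\|x\|\,\|g\|+\beta^2\|x\|^2 \m\leq\m \|g\|^2+2\beta N\|w\|+\beta^2\|w\|^2 .
\end{equation*}
Adding the two bounds gives
\begin{equation*}
\|x\|_Z^2 \m\leq\m \|g\|^2+2(\beta+1)N\|w\|+\beta^2\|w\|^2 \m\leq\m \bigl(N+(\beta+1)\|w\|\bigr)^2 .
\end{equation*}
Hence the claim holds with $c=\beta+1$, and $k_3=k_1(\beta+1)+k_2$.

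The obstacle one might fear is that controlling $\|u(0)\|^2$ by Young's inequality would add an $\varepsilon\|u'\|^2$ term and inflate the coefficient of $\|\GothA w\|$ above $k_1$. Keeping the estimate in squared form and absorbing the cross terms into $2cN\|w\|$ avoids this entirely.
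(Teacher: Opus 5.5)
Your proof is correct, and it differs from the paper's in two respects.

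\textbf{Choice of $\beta$.} The paper assumes ``for simplicity'' that $0\in\rho(A)$ and takes $\beta=0$ in \rfb{Chauvin_convicted}. The splitting $x=A^{-1}(Ax+Bu(0))-A^{-1}Bu(0)$ then gives $\|x\|_Z^2\leq\|Ax+Bu(0)\|^2+\|u(0)\|^2$ at once. You keep a general $\beta$ and push the extra term $\beta x$ into the $\|w\|$ part. This is a real gain. The hypothesis $k_1<1$ in \rfb{our_estimate} refers to one particular $Z$-norm, and these norms are only equivalent, not equal, so switching to $\beta=0$ is not entirely without loss of generality. Your bound $\|x\|_Z\leq\|\GothA w\|+(|\beta|+1)\|w\|$ has coefficient exactly $1$ whichever $\beta$ is used. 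You should take $\beta$ to be the one that defines $\|\cdot\|_Z$ in \rfb{our_estimate}. If that $\beta$ is $\leq 0$, your computation still works with $|\beta|$ in the cross term.

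\textbf{Trace estimate.} The paper uses $\|u(0)\|^2\leq\|u\|^2+\|u'\|^2=\|u\|_{\Hscr^1}^2$. It then absorbs $k_1^2\bigl(\|Ax+Bu(0)\|^2+\|u'\|^2\bigr)$ into $k_1^2\|\GothA w\|^2$, since these are two different, orthogonal components of $\GothA w$. You use $\|u(0)\|^2\leq 2\|u'\|\,\|u\|$ and complete the square. Both work.

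So the obstacle in your last paragraph is not real: adding a full $\|u'\|^2$ through a Young-type bound costs nothing. What does need care is keeping squares to the end, as in \rfb{Biden}. If you read the paper's final display literally, where the norms have already been split into a sum, you only get the coefficient $\sqrt{2}\,k_1$. Your completion of the square makes this ``easy to see'' step explicit.
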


{\em Proof}. \m For the sake of simplicity we assume that $0\in
\rho(A)$. By taking in \rfb{Chauvin_convicted} $\beta=0$, the norm 
on $Z$ becomes \vspace{-2mm}
$$ \|x\|_Z^2 \m=\m {\rm inf}\left\{\|A\varphi\|_X^2+\|v\|_U^2 \ 
   |\ x=\varphi - A^{-1}Bv \right\},\vspace{-2mm}$$
where $\varphi\in\Dscr(A)$, $v\in U$. For $x_1,x_2\in Z$, $x_1=
\varphi_1-A^{-1}Bv_1$, $x_2=\varphi_2-A^{-1}Bv_2$, $\varphi_1,
\varphi_2\in\Dscr(A)$, it easily follows 
that \vspace{-2mm}
$$ \|x_2-x_1\|_Z^2 \m\leq\m \|A(x_2-x_1)+B(v_2-v_1)\|_X^2 + 
   \|v_2-v_1\|_U^2 \m.$$
Therefore, from \rfb{our_estimate} we obtain that \vspace{-2mm}
$$ \|\Mscr x_2-\Mscr x_1\|^2 \m\leq\m k_1^2\|A(x_2-x_1)+B(v_2-v_1)
   \|_X^2 \vspace{-2mm}$$
$$ \quad\quad \quad\quad\quad \quad\quad\quad\quad+k_1^2\|v_2-v_1
   \|_U^2 + k_2^2\| x_2-x_1 \|_X^2 \m.$$
Let $u_1,u_2\in\Hscr^1((0,\infty);U)$ such that $u_1(0)=v_1$, 
$u_2(0)=v_2$. It is easy to see that  $\|v_2-v_1\|_U\leq\|u_2-u_1\|
_{\Hscr^1}$. Thus, \vspace{-2mm}
$$ \|\Mscr x_2 -\Mscr x_1\|^2 \m\leq\m k_1^2\|A(x_2-x_1)+B(u_2(0)-
   u_1(0)) \|_X^2 \vspace{-2mm}$$
\begin{equation} \label{Biden}
 \m\m\m\quad\quad \quad \ \ \ \quad\quad +k_1^2\|u_2-u_1
   \|_{\Hscr^1}^2+k_2^2\|x_2-x_1\|_X^2.
\end{equation}
For any $x_1,u_1$ as above there exist functions $y_1\in\Hscr^1
((-\infty,0);Y)$ such that $[y_1\ x_1\ u_1]^\top\in\Dscr(\GothA)$,
as can be understood from Theorem \ref{LaxPhillipsGenTh}. Similarly,
there exist $y_2\in\Hscr^1((-\infty,0);Y)$ such that $[y_2\ x_2\ 
u_2]^\top\in\Dscr(\GothA)$. Using such triples, we see that the 
left-hand side of \rfb{Biden} is the square of the left-hand
side of \rfb{Super_league}. It follows that \vspace{-2mm}
$$ \left\|\tilde{\Mscr}\sbm{y_2\\ x_2\\ u_2}\nm-\tilde{\Mscr}
   \sbm{y_1\\ x_1\\ u_1}\right\| \leq\m k_1\left\|\AB\sbm{ x_2-x_1\\
   u_2(0)-u_1(0)} \right\|_X \vspace{-3mm}$$
$$ \quad\quad \quad \ \ \ \quad\quad\quad\quad+k_1\|u_2-u_1
   \|_{\Hscr^1}+k_2\|x_2-x_1\|_X \m.$$
It is easy to see that the inequality \rfb{Super_league} follows.
$\square$\vspace{-1mm}

\begin{example}{\rm An example to illustrate Remark
\ref{single-valued M} is a delay line with the state space $X=L^2[0,1]$ and
with a {\it deadzone} perturbation. Consider the standard realization
of a delay line (see for instance \citep{Weiss10}), where $A=\frac{\dd
}{\dd \xi}$ with domain $\Dscr(A)=\left\{ x\in\Hscr^1(0,1)\m |\ x(1)=0
\right\}$, $B=\delta_1,\ \bar{C}=\delta^*_0$ and $D=0$. Define the 
deadzone function $d:\rline\rarrow\rline$ by $d(x)=0$ for $|x|\leq 1$,
$d(x)=x-1$ for $x>1$, $d(x)=x+1$ for $x<1$. The nonlinear operator
$\Mscr$ is defined by $(\Mscr x)(\xi)=m(\xi)d(x(\xi))$, where $m\in
L^2[0,1]$ such that $m\geq 0$ and $m\not\in L^{\infty}[0,1]$. We put
$\Dscr(\Mscr)=\Hscr^1(0,1)=Z$ (we could take a larger domain but it is
not needed). $\Mscr$ has no continuous extension to $X$ but it
satisfies the estimate \rfb{our_estimate} if $\|m\|_X$ is small
enough. \vspace{-2mm}

It is easy to see that $\Mscr$ is monotone. Then following Remark
\ref{single-valued M}, we can conclude that the nonlinear system
$\Sigma^\Mscr$ is well-posed and incrementally scattering passive.
}\end{example} \vspace{-1mm}

 \section{Conclusions} 

In this paper, we prove the well-posedness of a class of nonlinear
infinite dimensional systems, obtained as perturbations of scattering
passive linear systems. The problem is motivated from engineering
examples, for instance, the model of a wind turbine tower with a 
tuned mass damper in the nacelle, that we discuss in detail. Our main
result gives sufficient conditions for a nonlinear system described by
a differential inclusion \rfb{Boris} and an output equation
\rfb{Mirvis}, to be well-posed and incrementally scattering
passive. \vspace{-2mm}

\bibliographystyle{plain}        

\end{document}